\newtheorem{definition}{Definition}[section]
\newtheorem{theorem}{Theorem}[section]
\newtheorem{prop}{Proposition}[section]
\newtheorem{lemma}{Lemma}[section]
\newtheorem{corollary}{Corollary}[section]
\def\rr{\mathbb{R}}
\def\ss{\mathbb{S}}
\def\tr{\mathrm{tr}}
\def\o{\omega}
\def\p{\partial}
\def\a{\alpha}
\def\p{\partial}
\def\S{{\Sigma}}
\def\<{\langle}
\def\>{\rangle}
\def\div{{\rm div}}
\def\n{\nabla}
\def\ep{\epsilon}
\numberwithin{equation} {section}
\begin{document}
	
\title[Stable anisotropic capillary hypersurfaces in a Half-Space]{Stable anisotropic capillary hypersurfaces\\ in a Half-Space}
\author{Jinyu Guo}
\address{Department of Mathematical Sciences, Tsinghua University, Beijing, 100084, China}
\email{guojinyu@tsinghua.edu.cn}
\author{Chao Xia}
\address{School of Mathematical Sciences\\
Xiamen University\\
361005, Xiamen, P.R. China}
\email{chaoxia@xmu.edu.cn}
\thanks{JG is supported by Shuimu Tsinghua Scholar Program (No.2022SM046) and China Postdoctoral Science Foundation (No.2022M720079). CX is supported by the NSFC (Grant No.11871406, 12271449).}
	
	\begin{abstract}
		In this paper, we study stability problem of anisotropic capillary hypersurfaces in an Euclidean half-space. We prove that any compact immersed anisotropic capillary constant anisotropic mean curvature hypersurface in the half-space is weakly stable if and only if it is a truncated Wulff shape. On the other hand, we prove a Bernstein-type theorem for stable anisotropic capillary minimal surfaces in the three dimensional half-space under Euclidean area growth assumption.
\end{abstract}
	
	\date{}
	\keywords{anisotropic capillary hypersurfaces, Wulff shape, stability, Bernstein's theorem}
	
	\maketitle
	
	\medskip
	
	
\section{Introduction}


Capillary phenomena appear in the study of the equilibrium shape of liquid drops and crystals
in a given solid container. The mathematical model has been established through the work
of Young, Laplace, Gauss and others, as a variational problem on minimizing a free energy
functional under a volume constraint. For more detailed description on
the isotropic and anisotropic capillary phenomena, we refer to \cite{Finn} and \cite{DPM}.

A capillary hypersurface in a domain $B$ of a Riemannian manifold is an immersed  constant mean curvature (CMC) hypersurface in $B$ which intersects $\partial B$ at a constant contact angle. Capillary hypersurfaces are the stationary points of an energy functional under volume preserving variation. As a special and important case,  free boundary CMC hypersurfaces are the stationary points of an area functional under volume preserving variation. A capillary hypersurface is called weakly stable if it is local energy-minimizing under volume preserving variation. The stability of free boundary CMC or capillary hypersurfaces was initiated by Ros-Vergasta \cite{RV} and Ros-Souam \cite{RS}. The classification problem for stable compact free boundary CMC or capillary hypersurfaces in an Euclidean ball and in an Euclidean half-space $\rr^{n+1}_+$ have been studied intensively, see for instance \cite{ M1,Nu,Ba, AS, Souam2, LiXiong2}. The classification has been completed eventually by Wang and the second-named author \cite{WX} for the Euclidean ball case and Souam \cite{Souam3} for the Euclidean half-space case respectively. This generalizes the classical result of Barbosa-do Carmo \cite{BdC} on the classfication of stable closed CMC hypersurfaces in $\rr^{n+1}$.  See also recent work \cite{GWX3, XZ}.

A modern formulation of Gauss' model of capillary phenomena includes a possibly anisotropic surface tension density, which we are interested in this paper.

Let $F: \mathbb{S}^n \rightarrow \mathbb{R}_{+}$ be a positive smooth function such that the matrix $(D^{2}F+F {\rm Id})$ is positive definite,
where $D^{2}F$ is the Hessian of $F$ and ${\rm Id}$ denotes the identity matrix. $F$ determines a unique strictly convex hypersurface $\mathcal{W}_F$ in $\rr^{n+1}$, which is called the Wulff shape.
For a closed hypersurface $\S$ immersed in $\rr^{n+1}$, the anisotropic area functional is given by
$$\mathcal{A}_F(\S)=\int_\S F(\nu) dA.$$ The well-known Wulff theorem (see for example \cite{FM, Ta2}) says that the  Wulff shape (up to translation and homothety) is the global minimizer to the anisotropic area functional under fixed volume constraint. From variational point of view, the stationary points for the anisotropic area functional under volume-preserving variations are closed hypersurfaces with constant anisotropic mean curvature (CAMC). Palmer \cite{Pa} (see also Winklmann \cite{Wink}) proved that Wulff shape  (up to translation and homothety) is the only stable CAMC hypersurfaces, which is the anisotropic counterpart of Barbosa-do Carmo's \cite{BdC} result. For more rigidity results on closed CAMC hypersurfaces and related problems, we refer to \cite{GM, HL1, HLMG, KP00, KP4}.

For a compact, orientable hypersurface $\S$ immersed in some container $B\subset\mathbb{R}^{n+1}$ with boundary $\p\S$, which intersects $\p B$ transversely, the anisotropic energy functional is given by
$$\mathcal{E}_F(\S)=\mathcal{A}_F(\S)+\o_0\mathcal{A}_W(\S),$$
where $\o_0\in \rr$ is a real number,  $\mathcal{A}_W(\S)$ is so-called wetting area.
We remark that throughout this paper, $\S$ will be always orientable.

The global minimizer of $\mathcal{E}_F$ under fixed volume constraint has been characterized by Winterbottom \cite{Wint} (see also \cite{KP2}) as a truncated Wulff
shape (it is also called a Winterbottom shape), which can be viewed as the capillary counterpart of Wulff shape.
For anisotropic free energy functionals involving a gravitational potential energy term, the existence,
the regularity and boundary regularity for global or local minimizers have been studied by De
Giorgi \cite{DG}, Almgren \cite{Alm}, Taylor \cite{Ta} and De Philippis-Maggi \cite{DPM, DPM2}. For the symmetry and uniqueness of global minimizers, we refer to the work of Baer \cite{Bae} for a class of $F$ with certain symmetry and the work of Gonzalez \cite{Gon} in the isotropic case via
a symmetrization technique.

From variational point of view, the stationary points of $\mathcal{E}_F$ under fixed volume constraint are the anisotropic capillary CAMC hypersurfaces, which are CAMC and satisfy an anisotropic capillary condition (see \eqref{angle} below). In a series of papers \cite{ KP2, KP1, KP}, Koiso-Palmer studied the anisotropic capillary hypersurfaces in a slab (the domain bounded by two parallel hyperplane) and their stabilities.
In Koiso-Palmer's paper, the second variation of $\mathcal{E}_F$ is derived for a class of anisotropy satisfying certain symmetric condition in two dimensions.
In this paper, we first compute  the second variation of $\mathcal{E}_F$ for any anisotropic $F$ in any dimensions. Hence we give a clear characterization for stability.
\begin{prop} An anisotropic capillary CAMC immersion $x: \Sigma\rightarrow B\subset\rr^{n+1}$ is weakly stable if and only if
\begin{eqnarray}\label{weakly-stab-ineq}
\quad-\int_\Sigma(\div_{\Sigma}(A_{F}\nabla f)+\langle A_{F}\circ d\nu,d\nu\rangle f)f\,dA+\int_{\p \Sigma} \left(\langle A_{F}\nabla f,\mu\rangle-q_{F}f\right)f\,ds\ge 0,
\end{eqnarray}
for any $f\in C^\infty(\S)$ satisfying $\int_{\Sigma} f\, dA=0$. Here $q_F$ is given in \eqref{qF} below.
\end{prop}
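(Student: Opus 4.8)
The plan is to derive the second variation formula for $\mathcal{E}_F$ under admissible variations and then read off \eqref{weakly-stab-ineq} via the Lagrange-multiplier reduction to the volume constraint, following the scheme developed by Ros--Souam \cite{RS} and Palmer \cite{Pa} in the isotropic and closed anisotropic settings. Fix the anisotropic capillary CAMC immersion $x:\Sigma\to B$ and consider a smooth variation $x:(-\varepsilon,\varepsilon)\times\Sigma\to B$ with $x(0,\cdot)=x$ and $x(t,\partial\Sigma)\subset\partial B$ for all $t$; write $Y=\partial_t x|_{t=0}$ and $f=\langle Y,\nu\rangle\in C^\infty(\Sigma)$. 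A preliminary observation is that, prescribing any $f\in C^\infty(\Sigma)$, one can always construct such an admissible variation with $\langle Y,\nu\rangle=f$, and that the first-order change of the enclosed volume is $\int_\Sigma f\,dA$ (up to sign); hence volume-preserving variations are exactly those with $\int_\Sigma f\,dA=0$.

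First I would record the first and second variations of the two constituents of $\mathcal{E}_F$. For the anisotropic area one gets, after integration by parts against the conormal $\mu$ of $\partial\Sigma$ in $\Sigma$, an interior term governed by the anisotropic Jacobi operator $L_F f:=\mathrm{div}_\Sigma(A_F\nabla f)+\langle A_F\circ d\nu,d\nu\rangle f$ together with a boundary term; note that $L_F$ contains no $H_F$-term, which will be relevant below. For the wetting area $\mathcal{A}_W$ the first variation is a pure boundary integral over $\partial\Sigma$, and its second variation contributes the second fundamental form of $\partial B$, weighted by $\omega_0$. The anisotropic capillary condition \eqref{angle}, which couples the conormals of $\partial\Sigma$ in $\Sigma$ and in $\partial B$ through $\omega_0$ and $F$, is then used to combine the two boundary contributions: after substitution the total boundary term assembles into $\int_{\partial\Sigma}\big(\langle A_F\nabla f,\mu\rangle-q_F f\big)f\,ds$ with $q_F$ exactly as in \eqref{qF}.

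Next I would bring in the volume constraint. Since $x$ is a critical point of $\mathcal{E}_F$ under the volume constraint, the Lagrange-multiplier principle gives a constant $\Lambda$ (which turns out to be the constant anisotropic mean curvature $H_F$, up to normalization) with $\delta(\mathcal{E}_F-\Lambda V)\equiv 0$ on admissible fields; the object whose sign governs weak stability is then the $\Lambda$-corrected second variation $\delta^2(\mathcal{E}_F-\Lambda V)$ restricted to $\ker(\delta V)=\{f\in C^\infty(\Sigma):\int_\Sigma f\,dA=0\}$. The subtraction of $\Lambda V$ is precisely what removes the would-be $H_F$-dependent interior term and renders the quadratic form independent of the tangential part of $Y$ and of the chosen extension, so that $\delta^2(\mathcal{E}_F-\Lambda V)(Y,Y)$ equals $Q(f,f):=-\int_\Sigma L_F f\,f\,dA+\int_{\partial\Sigma}(\langle A_F\nabla f,\mu\rangle-q_F f)f\,ds$, the left-hand side of \eqref{weakly-stab-ineq}. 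Combining this identity with the correspondence between admissible volume-preserving variations and functions $f$ with $\int_\Sigma f\,dA=0$ yields the asserted equivalence.

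The step I expect to be the main obstacle is the careful bookkeeping of the boundary terms: tracking how the forced sliding of $\partial\Sigma$ along $\partial B$ interacts with the anisotropic weight $F(\nu)$ and with $\omega_0$, and verifying that the conormal term coming from $\mathcal{A}_F$, the term coming from $\mathcal{A}_W$, and the derivative of the unit normal along $\partial B$ recombine exactly into $-q_F f$ with $q_F$ matching \eqref{qF}. A secondary technical point is to justify rigorously that, at an anisotropic capillary CAMC immersion, $\delta^2(\mathcal{E}_F-\Lambda V)$ is a genuine function of $f$ alone; this relies on $\delta(\mathcal{E}_F-\Lambda V)$ vanishing identically on all admissible fields, not merely on volume-preserving ones, which is why one works with $\mathcal{E}_F-\Lambda V$ from the outset.
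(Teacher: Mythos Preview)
Your proposal is correct and essentially matches the paper's argument: derive the second variation of $\mathcal{E}_F$ (the paper does this in its appendix, with the careful boundary bookkeeping producing exactly the $q_F$ of \eqref{qF}), then invoke the Ainouz--Souam result \cite{AS} to identify volume-preserving admissible variations with functions $f$ satisfying $\int_\Sigma f\,dA=0$. The only cosmetic difference is that you dispose of the term $H_F\bigl(\int_\Sigma f\,dA\bigr)'$ via the Lagrange-multiplier functional $\mathcal{E}_F-\Lambda V$, whereas the paper works directly on volume-preserving variations and uses $\mathcal{V}''(0)=0$ for that purpose; these are equivalent, and the heavy lifting (the boundary terms) is identical either way.
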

For some notations involved in the above stability inequality \eqref{weakly-stab-ineq}, we refer to Section \ref{sec3}.

Recently, Koiso \cite{Koiso19, Koiso1} studied the stability problem of anisotropic capillary CAMC hypersurfaces in a wedge. In particular, she proved that a compact stable immersed anisotropic capillary CAMC hypersurface $\S$ in $\mathbb{R}^{n+1}_{+}$ with boundary $\p\S$ must be a truncated Wulff shape, provided $\p\S\subset \rr^n$ is embedded for $n=2$ and is convex for $n\geq3$. This extends the result of Choe-Koiso \cite{CK} in the isotropic case.
As we mentioned, in the isotropic case, Souam \cite{Souam3} classified stable capillary hypersurfaces in $\mathbb{R}^{n+1}_{+}$ without any additional assumption.
It is natural to ask whether the embeddedness condition for $n=2$ and the convexity condition for $n\geq3$ in Koiso's result \cite{Koiso19, Koiso1} can be removed for the classification.  In this paper we will give an affirmative answer and our main result is the following.
	\begin{theorem}[{\bf Theorem \ref{thmm4.1}}]\label{thm0.1}
	A compact, immersed  anisotropic capillary CAMC hypersurface in $\mathbb{R}^{n+1}_{+}$ is weakly stable if and only if it is a truncated Wulff shape, up to translation and homothety.
	\end{theorem}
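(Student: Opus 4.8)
The plan is to adapt the Barbosa--do Carmo/Ros stability scheme --- used by Palmer \cite{Pa} for closed CAMC hypersurfaces and by Souam \cite{Souam3} in the isotropic capillary half-space --- to the anisotropic capillary setting. A truncated Wulff shape is weakly stable (by Winterbottom's characterization together with Koiso's second variation computation), so only the converse needs proof. Writing $Q(f)$ for the left-hand side of the weak stability inequality \eqref{weakly-stab-ineq}, I would feed a balanced $(n+1)$--parameter family of linear test functions into \eqref{weakly-stab-ineq}, add the resulting inequalities, and show via an anisotropic Minkowski-type identity that $\sum Q(f_j)\le 0$; weak stability then forces $Q(f_j)=0$ for each $j$, and the rigidity analysis of this equality case shows $\S$ is totally $F$--umbilical, hence (up to translation and homothety) a piece of the Wulff shape $\mathcal W_F$, which the anisotropic capillary condition \eqref{angle} closes up into a truncated Wulff shape.

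In more detail: work in $\rr^{n+1}_+=\{x_{n+1}>0\}$ with wall $P=\{x_{n+1}=0\}\supset\p\S$, and let $\Phi=\Phi_F\circ\nu$ be the Cahn--Hoffman (anisotropic Gauss) map of $\S$, so that $\div_\S\Phi=H_F$ and, on a CAMC hypersurface, $\div_\S(A_F\n\langle x,E\rangle)=-H_F\langle\Phi,E\rangle$ for every constant $E\in\rr^{n+1}$; thus, writing $\mathcal L_F f=\div_\S(A_F\n f)+\langle A_F\circ d\nu,d\nu\rangle f$ for the operator in \eqref{weakly-stab-ineq}, one gets $\mathcal L_F\langle x,E\rangle=\langle A_F\circ d\nu,d\nu\rangle\langle x,E\rangle-H_F\langle\Phi,E\rangle$. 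Since a translation of $\rr^{n+1}_+$ parallel to $P$ preserves the domain, the CAMC equation, the capillary condition and the enclosed volume, we may assume after such a translation that $\int_\S\langle x,e_i\rangle\,dA=0$ for $i=1,\dots,n$; then $f_i:=\langle x,e_i\rangle$ for $i\le n$, together with $f_{n+1}:=\langle x,e_{n+1}\rangle-c$ where $c$ is the average of $\langle x,e_{n+1}\rangle$ over $\S$, are admissible test functions in \eqref{weakly-stab-ineq}. Summing $-\int_\S(\mathcal L_Ff_j)f_j\,dA$ over $j=1,\dots,n+1$, using the identities above and the anisotropic Minkowski formula relating $\int_\S nF(\nu)\,dA$, $H_F\int_\S\langle x,\Phi\rangle\,dA$ and a boundary integral, the interior part takes (after recentering $y:=x-ce_{n+1}$) the form $\int_\S\big(-\langle A_F\circ d\nu,d\nu\rangle\,|y|^2+H_F\langle\Phi,y\rangle\big)\,dA$; the pointwise anisotropic umbilicity inequality $\langle A_F\circ d\nu,d\nu\rangle\ge H_F^2/n$ --- equality exactly at $F$--umbilical points --- together with the boundary contributions is what should make the total $\le 0$.

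The crux, and the point that goes beyond Koiso \cite{Koiso19,Koiso1}, is the boundary part: one must show that $\sum_j\int_{\p\S}\big(\langle A_F\n f_j,\mu\rangle-q_Ff_j\big)f_j\,ds$ contributes with the correct sign \emph{with no assumption on $\p\S$}. Along $\p\S\subset P$ one has $\langle x,e_{n+1}\rangle\equiv 0$, and the capillary condition \eqref{angle} prescribes the anisotropic angle between the conormal $\mu$ and $-e_{n+1}$; I would expand $\langle A_F\n\langle x,E\rangle,\mu\rangle=\langle A_F E^{\top},\mu\rangle$ (with $E^{\top}$ the part of $E$ tangent to $\S$), substitute the explicit formula \eqref{qF} for $q_F$, and sum over $j$, expecting the capillary relation to collapse the boundary terms into an integral over $\p\S$ that is sign-definite by the positive definiteness of $D^2F+F\,{\rm Id}$ (convexity of the Wulff shape) alone --- thereby removing the embeddedness ($n=2$) and convexity ($n\ge3$) hypotheses on $\p\S$ that appear in \cite{Koiso19,Koiso1}. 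I expect this boundary computation, and the identification of its equality case, to be the main obstacle. Granting it, $\sum_j Q(f_j)\le 0$ and weak stability $Q(f_j)\ge0$ give $Q(f_j)=0$ for all $j$; the interior equality forces $A_F\circ d\nu=\tfrac{H_F}{n}\,{\rm Id}$, so $\S$ lies on a translate of $\tfrac{n}{H_F}\mathcal W_F$, and the boundary equality fixes its position relative to $P$, making $\S$ a truncated Wulff shape. Finally $H_F\ne 0$: a compact anisotropic minimal hypersurface with $\p\S\subset P$ would, at a point of $\S$ farthest from $P$, violate the comparison principle against the tangent hyperplane (a translate of $P$) unless $\S\subset P$, which is impossible.
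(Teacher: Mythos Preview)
Your approach is genuinely different from the paper's, and as written it contains a real gap together with a false inequality.

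The paper does \emph{not} use the coordinate functions $\langle x,e_j\rangle$ at all. Instead it takes the single test function
\[
\varphi=n\bigl(F(\nu)+\omega_0\langle E_{n+1}^F,\nu\rangle\bigr)-H_F\langle x,\nu\rangle,
\]
which is tailor-made so that three things happen simultaneously: (i) $\int_\Sigma\varphi\,dA=0$ by the anisotropic Minkowski formula \eqref{aniso-Mink}; (ii) $J_F\varphi=n\,\tr(h_F^2)-H_F^2$; and, crucially, (iii) $\varphi$ satisfies the Robin boundary condition $\langle A_F\nabla\varphi,\mu\rangle=q_F\varphi$ on $\partial\Sigma$, so the boundary integral in \eqref{weakly-stab-ineq} vanishes identically. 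One more integration by parts against $\langle x,\nu\rangle$ (which satisfies the same Robin condition) then yields
\[
\int_\Sigma\bigl(F(\nu)+\omega_0\langle E_{n+1}^F,\nu\rangle\bigr)\bigl(n\,\tr(h_F^2)-H_F^2\bigr)\,dA\le 0,
\]
and since the weight is strictly positive and $n\,\tr(h_F^2)\ge H_F^2$, anisotropic umbilicity follows. No boundary computation beyond checking (iii) is needed; this is precisely how the embeddedness/convexity hypotheses of Koiso are removed.

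In your scheme, by contrast, the interior term that emerges is $\langle A_F\circ d\nu,d\nu\rangle=\tr(A_F h^2)$, not $\tr(h_F^2)=\tr\bigl((A_F h)^2\bigr)$. Your claimed pointwise inequality $\langle A_F\circ d\nu,d\nu\rangle\ge H_F^2/n$ is false for general $F$: Cauchy--Schwarz only gives $\tr(A_F h^2)\ge H_F^2/\tr(A_F)$, and $\tr(A_F)=\Delta_{\mathbb{S}^n}F+nF$ equals $n$ only in the isotropic case. Moreover the equality case of the correct Cauchy--Schwarz bound is \emph{isotropic} umbilicity $h=\lambda\,\mathrm{Id}$, not $F$-umbilicity, so even if the numerics closed you would not land on the Wulff shape. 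Finally, the boundary sum you hope will be sign-definite is exactly the obstacle you acknowledge; the individual $\langle x,e_j\rangle$ do not satisfy $\langle A_F\nabla f,\mu\rangle=q_F f$, and there is no evident mechanism (unlike the isotropic case) by which the anisotropy in $A_F$ and in $q_F$ cancels upon summation over $j$. The paper's choice of $\varphi$ is the idea that eliminates both difficulties at once.
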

As a special case, we have a classification for stable anisotropic free boundary CAMC hypersurfaces.
	\begin{corollary}\label{cor0.1}
	A compact, immersed anisotropic free boundary CAMC hypersurface in $\mathbb{R}^{n+1}_{+}$ is weakly stable if and only if it is a truncated Wulff shape, up to translation and homothety.
	\end{corollary}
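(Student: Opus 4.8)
Corollary \ref{cor0.1} is the special case $\omega_0=0$ of Theorem \ref{thm0.1}, so I describe a proof of the Theorem. The \emph{if} direction is the soft one: a truncated Wulff shape is, up to translation and homothety, the global minimizer of $\mathcal{E}_F$ under the enclosed-volume constraint (Winterbottom \cite{Wint}, cf.\ \cite{KP2}), hence weakly stable; alternatively one verifies \eqref{weakly-stab-ineq} on it directly, where $A_F\circ d\nu$ is a constant multiple of the identity and $q_F$ is constant, so that the quadratic form reduces to the assertion that a rotationally symmetric Robin eigenvalue problem is nonnegative on mean-zero functions, which follows by separation of variables.

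For the \emph{only if} direction let $x\colon\Sigma\to\rr^{n+1}_+$ be a weakly stable anisotropic capillary CAMC immersion and write $\bar\nu:=DF(\nu)+F(\nu)\nu$ for the Cahn-Hoffman (anisotropic Gauss) map, so that $d\bar\nu=(D^2F+F\,{\rm Id})\circ d\nu$ and the anisotropic shape operator $-d\bar\nu$ has trace $H_F$. One may assume $H_F\neq0$: a compact anisotropic capillary minimal hypersurface in $\rr^{n+1}_+$ cannot exist, because the anisotropic height function --- vanishing on $\p\Sigma$, positive in the interior, and satisfying $\div_\Sigma(A_F\nabla\langle x,E_{n+1}\rangle)=0$ --- would contradict the definite sign that \eqref{angle} forces on $\int_{\p\Sigma}\langle A_F\nabla\langle x,E_{n+1}\rangle,\mu\rangle\,ds$. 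Next recall the anisotropic umbilicity rigidity: $-d\bar\nu=c\,{\rm Id}$ for a constant $c$ forces $x(\Sigma)$ onto a single translated, rescaled Wulff shape, after which \eqref{angle} pins $\Sigma$ down as the truncated Wulff shape of angle $\omega_0$. So it suffices to deduce anisotropic umbilicity from \eqref{weakly-stab-ineq}.

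The test function should be a capillary adaptation of the Barbosa-do Carmo-Palmer function. Writing $\mathcal{L}f:=\div_\Sigma(A_F\nabla f)+\langle A_F\circ d\nu,d\nu\rangle f$, set $g:=nF(\nu)+H_F\langle x-x_0,\nu\rangle$ and choose the origin $x_0$, via the first anisotropic Minkowski-type identity (using $H_F\neq0$), so that $\int_\Sigma g\,dA=0$; if a residual boundary obstruction remains, correct $g$ by the null functions $\langle a,\bar\nu\rangle$, $a\in\rr^n$: these arise from the contact-angle-preserving translations along $\rr^n$, satisfy $\mathcal{L}\langle a,\bar\nu\rangle=0$ and $\langle A_F\nabla\langle a,\bar\nu\rangle,\mu\rangle-q_F\langle a,\bar\nu\rangle=0$ on $\p\Sigma$, and thus lie in the null space of the bilinear form in \eqref{weakly-stab-ineq}. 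Call the resulting admissible test function $f$.

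Feeding $f$ into \eqref{weakly-stab-ineq}, computing $\mathcal{L}f$ from the structural identities, integrating by parts, and applying the first and second anisotropic Minkowski-Hsiung identities together with the capillary condition \eqref{angle} to the boundary integral, the quadratic form should collapse to
\[
0\;\le\;-\,c_n\int_\Sigma \mathcal{D}_F\,dA\;\le\;0 ,
\]
with $c_n>0$, where $\mathcal{D}_F\ge0$ is the squared norm of the trace-free part of $-d\bar\nu$ (an anisotropic Newton/Cauchy-Schwarz defect of the anisotropic principal curvatures) and vanishes pointwise exactly at anisotropically umbilic points. Weak stability then forces $\mathcal{D}_F\equiv0$, so $-d\bar\nu=c\,{\rm Id}$, and the rigidity quoted above identifies $\Sigma$ with a truncated Wulff shape; Corollary \ref{cor0.1} follows by taking $\omega_0=0$. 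The main obstacle is precisely the boundary analysis of the last paragraph: producing a \emph{single} admissible test function whose boundary term in \eqref{weakly-stab-ineq} is nonpositive \emph{without} any geometric hypothesis on $\p\Sigma$ --- this being exactly the embeddedness ($n=2$) and convexity ($n\ge3$) assumption of \cite{Koiso19,Koiso1} that one wants to remove --- which forces a careful use of the full symmetry group of the half-space (translations and rotations along $\rr^n$) and of the distinguished direction $E_{n+1}$ so that the anisotropic Minkowski-Hsiung boundary terms cancel the $q_F$-contribution exactly.
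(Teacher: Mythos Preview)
Your overall strategy---plug a Barbosa--do Carmo--Palmer type function into \eqref{weakly-stab-ineq} and force anisotropic umbilicity---matches the paper's. But you miss the paper's decisive observation, and the workaround you propose does not close the gap for general $\omega_0$.

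The paper does \emph{not} shift origin or correct by translation Jacobi fields. Its test function is
\[
\varphi = n\bigl(F(\nu)+\omega_0\langle E_{n+1}^F,\nu\rangle\bigr) - H_F\langle x,\nu\rangle,
\]
with $E_{n+1}^F$ the constant vector in \eqref{En+1F}. The point is that each of $F(\nu)+\omega_0\langle E_{n+1}^F,\nu\rangle$ and $\langle x,\nu\rangle$ satisfies the Robin condition $\langle A_F\nabla\,\cdot\,,\mu\rangle=q_F\cdot$ \emph{exactly} on $\partial\Sigma$ (Proposition~\ref{prop-boundary}), so the boundary integral in \eqref{weakly-stab-ineq} vanishes identically; and $\int_\Sigma\varphi\,dA=0$ holds automatically by the Minkowski formula \eqref{aniso-Mink}. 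A short Green's formula then yields $\int_\Sigma(F(\nu)+\omega_0\langle E_{n+1}^F,\nu\rangle)(n\,\tr h_F^2-H_F^2)\le 0$ with positive weight by \eqref{Fz}, and umbilicity follows.

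Your $g=nF(\nu)+H_F\langle x-x_0,\nu\rangle$ drops the $\omega_0\langle E_{n+1}^F,\nu\rangle$ term. A direct computation on $\partial\Sigma$ gives $\langle A_F\nabla F(\nu),\mu\rangle - q_F F(\nu)=-\omega_0 h_F(\mu,\mu)/\mu_{n+1}$, so for $\omega_0\neq 0$ your test function has a nonzero Robin defect. The horizontal-translation fields $\langle a,\nu\rangle$, $a\in\mathbb R^n$, themselves satisfy the Robin condition (they have zero defect), so adding them cannot cancel this pointwise obstruction; your proposed correction does not make the boundary term nonpositive. You correctly flag this boundary analysis as ``the main obstacle,'' but the resolution is not a symmetry argument---it is the specific vector $E_{n+1}^F$.

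For the Corollary itself ($\omega_0=0$) your function is, up to sign, the paper's $\varphi$, and the Robin defect above vanishes, so that case goes through; moreover the Minkowski identity already gives $\int_\Sigma\varphi=0$, so no origin shift is needed. Two smaller points: the sign in your $g$ should be $-H_F\langle x,\nu\rangle$ so that $J_F\varphi=n\,\tr h_F^2-H_F^2$ is the nonnegative defect; and your separate treatment of $H_F=0$ is unnecessary (and the asserted identity $\div_\Sigma(A_F\nabla\langle x,E_{n+1}\rangle)=0$ is not correct in the anisotropic setting)---in the paper's framework, $H_F=0$ is excluded directly since $\varphi>0$ would contradict $\int_\Sigma\varphi=0$.
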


The proof of Theorem \ref{thm0.1} is based on the stability inequality \eqref{weakly-stab-ineq} and the following Minkowski-type formula
\begin{equation}\label{aniso-Mink}
\int_{\Sigma} \left[n(F(\nu)+\omega_{0}\langle E^{F}_{n+1},\nu\rangle)-H_{F}\langle x,\nu\rangle\right] dA=0,
\end{equation}
where $E_{n+1}^F \in \mathbb{R}^{n+1}$ is a constant vector given in \eqref{En+1F} below.
 Formula \eqref{aniso-Mink} has been proved by Jia, Wang, Zhang and the second-named author  \cite[Theorem 1.3]{JWXZ}, which was used to prove an Alexandrov-type theorem for embedded anisotropic capillary hypersurfaces. It is an standard approach to apply Minkowski-type formula involving no boundary terms to handle the stability for free boundary or capillary problems, see for example \cite{ AS, GWX3, Souam3, WX}.

In the second part of this paper, we are interested in (strongly) stable anisotropic capillary minimal surfaces in $\mathbb{R}^{3}_{+}$. The second variational formula gives the following characterization of strong stability for anisotropic capillary minimal hypersurfaces in $B\subset\rr^{n+1}$.
\begin{prop} An anisotropic capillary minimal immersion $x: \Sigma\rightarrow B\subset\rr^{n+1}$ is (strongly) stable if and only if
\begin{eqnarray}\label{stab-ineq-x}
\quad-\int_\Sigma(\div_{\Sigma}(A_{F}\nabla f)+\langle A_{F}\circ d\nu,d\nu\rangle f)f\,dA+\int_{\p \Sigma} \left(\langle A_{F}\nabla f,\mu\rangle-q_{F}f\right)f\,ds\ge 0,
\end{eqnarray}
for any $f\in C_c^\infty(\S)$.
\end{prop}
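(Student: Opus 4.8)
The plan is to specialize the second variation computation of $\mathcal{E}_F$ that underlies the stability criterion \eqref{weakly-stab-ineq} to the minimal case $H_F\equiv 0$, keeping track of two simplifications: the vanishing of all terms proportional to $H_F$, and the disappearance of the volume constraint. Concretely, let $x_t\colon\Sigma\to B$ be a variation of $x=x_0$ through immersions whose boundary remains on $\p B$, with variation field $Y=\p_t x_t|_{t=0}$ of compact support, and write $f$ for the (normalized) normal component of $Y$ as in Section \ref{sec3}, so that $f\in C_c^\infty(\Sigma)$; conversely, using the constant contact angle, every such $f$ is realized by an admissible variation. The first variation formula for $\mathcal{E}_F$ has the form $\frac{d}{dt}\mathcal{E}_F(x_t)\big|_{t=0}=-\int_\Sigma H_F f\,dA$ plus a boundary integral that vanishes exactly under the anisotropic capillary angle condition \eqref{angle}; hence when $H_F\equiv 0$ and \eqref{angle} holds, \emph{every} admissible variation is critical, so there is neither a volume constraint nor a Lagrange multiplier. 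Strong stability therefore means precisely that $Q(f,f):=\frac{d^2}{dt^2}\mathcal{E}_F(x_t)\big|_{t=0}\ge 0$ for all $f\in C_c^\infty(\Sigma)$, with no mean-value restriction.

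It remains to identify $Q(f,f)$ with the quadratic form in \eqref{stab-ineq-x}. I would split $\mathcal{E}_F=\mathcal{A}_F+\omega_0\mathcal{A}_W$ and differentiate twice. The bulk contribution is the anisotropic Jacobi form: invoking the symmetry of $A_F\circ d\nu$ and the anisotropic structure (Codazzi-type) equations, the interior integrand reduces to $-\big(\div_\Sigma(A_F\nabla f)+\langle A_F\circ d\nu,d\nu\rangle f\big)f$, all $H_F$-terms dropping out. The second derivative of the wetting-area term $\omega_0\mathcal{A}_W$, together with the boundary terms produced when the interior part is integrated by parts, are then combined and simplified using \eqref{angle} --- which allows one to express the transverse derivative of $f$ along $\p\Sigma$ through the geometry of $\p B=\rr^n$, the contact angle $\omega_0$, and the Wulff shape --- yielding the boundary integrand $\big(\langle A_F\nabla f,\mu\rangle-q_F f\big)f$ with $q_F$ exactly the coefficient in \eqref{qF}. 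Because $\Sigma$ need not be compact (the case relevant to the Bernstein-type result), restricting to compactly supported $f$ is precisely what legitimizes each application of the divergence theorem; in the compact case $C_c^\infty(\Sigma)=C^\infty(\Sigma)$ and one recovers the obvious analogue of \eqref{weakly-stab-ineq} without the constraint $\int_\Sigma f\,dA=0$.

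The main obstacle is the boundary analysis: one must differentiate the constraint $x_t(\p\Sigma)\subset\p B$ to second order and use the anisotropic capillary condition carefully to see that the various boundary contributions --- from $\mathcal{A}_F$, from $\omega_0\mathcal{A}_W$, and from integrating the interior operator by parts --- assemble into the single expression $\langle A_F\nabla f,\mu\rangle f-q_F f^2$ with the precise weight $q_F$ of \eqref{qF}; this is the only place where the second fundamental form of $\p B$, the contact angle, and the anisotropy of $F$ all interact. By contrast, the interior computation is the direct anisotropic counterpart of the classical second variation of area, and the remaining ingredients --- discarding the $H_F$ terms, the absence of a volume constraint, and the passage from $C^\infty$ to $C_c^\infty$ on a possibly non-compact surface --- are routine.
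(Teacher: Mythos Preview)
Your proposal is correct and follows essentially the same route as the paper: compute $\mathcal{E}_F''(0)$, identify the interior Jacobi term and the boundary $q_F$ term, and observe that for $H_F\equiv 0$ there is no Lagrange multiplier, so stability is equivalent to nonnegativity of the quadratic form over all $f\in C_c^\infty(\Sigma)$.

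One organizational difference is worth flagging. You propose to split $\mathcal{E}_F=\mathcal{A}_F+\omega_0\mathcal{A}_W$ and differentiate each piece twice, then integrate the interior operator by parts and combine the resulting boundary terms with $\omega_0\mathcal{A}_W''$. The paper instead differentiates the \emph{first variation formula} $\mathcal{E}_F'(t)=\int_{\Sigma_t}H_F f\,dA+\int_{\partial\Sigma_t}\langle\mu_F+\omega_0\bar\nu,Y\rangle\,ds$ once more: the interior part comes directly from the evolution equation $H_F'=-J_F f$ (no integration by parts), and on the boundary one only has to compute $\langle Y',\mu_F+\omega_0\bar\nu\rangle+\langle Y,(\mu_F+\omega_0\bar\nu)'\rangle$, where the combination $\mu_F+\omega_0\bar\nu$ is already normal to $\partial B$ by the capillary condition. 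This packaging substantially shortens the boundary calculation, which---as you correctly anticipate---is the only delicate part.

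Two minor slips: the proposition is stated for a general container $B$, so $\partial B$ need not be $\rr^n$ (indeed $q_F$ involves $h^{\partial B}$); and the fact that every $f\in C_c^\infty(\Sigma)$ arises from an admissible variation uses transversality of $\Sigma$ with $\partial B$, not the contact-angle condition itself.
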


A classical Bernstein theorem, proved  Fischer-Colbrie-Schoen \cite{FS}, do Carmo-Peng \cite{DP} and Pogorelov \cite{Po} independently, says that the only complete stable minimal surfaces in $\mathbb{R}^{3}$ are flat. Quite recently, Chodosh-Li \cite{CL} resolved a well-known conjecture of Schoen \cite[Conjecture 2.12]{CM} that  the only complete stable minimal hypersurfaces in $\mathbb{R}^{4}$ are flat, see also Catino-Mastrolia-Roncoroni \cite{CMR} for another proof. Schoen-Simon-Yau \cite{SSY} have shown that  any complete  stable minimal hypersurfaces in $\mathbb{R}^{n+1}$ with $n+1\leq6$ with Euclidean area growth
must be flat.  Bernstein-type theorem for the anisotropic case in $\rr^3$ has been proved by White \cite{White} under the Euclidean area growth assumption, by Lin \cite{Lin} when  $F$ is $C^{2}$-close to $1$. Under similar assumptions, Bernstein-type theorem for the anisotropic case has been studied by Simon \cite{Simon}, Winklmann \cite{Wink1}, Chodosh-Li \cite{CL}. It is still open question whether these extra assumptions could be removed, even in $\rr^3$. We refer to Chodosh-Li's paper \cite{CL2} on recent progress for stable anisotropic minimal hypersurfaces.

Initiated by the min-max construction for capillary minimal surfaces, Bernstein-type theorem for capillary minimal surfaces in $\mathbb{R}^{3}_{+}$ has recently attracted much attentions.
In particular, Li-Zhou-Zhu \cite{LZZ} and De Masi-De Philippis \cite{DD}, independently, proved that any properly immersed stable capillary minimal surfaces in $\mathbb{R}^{3}_{+}$ with quadratic area growth must be a half-plane. By using Fischer-Colbrie-Schoen's technique, Hong-Saturnino 
\cite{HongS} proved the Bernstein-type theorem without Euclidean area growth assumption.

It is natural to consider the case of anisotropic capillary minimal surfaces. Here we prove the following Bernstein-type theorem for stable anisotropic capillary minimal surfaces in $\mathbb{R}_{+}^3$.
\begin{theorem}[{\bf Theorem \ref{thm-Bernstein-3}}]\label{thm-Bernstein} Let $\Sigma$ be an immersed anisotropic capillary minimal surface in $\mathbb{R}_{+}^3$. Assume that $\Sigma$ has  Euclidean area growth, that is, there exists some $C>0$ such that
\begin{equation}\label{vol-growth}
\operatorname{Area}\left(\Sigma \cap B_r(0)\right)<C r^2
\end{equation}
for any $r>0$. Then $\Sigma$ is stable if and only if $\Sigma$ is a half-plane.
\end{theorem}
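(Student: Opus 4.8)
\medskip

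The plan is to adapt the Fischer-Colbrie--Schoen / do Carmo--Peng / Pogorelov argument to the anisotropic capillary setting, following the strategy that was successful for the isotropic capillary case in \cite{LZZ, DD, HongS}. The ``if'' direction is immediate: a half-plane meeting $\partial \rr^3_+$ at the prescribed anisotropic angle is totally geodesic and a minimizer of $\mathcal{E}_F$, hence stable; the content is the ``only if'' direction. The key point is that the stability operator in \eqref{stab-ineq-x} is, after a suitable conformal change of the induced metric, a genuinely elliptic operator whose associated bilinear form controls the anisotropic analogue of the total curvature. More precisely, I would first rewrite the second variation: the bulk term $-\int_\Sigma(\div_\Sigma(A_F \nabla f) + \langle A_F\circ d\nu, d\nu\rangle f)f\,dA$ is, up to the divergence structure, of the form $\int_\Sigma \left( \langle A_F \nabla f,\nabla f\rangle - \langle A_F \circ d\nu, d\nu\rangle f^2\right)dA$, and $A_F$ is the positive-definite symmetrized second fundamental form associated to the anisotropy; on a surface, $\langle A_F \circ d\nu, d\nu\rangle$ plays the role of $|A|^2$ and, via the (anisotropic) Gauss equation, is comparable to $-K_\Sigma$ plus lower-order terms. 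This is exactly the mechanism exploited by White \cite{White} for the closed anisotropic Bernstein theorem under Euclidean area growth.

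\medskip

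The core of the argument proceeds as follows. Plugging in logarithmic cutoff test functions $f = \varphi \log(\ldots)$ adapted to geodesic balls, the Euclidean area growth hypothesis \eqref{vol-growth} lets one absorb the gradient term and conclude that the relevant total curvature integral is finite; this is where \eqref{vol-growth} is used essentially, exactly as in Colding--Minicozzi-style logarithmic cutoff estimates. Finiteness of total curvature then forces the surface to have, after compactification, finitely many ends each of which is either a graph or asymptotically planar; combined with stability and the capillary boundary condition one rules out all but the flat configurations. The boundary term $\int_{\p\Sigma}(\langle A_F \nabla f, \mu\rangle - q_F f)f\,ds$ has to be handled carefully: the sign of $q_F$ (the anisotropic Robin-type coefficient from \eqref{qF}) and the Robin boundary condition satisfied by the test functions must be arranged so that the boundary contribution has a favorable sign; this is the anisotropic capillary analogue of the reflection/doubling trick used in \cite{LZZ, DD}, where one reflects $\Sigma$ across $\partial \rr^3_+$ — but in the anisotropic setting the reflection is not isometric, so instead I would work directly with the capillary stability inequality and exploit the structure of the anisotropic conormal. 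An alternative, cleaner route would be to follow Hong--Saturnino \cite{HongS} and do a conformal-type change making the operator a Schr\"odinger operator on a surface with boundary, then run the Fischer-Colbrie--Schoen dichotomy (either $\Sigma$ is conformally the plane/half-plane and the Jacobi field analysis gives flatness, or it is conformally hyperbolic and one derives a contradiction with stability).

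\medskip

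The main obstacle I anticipate is the non-isometry of Euclidean reflection across $\partial \rr^3_+$ with respect to the anisotropic geometry: the standard capillary Bernstein proofs lean heavily on doubling $\Sigma$ to a complete surface without boundary, after which the classical (isotropic or anisotropic) Bernstein theorem applies. Here one cannot naively double, so the capillary boundary term must be controlled intrinsically. I expect the resolution to require a careful choice of the anisotropic conormal $\mu$ relative to $\nu$ and the outward normal of $\partial\rr^3_+$, together with the contact-angle relation, to show that along $\partial\Sigma$ the quantity $\langle A_F \nabla f,\mu\rangle - q_F f$ can be made nonpositive (or controlled by an interior term) for the chosen test functions — essentially an anisotropic Robin-monotonicity fact analogous to the one underlying \eqref{aniso-Mink}. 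Once the boundary term is tamed, the remaining steps (logarithmic cutoffs, finite total curvature, conformal type, Jacobi field rigidity) should go through as in the closed anisotropic case of White \cite{White} and the isotropic capillary case of \cite{LZZ, DD, HongS}, yielding that $\Sigma$ is totally geodesic, hence a half-plane.
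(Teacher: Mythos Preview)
Your proposal correctly identifies the boundary term as the obstacle, but the concrete mechanism for handling it is missing, and the suggested workarounds (doubling, conformal/Fischer--Colbrie--Schoen dichotomy, end analysis after finite total curvature) are either known not to apply in the anisotropic setting or are far more indirect than what is actually needed.

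The paper's proof hinges on one specific idea you do not have: the auxiliary function
\[
\psi = F(\nu) + \omega_0\langle E_{n+1}^F,\nu\rangle,
\]
the same quantity appearing in the Minkowski formula \eqref{aniso-Mink}. By Propositions \ref{prop4.66} and \ref{prop-boundary}, $\psi$ satisfies $J_F\psi = \tr h_F^2$ in $\Sigma$ and, crucially, the \emph{exact} Robin condition $\langle A_F\nabla\psi,\mu\rangle = q_F\psi$ on $\partial\Sigma$; moreover $\psi$ is uniformly bounded between positive constants by \eqref{Fz-nonc}. Inserting $\psi f$ (with $f\in C_c^\infty(\Sigma)$) into the stability inequality, the boundary contributions cancel \emph{identically}, and one is left with
\[
\int_\Sigma \psi\, f^2\, \tr h_F^2 \, dA \le \int_\Sigma \psi^2 \langle A_F\nabla f,\nabla f\rangle\, dA,
\]
hence $\int_\Sigma f^2 \tr h_F^2 \le C\int_\Sigma |\nabla f|^2$. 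A standard logarithmic cutoff together with the area growth hypothesis then gives $\tr h_F^2\equiv 0$ directly --- no finite-total-curvature step, no compactification of ends, no conformal-type argument is needed. This is the anisotropic analogue of the multiplicative trick in \cite{LZZ, DD} (in the isotropic case $\psi$ reduces to $1+\omega_0\langle E_{n+1},\nu\rangle$), and it is precisely what resolves the boundary problem you flagged; the vague ``anisotropic Robin-monotonicity'' you hoped for is in fact an exact identity once the correct $\psi$ is in hand.
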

In case $F\equiv1$, Theorem \ref{thm-Bernstein} reduces to \cite[Theorem 0.2]{LZZ} or \cite[Theorem 6.3]{DD}.

\

The remaining part of this paper is organized as follows. In Section \ref{sec2} we review some definitions and notations about anisotropic geometry. In Section \ref{sec3}
we calculate the first and the second variation formula of anisotropic energy functional in a general domain. In Section \ref{sec4} we present some useful geometric formulas for anisotropic capillary hypersurfaces in $\mathbb{R}^{n+1}_{+}$ and prove Theorem \ref{thm0.1}. In Section \ref{sec6} we discuss the stability of noncompact anisotropic capillary minimal surface in $\mathbb{R}^{3}_{+}$ and prove Theorem \ref{thm-Bernstein}.

\

\section{Notations and Preliminaries}\label{sec2}

Let $F: \mathbb{S}^n \rightarrow \mathbb{R}_{+}$ be a positive smooth function. Denote by $DF$ and $D^{2}F$ the gradient and Hessian of $F$ on $\mathbb{S}^{n}$. Then we require the matrix
 \begin{equation}\label{DFsur}
  A_{F}:=(D^{2}F+F {\rm Id})|_{x}>0\,\,\quad\text{for any }\,\,x\in\mathbb{S}^{n},
 \end{equation}
where ${\rm Id}$ denotes the identity on $T_{x}\mathbb{S}^{n}$ and $``> "$ means the matrix is positive definite. 

We define the map
\begin{eqnarray}\label{Phi-1}
\Phi: \mathbb{S}^{n}&\longrightarrow& \mathbb{R}^{n+1}
\\ x&\mapsto& F(x)x+DF(x)\nonumber
\end{eqnarray}
whose image $\mathcal{W}_F=\Phi(\mathbb{S}^{n})$ is a smooth strictly convex hypersurface in $\mathbb{R}^{n+1}$ called the Wulff shape.

We may regard $F$ as a convex function on $\rr^{n+1}$ by one-homogenous extension of $F$. Precisely, set $F(x)=|x|F(\frac{x}{|x|})$ when $x\neq 0$ and $F(0)=0$, the new $F:\rr^{n+1}\to \rr$ is a one-homogenous function on $\rr^{n+1}$ which is $C^2\in (\rr^{n+1}\setminus \{0\})$. We use $\bar \nabla$ to denote the Euclidean corvariant derivative of $F$.
It is standard to see that for $x\in \ss^n$ and $V,W\in T_x\ss^n$, we have \begin{eqnarray}
&&\bar \nabla F(x)=DF(x)+F(x)x= \Phi(x), \label{euc-sph-der1}\\
&&\bar \nabla^2 F(x)(V,W)= (D^2 F+F{\rm Id})(x)(V,W)=A_F(x)(V, W).\label{euc-sph-der2}
\end{eqnarray}

Let $B$ be a closed region in an $(n+1)$-dimensional Euclidean space $\mathbb{R}^{n+1}$ with smooth boundary $\p B$. Let $x:\Sigma \rightarrow B$ be an isometric immersion from a $n$-dimensional smooth manifold $\S$ such that $\partial \Sigma \subset \partial B$. We denote by $\bar \n$, $\bar \Delta$ and $\bar \n^2$ the gradient, the Laplacian and the Hessian on $\rr^{n+1}$ respectively, while by $\n$, $\Delta$ and $\n^2$ the gradient, the Laplacian and the Hessian on $\Sigma$ respectively.  Let $T(\p \S)$ and $N(\p\S)$ be the tangent bundle and the normal bundle of $\p \S$ as a co-dimensional two submainfolds in $\rr^{n+1}$. We will use the following terminology for four normal vector fields.
We choose one of the unit normal vector field along $x$ and denote it by $\nu$.
We denote  by $\bar N$ the unit outward normal to $\p B$ in $B$ and $\mu$ be the unit outward normal to $\p \Sigma$ in $\Sigma$.
Let $\bar \nu$ be the unit normal to $\p \Sigma$ in $\p B$ such that the bases $\{\nu, \mu\}$ and $\{\bar \nu, \bar N\}$ have the same orientation in $N(\p \Sigma)$.
Hence, in $N(\p\Sigma)$,  the following relations hold:
\begin{eqnarray}
&&\mu=-\langle\nu,\bar{N}\rangle\bar{\nu}+\langle\mu,\bar{N}\rangle\bar{N},\label{mu-0}
\\&&\nu=\langle\mu,\bar{N}\rangle\bar{\nu}+\langle\nu,\bar{N}\rangle\bar{N}.\label{nu-0}
\end{eqnarray}
Equivalently,
 \begin{eqnarray}
&&\bar{\nu}=-\langle\nu,\bar{N}\rangle\mu+\langle\mu,\bar{N}\rangle\nu,\label{nubar-0}\\
&&\bar{N}=\langle\mu,\bar{N}\rangle\mu+\langle\nu,\bar{N}\rangle\nu.\label{Nbar-0}
\end{eqnarray}
We always assume that $\S$ interesects $\p B$ transversally, so that $\langle\mu,\bar{N}\rangle\neq 0$.
For a vector field $Y$ on $\rr^{n+1}$, we denote $Y^{\S}$ and $Y^{\p\S}$ to be the tangential projection of $Y$ on $T\S$ and on $T(\p\S)$ respectively.


Denote by $h$ and $H$ the second fundamental form and the mean curvature of the immersion $x$ respectively. Precisely,
$h(X, Y)= \langle\bar \n_X \nu, Y\rangle$ for $X, Y\in T\S$ and $H=\tr_g(h).$ Denote by $h^{\p B}$ the second fundamental form of $\p B$ in $B$, that is, $h^{\p B}(X, Y)= \langle\bar \n_X \bar{N}, Y\rangle$ for $X, Y\in T(\p B)$.


Let $\nu_{F}$ be the anisotropic normal of $\Sigma$ given by
\begin{equation}\label{nuf}
\nu_F=\Phi(\nu):=DF(\nu)+F(\nu) \nu.
\end{equation}
Hence $DF(\nu)=\nu_{F}-\langle\nu_{F},\nu\rangle\nu =\nu_F^\S\in T\Sigma$. The anisotropic principal curvatures $\left\{\kappa_i^F\right\}_{i=1}^n$ of $\Sigma$ are given by the eigenvalues of the anisotropic Weingarten map
\begin{equation}\label{weingerten}
 d\nu_F=A_F(\nu) \circ d \nu: T \Sigma \rightarrow T \Sigma.
\end{equation}
The eigenvalues are real since $\left(A_F\right)$ is positive definite and symmetric. The anisotropic second fundamental form and anisotropic mean curvature are denoted respectively by
\begin{eqnarray}
&&h_{F}(X,Y)=\langle\bar{\nabla}_{X}\nu_F,Y\rangle=(A_{F}\circ h)(X,Y),\label{second-h}\\
&&H_{F}=\tr_{g}(d\nu_{F})=\tr_{g}(A_{F}(\nu)\circ d\nu)=\div_{\Sigma}(DF(\nu))+HF(\nu).\label{HF}
\end{eqnarray}
When $F\equiv1$, we see $A_F=Id_{\ss^n}$ and hence $h_{F}$ and $H_{F}$ are the usual second fundamental form $h$ and mean curvature $H$ respectively.

\

\section{The first and second variational formula}\label{sec3}
Let $x:\Sigma \rightarrow B$ be an isometric immersion such that $x(\partial \Sigma)=x(\Sigma)\cap\partial B$. By a compactly supported admissible variation of $x$, we mean a differentiable map $x: (-\epsilon, \epsilon)\times \Sigma\to B$ such that $x(t, \cdot): \Sigma\to B$ is an immersion satisfying $x(t, {\rm int} \Sigma)\subset{\rm int} B$, $x(t, \p \Sigma)\subset\p B$ and the support of $\frac{\p}{\p t}x(t, \cdot)$ is compact for every $t\in (-\ep, \ep)$ and $x(0, \cdot)=x$.

The anisotropic area functional $\mathcal{A}_{F}: (-\ep, \ep)\to\rr$ and the oriented volume functional $\mathcal{V}: (-\ep, \ep)\to\rr$ are given by
\begin{eqnarray*}
&&\mathcal{A}_F(t)=\int_{\tilde{\Sigma}} F(\nu)dA_{t},\\
&&\mathcal{V}(t)=\int_{[0,t]\times \tilde{\Sigma}} x(t,\cdot)^*dV,
\end{eqnarray*}
where $\tilde{\Sigma}\subseteq\Sigma$ is the support of the variation $\frac{\p}{\p t}x(t, \cdot)$, $dA_{t}$ is the area element of $\Sigma$ with respect to the metric induced by $x(t, \cdot)$ and $dV$ is the volume element in $B$. When $\S$ is compact, then $\tilde{\Sigma}=\Sigma$.
An admissible variation is said to be volume-preserving if $\mathcal{V}(t)=\mathcal{V}(0)=0$ for each $t\in (-\ep, \ep)$.

The wetting area functional $\mathcal{A}_{W}(t): (-\epsilon, \epsilon)\rightarrow \mathbb{R}$ are defined by
\begin{equation}\label{wetting-1}
    \mathcal{A}_{W}(t)=\int_{[0,t]\times(\partial\Sigma\cap\tilde{\Sigma})}x(t,\cdot)^{\ast}dA_{\partial B},
\end{equation}
where $dA_{\partial B}$ is the area element of $\partial B$.

Fix a real number $\omega_{0}\in \rr$. The anisotropic energy functional $\mathcal{E}_{F}(t): (-\ep, \ep)\to\rr$ is defined by
\begin{equation}\label{energy-1}
\mathcal{E}_F(t)=\mathcal{A}_F(t)+\omega_{0} \mathcal{A}_{W}(t).
\end{equation}

It is well known that the first variation formulas of $\mathcal{V}$ and $\mathcal{A}_{W}$ for a compactly supported admissible variation with variation vector field $Y=\frac{\p}{\p t}x(t,\cdot)|_{t=0}\in C_c^\infty(\S)$ such that $Y|_{\p\S}\in T(\p B)$ are given by
\begin{eqnarray}
&&\mathcal{V}'(0)=\int_\Sigma \langle Y, \nu\rangle dA,\label{var-V}\\
&&\mathcal{A}_W'(0)=\int_{\p\Sigma} \langle Y, \bar{\nu}\rangle ds,\label{var-AW}
\end{eqnarray} where $dA$ and $ds$ are the area element of $\Sigma$ and $\p \Sigma$ respectively.
Next we derive the first variational formula for $\mathcal{E}_F$.
\begin{prop}\label{first-var}Let $x(\cdot, t)$ be a compactly supported admissible variation with variational vector field $Y$. Then 
\begin{equation}\label{first-varformula}
 \mathcal{E}_F'(0)=\int_\Sigma H_{F}\langle Y, \nu\rangle dA+\int_{\p \Sigma} \langle Y, \mu_{F}+\omega_{0}\bar{\nu}\rangle ds,
\end{equation}
where
\begin{eqnarray}
&&\mu_{F}:=\<\nu_F,\nu\>\mu-\langle\nu_{F},\mu\rangle\nu=F(\nu)\mu-\langle\nu_{F},\mu\rangle\nu\in N(\p\S). \label{muF}
\end{eqnarray}
\end{prop}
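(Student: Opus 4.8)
The plan is to compute $\mathcal{E}_F'(0)$ by differentiating each of the three constituent functionals and then combining. First I would recall the classical anisotropic first variation of the area functional $\mathcal{A}_F$: writing $Y = \langle Y,\nu\rangle\nu + Y^\S$, one has
\begin{equation*}
\mathcal{A}_F'(0) = \int_\Sigma H_F\langle Y,\nu\rangle\, dA + \int_{\p\Sigma}\langle Y, \nu_F^{\S} \cdot(\text{something}) + F(\nu)\mu - \langle\nu_F,\mu\rangle\nu\rangle\, ds,
\end{equation*}
more precisely the boundary term should be reorganized so that it pairs $Y$ against a vector lying in the normal bundle $N(\p\Sigma)$. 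The cleanest route is: start from $\mathcal{A}_F(t) = \int_{\tilde\Sigma} F(\nu_t)\, dA_t$, use $\frac{d}{dt}dA_t = (\div_\Sigma Y)\,dA$ at $t=0$ together with the standard formula for the variation of the unit normal $\frac{d}{dt}\nu_t\big|_{t=0} = -\nabla\langle Y,\nu\rangle - h(Y^\S,\cdot)^\sharp$ (up to sign conventions), and substitute $DF(\nu) = \nu_F^\S \in T\Sigma$ from \eqref{nuf}. After expanding, the bulk terms reorganize into $\div_\Sigma$ of a tangential vector field plus the term $H_F\langle Y,\nu\rangle$ using the identity \eqref{HF}, namely $H_F = \div_\Sigma(DF(\nu)) + HF(\nu)$. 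Applying the divergence theorem on $\Sigma$ then produces the interior term $\int_\Sigma H_F\langle Y,\nu\rangle\, dA$ and a boundary integral over $\p\Sigma$.

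The key computational point is then to identify that boundary integral. It will come out as $\int_{\p\Sigma}\big(F(\nu)\langle Y,\mu\rangle - \langle DF(\nu),\mu\rangle\langle Y,\nu\rangle\big)\,ds$ or a similar expression; using $DF(\nu) = \nu_F - \langle\nu_F,\nu\rangle\nu$ and $\langle\nu_F,\nu\rangle = F(\nu)$ (which follows from \eqref{nuf} since $\langle DF(\nu),\nu\rangle = 0$ on $\ss^n$), one rewrites $\langle DF(\nu),\mu\rangle = \langle\nu_F,\mu\rangle$. Hence the boundary term equals $\int_{\p\Sigma}\langle Y,\, F(\nu)\mu - \langle\nu_F,\mu\rangle\nu\rangle\, ds = \int_{\p\Sigma}\langle Y,\mu_F\rangle\, ds$, which is exactly the definition \eqref{muF}. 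Here one must be slightly careful that $Y|_{\p\Sigma}\in T(\p B)$ is used only to ensure admissibility; the pairing $\langle Y,\mu_F\rangle$ is taken with the full $Y$, and since $\mu_F\in N(\p\Sigma)$ only the $N(\p\Sigma)$-component of $Y$ contributes anyway.

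Finally I would assemble the pieces: $\mathcal{E}_F'(0) = \mathcal{A}_F'(0) + \omega_0\mathcal{A}_W'(0)$, plug in the already-established formula \eqref{var-AW} for $\mathcal{A}_W'(0) = \int_{\p\Sigma}\langle Y,\bar\nu\rangle\, ds$, and combine the two boundary integrals into $\int_{\p\Sigma}\langle Y,\mu_F + \omega_0\bar\nu\rangle\, ds$, yielding \eqref{first-varformula}. The main obstacle is purely bookkeeping: getting the sign conventions for $\nu$, $\mu$, and the variation of the normal consistent with the orientation conventions fixed in Section \ref{sec2} (the relations \eqref{mu-0}–\eqref{Nbar-0}), and correctly splitting the variation vector field into normal and tangential parts when differentiating $F(\nu_t)$ — the tangential part of $Y$ contributes to the boundary term via the divergence theorem and must not be dropped. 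There is no deep difficulty here; this is the standard first-variation computation adapted to the anisotropic density $F(\nu)$, and the anisotropic structure only enters through the algebraic identities \eqref{euc-sph-der1}–\eqref{euc-sph-der2} relating $DF$, $A_F$, and $\Phi$.
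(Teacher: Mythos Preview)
Your proposal is correct and follows essentially the same approach as the paper's proof in the Appendix: differentiate $F(\nu_t)\,dA_t$, use $\nu' = S^\Sigma(Y^\Sigma)-\nabla f$ and $\div_\Sigma Y = \div_\Sigma Y^\Sigma + fH$, integrate by parts to produce the interior term $H_F f$ via \eqref{HF}, identify the boundary integrand as $F(\nu)\langle Y,\mu\rangle - \langle DF(\nu),\mu\rangle f = \langle Y,\mu_F\rangle$, and add $\omega_0\mathcal{A}_W'(0)$. The only minor point is that your stated formula for $\nu'$ has the wrong sign on the shape-operator term relative to the paper's convention (it is $+S^\Sigma(Y^\Sigma)$, not $-h(Y^\Sigma,\cdot)^\sharp$), but you already flagged this as a sign-convention issue and it does not affect the argument.
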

The first variational formula \eqref{first-varformula} is known to experts. For completeness, we will give a proof of Proposition \ref{first-var} in Appendix \ref{app}.

An interesting property for $\mu_F$ is as follows.
\begin{prop}Along $\p\S$, we have\begin{eqnarray}
\<\mu_F, \bar \nu\>&=&- \langle\nu_{F},\bar{N}\rangle,\label{xeq2}\\
\<\mu_F, \bar{N}\>&=&\langle\nu_{F},\bar \nu\rangle.\label{xeq3}
\end{eqnarray}
\end{prop}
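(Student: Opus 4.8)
The plan is to expand everything in the orthonormal basis $\{\bar\nu,\bar N\}$ of the normal bundle $N(\p\S)$ and use the defining formula \eqref{muF} for $\mu_F$ together with the already-established change-of-basis relations \eqref{mu-0}–\eqref{Nbar-0}. First I would rewrite $\mu_F = F(\nu)\mu - \langle\nu_F,\mu\rangle\nu$ and substitute the expressions \eqref{mu-0} and \eqref{nu-0} for $\mu$ and $\nu$ in terms of $\bar\nu$ and $\bar N$; this gives $\mu_F$ as an explicit linear combination of $\bar\nu$ and $\bar N$ whose coefficients involve $F(\nu)$, $\langle\nu_F,\mu\rangle$, $\langle\nu,\bar N\rangle$ and $\langle\mu,\bar N\rangle$. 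Taking inner products with $\bar\nu$ and $\bar N$ then reads off $\langle\mu_F,\bar\nu\rangle$ and $\langle\mu_F,\bar N\rangle$ directly.

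Next I would simplify the resulting coefficients. Using $F(\nu)=\langle\nu_F,\nu\rangle$ (which follows from \eqref{nuf}, since $\langle DF(\nu),\nu\rangle=0$ as $DF(\nu)\in T_\nu\ss^n$), one gets
\[
\langle\mu_F,\bar\nu\rangle = -\langle\nu_F,\nu\rangle\langle\nu,\bar N\rangle - \langle\nu_F,\mu\rangle\langle\mu,\bar N\rangle.
\]
The key observation is that, since $\nu_F \in N(\p\S)\oplus \mathbb{R}\mu \subset$ actually $\nu_F$ decomposes in the frame adapted to $\p\S$, one can expand $\nu_F$ itself in the orthonormal basis. It is cleaner to note directly that $\{\nu,\mu\}$ and $\{\bar\nu,\bar N\}$ are two orthonormal bases of the same $2$-plane $N(\p\S)$, so $\nu_F = \langle\nu_F,\nu\rangle\nu + \langle\nu_F,\mu\rangle\mu = \langle\nu_F,\bar\nu\rangle\bar\nu + \langle\nu_F,\bar N\rangle\bar N$; plugging \eqref{nubar-0} and \eqref{Nbar-0} into the first expansion and matching coefficients against the second gives the identities $\langle\nu_F,\bar N\rangle = \langle\nu_F,\nu\rangle\langle\nu,\bar N\rangle + \langle\nu_F,\mu\rangle\langle\mu,\bar N\rangle$ and $\langle\nu_F,\bar\nu\rangle = -\langle\nu_F,\nu\rangle\langle\mu,\bar N\rangle + \langle\nu_F,\mu\rangle\langle\nu,\bar N\rangle$ — wait, one should double-check signs here. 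Comparing with the computed coefficients of $\mu_F$ above then yields $\langle\mu_F,\bar\nu\rangle = -\langle\nu_F,\bar N\rangle$ and $\langle\mu_F,\bar N\rangle = \langle\nu_F,\bar\nu\rangle$, which are exactly \eqref{xeq2} and \eqref{xeq3}.

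Alternatively, and perhaps most transparently, one observes that $\mu_F$ is obtained from $\nu_F$ by the rotation by $\pi/2$ in the oriented $2$-plane $N(\p\S)$ that sends $\nu\mapsto\mu$ and $\mu\mapsto-\nu$: indeed \eqref{muF} says precisely $\mu_F = \langle\nu_F,\nu\rangle\mu - \langle\nu_F,\mu\rangle\nu = J\nu_F$ where $J$ is that rotation. Since $\{\nu,\mu\}$ and $\{\bar\nu,\bar N\}$ have the same orientation, $J$ also sends $\bar\nu\mapsto\bar N$ and $\bar N\mapsto-\bar\nu$, hence $\langle J\nu_F,\bar N\rangle = \langle\nu_F,J^{-1}\bar N\rangle = \langle\nu_F,\bar\nu\rangle$ and $\langle J\nu_F,\bar\nu\rangle = \langle\nu_F,J^{-1}\bar\nu\rangle = \langle\nu_F,-\bar N\rangle = -\langle\nu_F,\bar N\rangle$, using that $J$ is an isometry and $J^{-1} = -J$ (equivalently $J^T = -J$). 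This immediately gives both identities.

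There is no real obstacle here; the only thing to be careful about is the bookkeeping of orientations and signs — specifically, making sure that the same orientation convention for $N(\p\S)$ is used for both bases (which is built into the paper's setup via \eqref{mu-0}–\eqref{Nbar-0}), so that the rotation $J$ acts consistently on both. Once that is fixed the proof is a two-line computation, so I would present the rotation argument and verify it against the explicit substitution as a sanity check.
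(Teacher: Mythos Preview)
Your approach is essentially the same as the paper's: direct substitution using the definition \eqref{muF} of $\mu_F$ together with the change-of-basis relations \eqref{nubar-0}--\eqref{Nbar-0}. The paper's proof is the one-line statement that the identities follow directly from these, and your expansion carries this out explicitly; the rotation interpretation is a pleasant conceptual gloss on the same computation.

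One point to tighten: in general $\nu_F\notin N(\p\S)$, since $DF(\nu)=\nu_F^\S\in T\S$ may have a nonzero component tangent to $\p\S$. Hence the equality $\nu_F=\langle\nu_F,\nu\rangle\nu+\langle\nu_F,\mu\rangle\mu$, and likewise the claim $\mu_F=J\nu_F$, are not literally correct. This does not damage the argument, because only inner products of $\nu_F$ with vectors in $N(\p\S)$ appear: simply expand $\bar N$ and $\bar\nu$ in the basis $\{\mu,\nu\}$ via \eqref{Nbar-0} and \eqref{nubar-0} and pair with $\nu_F$ (which gives your identities for $\langle\nu_F,\bar N\rangle$ and $\langle\nu_F,\bar\nu\rangle$ without assuming anything about where $\nu_F$ lives), or equivalently replace $\nu_F$ throughout by its orthogonal projection onto $N(\p\S)$, after which your rotation argument is exact.
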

\begin{proof}
It follows directly by the definition of $\mu_F$, $\nu_F$ and also the relations \eqref{nubar-0} and \eqref{Nbar-0}.
\end{proof}
\begin{prop}\label{Def-CAMC}
Stationary points of $\mathcal{E}_F$ under compactly supported volume preserving admissible variation are hypersurfaces with constant anisotropic mean curvature (CAMC)  satisfying the {\it anisotropic capillary} boundary condition
\begin{equation}\label{angle}
  \langle\nu_{F},\bar{N}\rangle=\omega_{0}\quad\text{along}\,\,\partial\Sigma.
\end{equation}
Similarly, stationary points of $\mathcal{E}_F$ under any compactly supported admissible variation are hypersurfaces with zero anisotropic mean curvature and satisfying \eqref{angle}.\end{prop}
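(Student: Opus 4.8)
The plan is to read off the Euler--Lagrange equations directly from the first variation formula \eqref{first-varformula} by testing it against carefully chosen admissible variation fields, together with the linear-algebra relations \eqref{muF} and \eqref{xeq2}--\eqref{xeq3} along $\partial\Sigma$.

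\emph{The unconstrained case.} Suppose $\mathcal{E}_F'(0)=0$ for every compactly supported admissible variation. First I would test with variations supported in ${\rm int}\,\Sigma$, i.e. with $Y\in C_c^\infty({\rm int}\,\Sigma)$; then the boundary integral in \eqref{first-varformula} vanishes and $\int_\Sigma H_F\langle Y,\nu\rangle\,dA=0$. Since $\langle Y,\nu\rangle$ may be prescribed to be an arbitrary function in $C_c^\infty({\rm int}\,\Sigma)$, the fundamental lemma of the calculus of variations forces $H_F\equiv 0$ on ${\rm int}\,\Sigma$, hence on $\Sigma$ by continuity. Substituting $H_F\equiv 0$ back into \eqref{first-varformula}, a general admissible $Y$ (so $Y|_{\partial\Sigma}\in T(\partial B)$) leaves $\int_{\partial\Sigma}\langle Y,\mu_F+\omega_0\bar\nu\rangle\,ds=0$. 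Now I decompose $T(\partial B)|_{\partial\Sigma}=T(\partial\Sigma)\oplus\mathbb{R}\bar\nu$; since $\mu_F\in N(\partial\Sigma)$ by its very definition \eqref{muF} and $\bar\nu\perp T(\partial\Sigma)$, the integrand only detects the $\bar\nu$-component of $Y$, and $\langle\mu_F+\omega_0\bar\nu,\bar\nu\rangle=\langle\mu_F,\bar\nu\rangle+\omega_0=\omega_0-\langle\nu_F,\bar N\rangle$ by \eqref{xeq2}. As $\langle Y,\bar\nu\rangle|_{\partial\Sigma}$ can be made an arbitrary element of $C_c^\infty(\partial\Sigma)$ (extend such a function to an admissible $Y$ using a collar of $\partial\Sigma$, noting $\bar\nu\in T(\partial B)$ keeps it admissible), I conclude $\langle\nu_F,\bar N\rangle=\omega_0$ along $\partial\Sigma$. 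The converse is immediate from \eqref{first-varformula}: if $H_F\equiv 0$ and \eqref{angle} holds, the interior term vanishes and the boundary term vanishes after writing $Y|_{\partial\Sigma}=Y^{\partial\Sigma}+\langle Y,\bar\nu\rangle\bar\nu$ and using $\mu_F\in N(\partial\Sigma)$ with \eqref{xeq2}.

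\emph{The volume-constrained case.} The only structural change is that \eqref{first-varformula} vanishes only along variations with $\mathcal{V}'(0)=\int_\Sigma\langle Y,\nu\rangle\,dA=0$. The standard device is to construct, for any $f\in C_c^\infty(\Sigma)$ with $\int_\Sigma f\,dA=0$, a volume-preserving admissible variation whose normal component equals $f$ and whose restriction to $\partial\Sigma$ still lies in $T(\partial B)$ (a Barbosa--do~Carmo--Eschenburg-type construction adapted to the capillary setting; cf. the references cited for the stability analysis). Testing first against such variations supported in ${\rm int}\,\Sigma$ gives $\int_\Sigma H_F f\,dA=0$ for all mean-zero $f\in C_c^\infty({\rm int}\,\Sigma)$, hence $H_F\equiv H_0$ is constant on $\Sigma$. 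For a general volume-preserving admissible variation the interior term is then $H_0\,\mathcal{V}'(0)=0$, and repeating the boundary analysis of the previous paragraph (realizing arbitrary boundary data after correcting by an interior-supported normal field to restore $\mathcal{V}'(0)=0$) yields \eqref{angle}. Conversely, if $H_F\equiv H_0$ and \eqref{angle} holds, then along any volume-preserving admissible variation the interior term equals $H_0\,\mathcal{V}'(0)=0$ and the boundary term vanishes by the same computation, with $H_0$ serving as the Lagrange multiplier.

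The main obstacle is the construction of compactly supported volume-preserving admissible variations with prescribed mean-zero normal speed that keep $x(t,\partial\Sigma)\subset\partial B$ for all $t$; this requires choosing the tangential component of the variation near $\partial\Sigma$ so that the flow remains admissible, and then correcting by an interior bump to annihilate the first variation of volume. Once this is in hand, the rest of the argument is the fundamental lemma in the interior plus the pointwise identities \eqref{muF} and \eqref{xeq2} along the boundary.
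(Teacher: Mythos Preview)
Your argument is correct and follows essentially the same route as the paper: both use the first variation formula \eqref{first-varformula}, test against interior-supported variations to obtain the interior equation, and then along $\partial\Sigma$ exploit the decomposition $Y=Y^{\partial\Sigma}+\langle Y,\bar\nu\rangle\bar\nu$ together with $\mu_F\in N(\partial\Sigma)$ and the identity \eqref{xeq2} to reduce the boundary condition to \eqref{angle}. The paper's proof is terser (it does not separate the constrained and unconstrained cases and passes directly from the vanishing boundary integral to the pointwise condition), while you spell out the fundamental-lemma steps and flag the need for the existence of volume-preserving admissible variations with prescribed mean-zero normal speed---which the paper records separately as Proposition~\ref{vol-pres-var}.
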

\begin{proof}
From the first variation formulas  \eqref{first-varformula} and \eqref{var-V},  it is clear $x$ has constant anisotropic mean curvature. Next we show \eqref{angle}. From the first variation formulas  \eqref{first-varformula} and admissible condition, we have
\begin{eqnarray}\label{capill}
\langle Y, \mu_{F}+\omega_{0}\bar{\nu}\rangle=0 \hbox{ for any }Y\in T({\p B}).
\end{eqnarray}
Note that $\mu_F+\o_0\bar \nu\in N(\p \S)$. Since any $Y\in T({\p B})$ can be split as $Y=\<Y,\bar \nu\>\bar \nu+Y^{\p\S}$, where $Y^{\p\S}\in T(\p\S)$,  we see that
\eqref{capill} is equivalent that \begin{eqnarray}\label{capill2}
\<\bar \nu, \mu_F\>=-\o_0 \quad\text{along}\,\,\partial\Sigma.
\end{eqnarray}
From \eqref{xeq2}, we see
\eqref{capill2} is equivalent to \eqref{angle}.
\end{proof}

\begin{definition}\label{aniostropic-stable}
An immersion $x: \Sigma\rightarrow B$ is said to be  {\it anisotropic capillary CAMC} if it has CAMC and satisfies boundary condition \eqref{angle}. In particular, $\Sigma$ is called anisotropic free boundary CAMC hypersurface if $\omega_{0}=0$.
\end{definition}
\begin{definition}\label{strong-aniostropic-stable}
An immersion $x: \Sigma\rightarrow B$ is said to be  {\it anisotropic capillary minimal} if $H_F=0$  and $\langle\nu_{F},\bar{N}\rangle=\omega_{0}$ along $\Sigma\cap\partial B$.
\end{definition}

For  a volume-preserving admissible variation $Y$, by splitting $Y=Y^{\S}+f\nu$, we see
\begin{equation}\label{wetting-pre}
  \int_{\Sigma}f \,dA=0.
\end{equation}
Conversely, we have the following fact.
\begin{prop}[{\rm \cite[Proposition 2.1]{AS}}]\label{vol-pres-var} Let $x: \Sigma\rightarrow B$ be a compact immersed hypersurface with $\partial\Sigma\subseteq\partial B$. Then for a given $f\in C^\infty(\Sigma)$ satisfying $\int_{\Sigma} f\,dA=0$, there exists a volume-preserving admissible variation of $x$ with variational vector field having $f\nu$ as its normal part.
\end{prop}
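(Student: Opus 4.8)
The plan is the classical two-step argument, adapted to the anisotropic capillary boundary condition (cf. \cite{BdC} and its free boundary analogues): first produce an admissible variation whose variational vector field has normal part $f\nu$, and then correct it by an auxiliary variation supported in ${\rm int}\,\Sigma$ so as to keep the oriented volume constant, via the implicit function theorem.

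\emph{Step 1 (an admissible variation with the prescribed normal part).} The field $f\nu$ itself need not be tangent to $\partial B$ along $\partial\Sigma$, so it cannot directly generate an admissible variation, and one must add a tangential correction. Since $\Sigma$ meets $\partial B$ transversally, $\langle\mu,\bar N\rangle\neq 0$ along $\partial\Sigma$; define $\phi:=-\dfrac{\langle\nu,\bar N\rangle}{\langle\mu,\bar N\rangle}\,f$ on $\partial\Sigma$, extend $\phi$ to a function in $C^\infty(\Sigma)$, extend $\mu$ to a tangent vector field $M$ on $\Sigma$, and set $Y:=f\nu+\phi M$. Then $\langle Y,\nu\rangle=f$, so the normal part of $Y$ is $f\nu$; moreover along $\partial\Sigma$ one has $Y=f\nu+\phi\mu$, whence $\langle Y,\bar N\rangle=f\langle\nu,\bar N\rangle+\phi\langle\mu,\bar N\rangle=0$, i.e. $Y|_{\partial\Sigma}\in T(\partial B)$. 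One then extends (the pushforward of) $Y$ to a vector field $\bar Y$ on a neighbourhood of $x(\Sigma)$ in $\mathbb{R}^{n+1}$ that is tangent to $\partial B$ along $\partial B$ and equals $Y$ along $x(\Sigma)$; the flow of $\bar Y$, restricted to $\Sigma$, is an admissible variation $x_1(t,\cdot)$ of $x$ with variational field $Y$.

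\emph{Step 2 (correcting the volume).} Fix $g\in C_c^\infty({\rm int}\,\Sigma)$ with $\int_\Sigma g\,dA=1$; since $g$ vanishes near $\partial\Sigma$, the variation $x_2(s,\cdot)$ generated by $g\nu$ is automatically admissible. Let $X(t,s,\cdot)$ denote the two-parameter variation obtained by composing the flows of $x_1$ and $x_2$, and let $\mathcal V(t,s)$ be its oriented volume. By \eqref{var-V} we have $\mathcal V(0,0)=0$, $\partial_t\mathcal V(0,0)=\int_\Sigma f\,dA=0$ and $\partial_s\mathcal V(0,0)=\int_\Sigma g\,dA=1\neq0$. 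By the implicit function theorem there is a smooth $s(t)$, defined for small $t$, with $s(0)=0$ and $\mathcal V(t,s(t))\equiv0$, and differentiating the identity gives $s'(0)=-\partial_t\mathcal V(0,0)/\partial_s\mathcal V(0,0)=0$. Hence $t\mapsto X(t,s(t),\cdot)$ is a volume-preserving admissible variation of $x$, and its variational field at $t=0$ equals $Y+s'(0)\,g\nu=Y$, whose normal part is $f\nu$, as required.

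The step requiring genuine care is Step 1: the compatibility of the prescribed normal datum $f\nu$ with the constraint that the variation keep $\partial\Sigma$ on $\partial B$. The tangential correction $\phi\mu$ along $\partial\Sigma$ is forced by transversality, and one must verify it extends into the interior and is realized by an ambient flow tangent to $\partial B$. Step 2 is then routine, and the hypothesis $\int_\Sigma f\,dA=0$ is used precisely to guarantee $s'(0)=0$, so that the volume correction does not perturb the first-order normal data.
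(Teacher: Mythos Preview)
The paper does not give its own proof of this proposition; it is quoted verbatim from \cite[Proposition~2.1]{AS} and used as a black box. Your two-step argument --- a tangential correction $\phi\mu$ along $\partial\Sigma$ (forced by transversality) to make the variation admissible, followed by the Barbosa--do Carmo volume correction via the implicit function theorem using an auxiliary interior bump $g$ --- is exactly the standard proof and is the one given in \cite{AS}; the only point to handle with a little more care is the ambient extension of $Y$ when $x$ is merely immersed (one works locally or constructs the variation directly rather than via a global ambient flow), but this is routine.
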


Next we give the second variational formula for the energy functional $\mathcal{E}_{F}$. This formula is new in the anisotropic capillary case, even for $n=2$.
\begin{prop}\label{second-var}
Let $x: \Sigma\rightarrow B$ be an immersed anisotropic capillary CAMC (anisotropic capillary minimal, respectively) hypersurface and $x(\cdot, t)$ be a volume-preserving (compactly supported, respectively) admissible variation with variational vector field $Y$ having $f\nu$ as its normal part.
Then
\begin{eqnarray}\label{second}
&&\mathcal{E}_{F}''(0)=-\int_\Sigma(\div_{\Sigma}(A_{F}\nabla f)+\langle A_{F}\circ d\nu,d\nu\rangle f)f\,dA+\int_{\p \Sigma} \left(\langle A_{F}\nabla f,\mu\rangle-q_{F}f\right)f\,ds,
\end{eqnarray}
where
\begin{alignat}{2}\label{qF}
q_{F}:=\frac{1}{\langle\mu,\bar{N}\rangle^{2}}\left[\langle\mu_{F},\bar{N}\rangle h^{\partial B}(\bar{\nu},\bar{\nu})+h^{\partial B}(\bar{\nu},(\nu_F)^{\p\S})\right]-\frac{\langle\nu,\bar{N}\rangle}{\langle\mu,\bar{N}\rangle} h_{F}(\mu,\mu).
\end{alignat}
\end{prop}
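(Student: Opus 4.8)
The plan is to differentiate the first variation formula \eqref{first-varformula} once more at $t=0$, using that $x$ is anisotropic capillary CAMC (resp.\ minimal) to discard the terms which vanish at a critical point, and then to match the outcome with \eqref{second}. Applying \eqref{first-varformula} to the immersion $x(t,\cdot)$ for each $t$ gives $\mathcal{E}_F'(t)=\int_{\Sigma}H_F\langle Y,\nu\rangle\,dA_t+\int_{\p\Sigma}\langle Y,\mu_F+\omega_0\bar\nu\rangle\,ds_t$, where all quantities refer to $x(t,\cdot)$, $Y=\frac{\p}{\p t}x(t,\cdot)$, and $Y\in T(\p B)$ along $\p\Sigma$ for every $t$. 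I would differentiate at $t=0$. Since $H_F$ is constant and, by \eqref{xeq2}--\eqref{xeq3}, $\mu_F+\omega_0\bar\nu=(\omega_0-\langle\nu_F,\bar N\rangle)\bar\nu+\langle\nu_F,\bar\nu\rangle\bar N$, the anisotropic capillary condition $\langle\nu_F,\bar N\rangle=\omega_0$ forces $\langle Y,\mu_F+\omega_0\bar\nu\rangle$ to vanish at $t=0$ (because $Y\perp\bar N$ along $\p\Sigma$), while $\int_\Sigma\langle Y,\nu\rangle\,dA_t=\mathcal{V}'(t)\equiv 0$ for a volume-preserving variation (resp.\ $H_F\equiv 0$ in the minimal case). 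Hence the variations of the area elements $dA_t$, $ds_t$ do not enter, and one is left with $\mathcal{E}_F''(0)=\int_\Sigma\dot H_F\,f\,dA+\int_{\p\Sigma}\frac{d}{dt}\big|_{0}\langle Y,\mu_F+\omega_0\bar\nu\rangle\,ds$, where $\dot H_F:=\frac{\p}{\p t}\big|_{0}H_F$ and $f=\langle Y,\nu\rangle|_{0}$.

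For the interior term I would use the standard linearizations: differentiating $\langle\nu,\nu\rangle=1$ and $\langle\nu,X\rangle=0$ gives $\frac{\p}{\p t}\big|_{0}\nu=d\nu(Y^\Sigma)-\nabla f$, so by \eqref{euc-sph-der1}, \eqref{euc-sph-der2} and \eqref{weingerten} one has $\frac{\p}{\p t}\big|_{0}\nu_F=A_F(\nu)\big(d\nu(Y^\Sigma)-\nabla f\big)=d\nu_F(Y^\Sigma)-A_F\nabla f$, and then the usual computation for the variation of the anisotropic mean curvature (together with $\nabla H_F=0$) yields $\dot H_F=-\big(\div_\Sigma(A_F\nabla f)+\langle A_F\circ d\nu,d\nu\rangle f\big)$, which reproduces the interior integrand of \eqref{second}.

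The substance of the proof is the boundary term. Because $\langle Y,\mu_F+\omega_0\bar\nu\rangle$ vanishes at $t=0$, we have $\frac{d}{dt}\big|_{0}\langle Y,\mu_F+\omega_0\bar\nu\rangle=\langle\bar\nabla_{\p_t}Y,\mu_F+\omega_0\bar\nu\rangle+\langle Y,\bar\nabla_{\p_t}(\mu_F+\omega_0\bar\nu)\rangle$ at $t=0$. Differentiating the constraint $\langle Y,\bar N\rangle=0$ along $\p\Sigma$ gives $\langle\bar\nabla_{\p_t}Y,\bar N\rangle=-h^{\p B}(Y,Y)$; inserting this together with the expansion of $\mu_F+\omega_0\bar\nu$ above, the two $h^{\p B}(Y,Y)$ terms cancel and one gets $\frac{d}{dt}\big|_{0}\langle Y,\mu_F+\omega_0\bar\nu\rangle=-\frac{f}{\langle\mu,\bar N\rangle}\frac{d}{dt}\big|_{0}\langle\nu_F,\bar N\rangle$, where along $\p\Sigma$ the relations \eqref{nubar-0}--\eqref{Nbar-0}, the identity $\langle\mu,\bar N\rangle^2+\langle\nu,\bar N\rangle^2=1$ and the constraint $\langle Y,\bar N\rangle=0$ force $\langle Y,\mu\rangle=-\frac{\langle\nu,\bar N\rangle}{\langle\mu,\bar N\rangle}f$ and $\langle Y,\bar\nu\rangle=\frac{f}{\langle\mu,\bar N\rangle}$. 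Finally I would expand $\frac{d}{dt}\big|_{0}\langle\nu_F,\bar N\rangle=\langle\frac{\p}{\p t}\big|_{0}\nu_F,\bar N\rangle+\langle\nu_F,\bar\nabla_{Y}\bar N\rangle$, substitute $\frac{\p}{\p t}\big|_{0}\nu_F=d\nu_F(Y^\Sigma)-A_F\nabla f$, and decompose $Y^\Sigma$, $\bar N$, $\nu_F$ along $\p\Sigma$ in the frames $\{\mu,\nu\}$, $\{\bar\nu,\bar N\}$ and $T(\p\Sigma)$ via \eqref{mu-0}--\eqref{Nbar-0}. The unprescribed tangential part $Y^{\p\Sigma}$ of $Y$ then shows up through the map $W\mapsto\langle\mu,\bar N\rangle h_F(W,\mu)+\langle\nu_F,\bar\nabla_W\bar N\rangle$, which vanishes identically: differentiating the capillary condition $\langle\nu_F,\bar N\rangle\equiv\omega_0$ in a direction $W\in T(\p\Sigma)$ gives $\langle\bar\nabla_W\nu_F,\bar N\rangle+\langle\nu_F,\bar\nabla_W\bar N\rangle=0$, while $\langle\bar\nabla_W\nu_F,\bar N\rangle=\langle d\nu_F(W),\bar N\rangle=\langle\mu,\bar N\rangle h_F(W,\mu)$ by \eqref{Nbar-0}. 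The remaining terms, after multiplication by $-f/\langle\mu,\bar N\rangle$, assemble into exactly $\big(\langle A_F\nabla f,\mu\rangle-q_F f\big)f$ with $q_F$ as in \eqref{qF}; adding the interior contribution proves \eqref{second}. The minimal case is identical, with $H_F\equiv 0$ and $f\in C_c^\infty(\Sigma)$ throughout.

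The hard part is this boundary computation: keeping track of which of the frame fields $\nu,\nu_F,\mu,\mu_F,\bar\nu,\bar N$ move under the variation and by how much, re-expanding repeatedly in the two orthonormal frames of $N(\p\Sigma)$, and---the decisive point---recognizing that all terms carrying the unprescribed tangential part $Y^{\p\Sigma}$ cancel precisely because $\langle\nu_F,\bar N\rangle$ is constant on $\p\Sigma$; this cancellation is also what makes $\mathcal{E}_F''(0)$ depend on $Y$ only through $f$.
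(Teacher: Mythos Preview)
Your argument is correct and reaches the same formula as the paper, but the organization of the boundary computation is genuinely cleaner than the paper's. The paper differentiates $\mu_F$ directly, splits $\langle Y,\mu_F'+\omega_0\bar\nu'\rangle$ into six pieces $\mathrm{I}$--$\mathrm{VI}$, and feeds in three preparatory lemmas (explicit formulas for $\nu',\mu',\bar\nu'$; a relation linking $S_1$ and $h^{\partial B}$; and the identity $h_F(Y^\Sigma,\mu)+\frac{1}{\langle\mu,\bar N\rangle}h^{\partial B}(Y,(\nu_F)^{\partial\Sigma})+\frac{\langle\mu_F,\bar N\rangle}{\langle\mu,\bar N\rangle}h^{\partial B}(Y,\bar\nu)=q_F f$). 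You instead observe that $\mu_F+\omega_0\bar\nu=(\omega_0-\langle\nu_F,\bar N\rangle)\bar\nu+\langle\nu_F,\bar\nu\rangle\bar N$ for all $t$ and that $\langle Y,\bar N\rangle\equiv 0$, so the boundary integrand is simply $(\omega_0-\langle\nu_F,\bar N\rangle)\langle Y,\bar\nu\rangle$; its $t$--derivative at $0$ collapses to $-\frac{f}{\langle\mu,\bar N\rangle}\frac{d}{dt}\big|_0\langle\nu_F,\bar N\rangle$, and one is left with differentiating a single scalar. Your route makes the cancellation of the unprescribed tangential part $Y^{\partial\Sigma}$ conceptually transparent: it is exactly the infinitesimal statement that the capillary condition $\langle\nu_F,\bar N\rangle=\omega_0$ holds along all of $\partial\Sigma$. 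This is precisely the content of the paper's Lemma~\ref{N-and-n-2}, but there it is established by a separate computation rather than emerging from the structure of the problem. The trade--off is that the paper's approach records more of the intermediate geometry (the individual variations of $\mu$ and $\bar\nu$), which could be reused elsewhere; your approach is shorter and more to the point for this particular statement.
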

We postpone the proof of Proposition \ref{second-var} to Appendix \ref{app}.
\begin{definition}
An anisotropic capillary CAMC immersion $x: \Sigma\rightarrow B$ is called weakly stable if $\mathcal{E}_{F}''(0)\ge 0$ for any volume-preserving admissible variation.

An anisotropic capillary minimal immersion $x: \Sigma\rightarrow B$ is called (strongly) stable if $\mathcal{E}_{F}''(0)\ge 0$ for any compactly supported admissible variation.
\end{definition}
As a consequence of Proposition \ref{second-var} and Proposition \ref{vol-pres-var}, we get the following
\begin{prop}\label{stable-domain} An anisotropic capillary CAMC immersion $x: \Sigma\rightarrow B$ is weakly stable if and only if
\begin{eqnarray}\label{stab-ineq'}
\quad-\int_\Sigma(\div_{\Sigma}(A_{F}\nabla f)+\langle A_{F}\circ d\nu,d\nu\rangle f)f\,dA+\int_{\p \Sigma} \left(\langle A_{F}\nabla f,\mu\rangle-q_{F}f\right)f\,ds\ge 0,
\end{eqnarray}
for any $f$ satisfying $\int_{\Sigma} f\, dA=0$.

An anisotropic capillary minimal immersion $x: \Sigma\rightarrow B$ is (strongly) stable if and only if \eqref{stab-ineq'} is satisfied for
 any $f\in C_{c}^{\infty}(\Sigma)$.
\end{prop}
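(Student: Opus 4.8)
The plan is to deduce both equivalences directly from the second variation formula \eqref{second} of Proposition \ref{second-var} together with the characterization of volume-preserving variations in Proposition \ref{vol-pres-var}. First I would treat the CAMC case. Given a weakly stable anisotropic capillary CAMC immersion $x$, let $f\in C^\infty(\Sigma)$ satisfy $\int_\Sigma f\,dA=0$. By Proposition \ref{vol-pres-var} there exists a volume-preserving admissible variation of $x$ whose variational vector field has $f\nu$ as its normal part. Applying Proposition \ref{second-var} to this variation, $\mathcal{E}_F''(0)$ equals the right-hand side of \eqref{second}, and weak stability forces this quantity to be $\ge 0$; this gives \eqref{stab-ineq'}. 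Conversely, suppose \eqref{stab-ineq'} holds for all $f$ with $\int_\Sigma f\,dA=0$. Let $Y$ be the variational vector field of an arbitrary volume-preserving admissible variation, and write $Y=Y^\Sigma+f\nu$. By \eqref{wetting-pre} we have $\int_\Sigma f\,dA=0$, and by Proposition \ref{second-var} the value $\mathcal{E}_F''(0)$ depends on the variation only through its normal part $f\nu$, hence equals the right-hand side of \eqref{second}, which is $\ge 0$ by \eqref{stab-ineq'}. Thus $x$ is weakly stable.

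For the anisotropic capillary minimal case the argument is the same but simpler, since there is no volume constraint and hence no orthogonality condition on $f$. If $x$ is strongly stable, then for any $f\in C_c^\infty(\Sigma)$ one takes a compactly supported admissible variation with normal part $f\nu$ (for instance $x(t,p)$ obtained by flowing along a compactly supported extension of $f\nu$ to $B$ that is tangent to $\partial B$ along $\partial\Sigma$, projected appropriately; existence is standard and does not require the volume-preserving refinement of Proposition \ref{vol-pres-var}), and \eqref{second} together with $\mathcal{E}_F''(0)\ge 0$ yields \eqref{stab-ineq'}. Conversely, if \eqref{stab-ineq'} holds for all $f\in C_c^\infty(\Sigma)$, then for any compactly supported admissible variation with normal part $f\nu$ we have $\mathcal{E}_F''(0)$ equal to the right-hand side of \eqref{second} by Proposition \ref{second-var}, hence $\ge 0$, so $x$ is strongly stable.

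The only genuine point to address carefully is that $\mathcal{E}_F''(0)$ in \eqref{second} depends on the admissible variation solely through the normal component $f\nu$ of its variational vector field, so that tangential components and higher-order data of the variation play no role; but this is already built into the statement of Proposition \ref{second-var}, which expresses $\mathcal{E}_F''(0)$ purely in terms of $f$. (For the CAMC case one should also note that, since $x$ is a critical point of $\mathcal{E}_F$ under volume-preserving variations by Proposition \ref{Def-CAMC}, the first-order term vanishes and $\mathcal{E}_F''(0)$ is indeed the relevant stability quantity; likewise in the minimal case $\mathcal{E}_F'(0)=0$ for all compactly supported admissible variations.) With these observations the proposition is immediate, so I expect no substantial obstacle — the work has already been done in establishing Propositions \ref{vol-pres-var} and \ref{second-var}.
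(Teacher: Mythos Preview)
Your proposal is correct and follows exactly the approach the paper intends: the paper does not give a detailed proof but simply states the proposition ``as a consequence of Proposition \ref{second-var} and Proposition \ref{vol-pres-var}''. Your write-up spells out precisely this consequence, correctly identifying that the second variation formula \eqref{second} depends only on the normal component $f$ and that Proposition \ref{vol-pres-var} supplies the needed variations in the CAMC case.
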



\
\section{Rigidity for stable anisotropic capillary hypersurfaces in $\mathbb{R}^{n+1}_{+}$}\label{sec4}

In this and next section, we consider the anisotropic capillary hypersurfaces in $\mathbb{R}^{n+1}_{+}$, that is the domain is $B=\rr^{n+1}_{+}$. In this setting, $h^{\p B}\equiv0$ and $\bar N=-E_{n+1}$.
Abuse of notation, we use $\mu_{n+1}$ and $\nu_{n+1}$ to denote $\langle\mu, E_{n+1}\rangle$ and $\langle\nu, E_{n+1}\rangle$ on $\Sigma$ respectively.

In this case, $q_F$ in the stability inequality \eqref{stab-ineq'} is given by
\begin{eqnarray}\label{qF-half}
q_{F}=-\frac{\nu_{n+1}}{\mu_{n+1}}h_{F}(\mu,\mu)=\frac{1}{F(\nu)}\left(\frac{\omega_{0}}{\mu_{n+1}}+\langle\nu_{F},\mu\rangle\right)h_{F}(\mu,\mu).
\end{eqnarray}

The following proposition is a fundamental fact for anisotropic capillary hypersurfaces in $\mathbb{R}^{n+1}_{+}$.  
 \begin{prop} \label{lemma1} Let $x: \Sigma\to \mathbb{R}^{n+1}_{+}$ be an anisotropic capillary immersion. 
 Then $\mu$ is an anisotropic principal direction of $\p \Sigma$ in $\Sigma$, that is, $h_{F}(e, \mu)=0$ for any $e\in T(\p \Sigma)$.
\end{prop}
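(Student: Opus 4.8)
The plan is to work along $\partial\Sigma$ and exploit the special structure of the half-space, where $\bar N=-E_{n+1}$ is a \emph{constant} vector field. First I would recall the anisotropic capillary boundary condition \eqref{angle}, which in this setting reads $\langle\nu_F,E_{n+1}\rangle=-\omega_0$ along $\partial\Sigma$, a constant. Since $\nu_F=\Phi(\nu)=DF(\nu)+F(\nu)\nu$ depends only on $\nu$, and $E_{n+1}$ is parallel, differentiating this identity tangentially along $\partial\Sigma$ should produce the desired orthogonality. Concretely, for $e\in T(\partial\Sigma)$, I would compute $e\langle\nu_F,E_{n+1}\rangle=\langle\bar\nabla_e\nu_F,E_{n+1}\rangle=\langle d\nu_F(e),E_{n+1}\rangle=0$, using that $\bar\nabla_e E_{n+1}=0$ and that $d\nu_F(e)\in T\Sigma$ by \eqref{weingerten}.

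The second step is to turn this into the statement $h_F(e,\mu)=0$. I would decompose $E_{n+1}$ (equivalently $\bar N$) in the basis $\{\mu,\nu\}$ of $N(\partial\Sigma)$ using \eqref{Nbar-0}: $\bar N=\langle\mu,\bar N\rangle\mu+\langle\nu,\bar N\rangle\nu$, so $E_{n+1}=-\langle\mu,\bar N\rangle\mu-\langle\nu,\bar N\rangle\nu=\mu_{n+1}\,\mu+\nu_{n+1}\,\nu$ (matching the sign conventions $\bar N=-E_{n+1}$, $\mu_{n+1}=\langle\mu,E_{n+1}\rangle$, $\nu_{n+1}=\langle\nu,E_{n+1}\rangle$). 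Substituting into $\langle d\nu_F(e),E_{n+1}\rangle=0$ and using that $d\nu_F(e)\in T\Sigma$ is orthogonal to $\nu$, I get $\mu_{n+1}\langle d\nu_F(e),\mu\rangle=0$, i.e. $\mu_{n+1}\,h_F(e,\mu)=0$ since $h_F(e,\mu)=\langle\bar\nabla_e\nu_F,\mu\rangle=\langle d\nu_F(e),\mu\rangle$. Because transversality gives $\mu_{n+1}=-\langle\mu,\bar N\rangle\neq0$, we conclude $h_F(e,\mu)=0$ for all $e\in T(\partial\Sigma)$, which is exactly the assertion that $\mu$ is an anisotropic principal direction.

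One small point I would be careful about is the symmetry of $h_F$: a priori $h_F(X,Y)=\langle A_F\circ h(X,Y)\rangle$ need not be symmetric as a bilinear form even though its eigenvalues are real, so $h_F(e,\mu)=0$ and $h_F(\mu,e)=0$ should be checked as the genuinely needed one — here the computation naturally yields $h_F(e,\mu)=\langle\bar\nabla_e\nu_F,\mu\rangle$, which is the form appearing in the definition \eqref{second-h}, so this is consistent. The argument is short and the only real ingredient is the constancy of both $E_{n+1}$ and $\langle\nu_F,\bar N\rangle$ along $\partial\Sigma$; I do not anticipate a serious obstacle, though the bookkeeping of the four normal fields and their sign conventions in \eqref{mu-0}--\eqref{Nbar-0} is the place where an error could most easily creep in, so I would state explicitly which orientation convention is in force before doing the decomposition.
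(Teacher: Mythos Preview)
Your proposal is correct and follows essentially the same argument as the paper: differentiate the constant $\langle\nu_F,E_{n+1}\rangle$ along $\partial\Sigma$, decompose $E_{n+1}$ in the $\{\mu,\nu\}$ frame via \eqref{Nbar-0}, drop the $\nu$-component since $d\nu_F(e)\in T\Sigma$, and divide by the nonvanishing $\mu_{n+1}$. Your extra remarks on the sign conventions and the (a priori) asymmetry of $h_F$ are reasonable caution but not needed for the argument as stated.
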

\begin{proof}
For any $e\in T(\p \Sigma)$, by using \eqref{angle} and \eqref{Nbar-0}, we have
\begin{eqnarray*}
0&=&\nabla_{e}\langle\nu_{F},E_{n+1}\rangle=\langle \bar{\nabla}_{e}\nu_{F},E_{n+1}\rangle=\langle \bar{\nabla}_{e}\nu_{F},\mu_{n+1}\mu+\nu_{n+1}\nu\rangle\\
&=&\mu_{n+1}\langle \bar{\nabla}_{e}\nu_{F},\mu\rangle+\nu_{n+1}\langle \bar{\nabla}_{e}\nu_{F},\nu\rangle
=\mu_{n+1}h_{F}(e,\mu).
\end{eqnarray*}
Since $\mu_{n+1}\neq0$ along $\partial\Sigma$, we get the assertion.
\end{proof}


In \cite{JWXZ}, the following anisotropic Minkowski-type formula has been proved.
\begin{prop}[{\rm \cite[Theorem 1.3]{JWXZ}}]\label{prop-integral}\,Let $x: \Sigma\to\mathbb{R}^{n+1}_{+}$ be a compact anisotropic capillary immersion. 
 Then
\begin{equation}\label{aniso-Mink0.2}
\int_{\Sigma} \left[n(F(\nu)+\omega_{0}\langle E^{F}_{n+1},\nu\rangle)-H_{F}\langle x,\nu\rangle\right] dA=0
\end{equation}
where 
$E_{n+1}^F$ is a constant vector field defined by
\begin{eqnarray}\label{En+1F}
E_{n+1}^F= \begin{cases}\quad\frac{\Phi\left(E_{n+1}\right)}{F\left(E_{n+1}\right)} & \text { if } \omega_0<0,\\
-\frac{\Phi\left(-E_{n+1}\right)}{F\left(-E_{n+1}\right)} & \text { if } \omega_0 \geq 0.
\end{cases}
\end{eqnarray}
\end{prop}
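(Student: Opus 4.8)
\textbf{Proof proposal for Proposition \ref{prop-integral}.}

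The plan is to follow the standard Minkowski-formula strategy: integrate the divergence of a suitable tangential vector field built from the anisotropic position vector and the Wulff shape parametrization, and then control the resulting boundary term using the anisotropic capillary condition \eqref{angle}. Concretely, I would introduce on $\Sigma$ the tangential vector field $X^{\S}$, where $X = x - \langle \text{something}\rangle E_{n+1}^F$ is a modified position vector chosen so that its restriction to $\partial\Sigma \subset \rr^n$ is tangent to the hyperplane $\rr^n$; the point of subtracting a multiple of $E_{n+1}^F$ (rather than $E_{n+1}$ itself) is precisely to make the anisotropic boundary condition $\langle \nu_F, \bar N\rangle = \omega_0$ interact cleanly with the divergence structure, since $\Phi$ and $F$ are the natural anisotropic objects here. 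First I would record the two basic differential identities: $\div_\Sigma (x^\S) = n - H\langle x,\nu\rangle$ in the isotropic case, and its anisotropic analogue obtained by pairing with $A_F$, namely a formula of the shape $\div_\Sigma\big((\bar\nabla F(\nu))\text{-twisted position}\big) = nF(\nu) - H_F\langle x,\nu\rangle$, using \eqref{euc-sph-der1}, \eqref{euc-sph-der2}, \eqref{HF} and the Codazzi-type symmetry of $h_F$.

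Next I would compute the divergence of the full vector field including the $E_{n+1}^F$ correction term. The term $\langle E_{n+1}^F, \nu\rangle$ differentiates via the (anisotropic) Weingarten map, and combining with the first identity should produce, after the divergence theorem, the bulk integrand $n(F(\nu) + \omega_0 \langle E_{n+1}^F, \nu\rangle) - H_F \langle x,\nu\rangle$ together with a boundary integral over $\partial\Sigma$. The boundary term will be of the form $\int_{\partial\Sigma} \langle Z, \mu\rangle\, ds$ for an explicit $Z$; here I would use the splitting relations \eqref{mu-0}--\eqref{Nbar-0}, the identity $\bar N = -E_{n+1}$, the capillary condition \eqref{angle}, and the fact that $\partial\Sigma$ lies in the hyperplane $\{x_{n+1}=0\}$ (so $\langle x, E_{n+1}\rangle = 0$ on $\partial\Sigma$). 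The normalization of $E_{n+1}^F$ in \eqref{En+1F} — dividing $\Phi(\pm E_{n+1})$ by $F(\pm E_{n+1})$, with the sign dictated by that of $\omega_0$ — is exactly what is needed so that this boundary integrand cancels identically; the sign split reflects which of $\pm E_{n+1}$ is the relevant "pole" of the Wulff shape for a given contact-angle regime.

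The main obstacle I anticipate is the boundary term bookkeeping: one has to show that the tangential component along $\mu$ of the correction vector field, evaluated using $\langle\nu_F,\bar N\rangle=\omega_0$ and the four-frame relations, is pointwise zero along $\partial\Sigma$ — this is where the precise coefficient $\langle x,\bar\nu\rangle$-type weighting and the homogeneity of $F$ must be matched against $\omega_0$ and $F(\pm E_{n+1})$. Since the claim is quoted verbatim from \cite[Theorem 1.3]{JWXZ}, I would either reproduce their argument or simply cite it; for the purposes of this paper it suffices to invoke the statement, so I would keep the proof short, present the two divergence identities, and refer the reader to \cite{JWXZ} for the boundary-term computation, remarking only that the specific form of $E_{n+1}^F$ is chosen to annihilate that term.
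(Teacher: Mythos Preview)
The paper does not prove this proposition at all: it is stated with attribution to \cite[Theorem~1.3]{JWXZ} and then used as a black box. Your final suggestion --- to simply cite \cite{JWXZ} --- is therefore exactly what the paper does, and is the appropriate choice here.

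Your sketched argument is in the right spirit (divergence of a tangential field built from the position vector and $\nu_F$, with an $E_{n+1}^F$ correction engineered to kill the boundary integral via the capillary condition), but it is too vague to stand as a proof: the placeholder ``$X = x - \langle\text{something}\rangle E_{n+1}^F$'' needs to be made precise, and the actual vector field in \cite{JWXZ} is not simply the tangential projection of a modified position vector but rather a combination of the form $F(\nu)x^{\S} - \langle x,\nu\rangle\, DF(\nu)$ plus the $E_{n+1}^F$-correction, whose divergence yields $nF(\nu) - H_F\langle x,\nu\rangle$ directly. If you wanted to include a proof, you would need to write down that vector field explicitly and carry out the boundary computation; otherwise, citing \cite{JWXZ} as the paper does is entirely sufficient.
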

Note that $E_{n+1}^F$ satisfies
$\langle E_{n+1}^F,E_{n+1}\rangle=1.$
From boundary condition \eqref{angle} and the anisotropic Cauchy-Schwarz inequality (see for example \cite[Proposition 2.4]{HLMG}), we know that
\begin{equation}\label{omega0}
\omega_{0}\in(-F(E_{n+1}),F(-E_{n+1})).
\end{equation}
It was proved in \cite[Proposition 3.4]{JWXZ} that
\begin{equation}\label{Fz}
F(z)+\omega_0\left\langle E_{n+1}^F, z\right\rangle> 0 , \quad \text {for any}\,\, z \in \mathbb{S}^n.
\end{equation}
Since $\mathbb{S}^{n}$ is compact, we have
\begin{equation}\label{Fz-nonc}
0<C_{1}\leq F(z)+\omega_0\left\langle E_{n+1}^F, z\right\rangle\leq C_{2}, \quad \text {for any}\,\, z \in \mathbb{S}^n,
\end{equation}
where $C_{1}$ and $C_{2}$ are constant only depending on $\omega_{0}$.

Next we derive the equations for several geometric quantities. Denote the $F$-Jacobi operator $$J_{F}:=\div_{\Sigma}(A_{F}\nabla\cdot)+\langle A_{F}\circ d\nu,d\nu\rangle.$$ 
\begin{prop}\label{prop4.66} 
The following identities holds on $\S$:
	\begin{eqnarray}
	J_{F}F(\nu)&=&\langle \nabla H_{F},DF|_{\nu}\rangle+tr(h^{2}_{F}),\label{xmu}\\
	J_{F}\langle E_{n+1}^{F},\nu\rangle&=&\langle E^{F}_{n+1},\nabla H_{F}\rangle,\label{Ennu}\\
	J_{F}\langle x,\nu\rangle&=&\langle x,\nabla H_{F}\rangle+H_{F}.\label{Xmu1}
	\end{eqnarray}
\end{prop}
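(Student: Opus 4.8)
The plan is to establish the three identities in Proposition \ref{prop4.66} by direct computation of the operator $J_F=\div_{\Sigma}(A_F\nabla\cdot)+\langle A_F\circ d\nu,d\nu\rangle$ applied to each of the three scalar functions, working at a point $p\in\Sigma$ and choosing an orthonormal frame $\{e_i\}$ that is geodesic at $p$. The basic ingredients are the structure equations for the anisotropic Weingarten operator: $\bar\nabla_{e_i}\nu_F=-(A_F\circ d\nu)(e_i)=-h_F(e_i,\cdot)^\sharp$ (note $h_F$ need not be symmetric, so one must be careful with index placement), together with $DF(\nu)=\nu_F^\Sigma=\nu_F-F(\nu)\nu$ from \eqref{nuf}, and the Codazzi-type identity relating $\nabla H_F$ to the divergence of $h_F$. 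A convenient reformulation I would use throughout is that for any function $u$, $\div_\Sigma(A_F\nabla u)=\div_\Sigma\big((A_F(\nu)\circ d\nu)\cdots\big)$-type expressions reduce cleanly once one writes $A_F\nabla u$ in terms of the ambient gradient restricted to $\Sigma$; in practice this is the anisotropic analogue of the classical computations $\Delta\langle x,\nu\rangle$, $\Delta\langle a,\nu\rangle$, $\Delta\nu$ for CMC hypersurfaces.

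For \eqref{xmu}, I would start from $F(\nu)=\langle\nu_F,\nu\rangle$ and compute $\nabla F(\nu)=DF(\nu)$-type terms; then $\div_\Sigma(A_F\nabla F(\nu))$ produces a Codazzi contraction giving $\langle\nabla H_F,DF|_\nu\rangle$ plus a curvature term, and adding $\langle A_F\circ d\nu,d\nu\rangle F(\nu)$ should complete the square to produce $\tr(h_F^2)$, where $h_F^2$ means the composition $(A_F\circ d\nu)\circ(A_F\circ d\nu)$ and its trace. For \eqref{Ennu}, since $E_{n+1}^F$ is a constant vector, $\langle E_{n+1}^F,\nu\rangle$ behaves like a linear coordinate of the Gauss map; differentiating twice and contracting with $A_F$, the curvature terms coming from $\div_\Sigma$ and from $\langle A_F\circ d\nu,d\nu\rangle\langle E_{n+1}^F,\nu\rangle$ should cancel exactly (this is the anisotropic version of $\Delta\langle a,\nu\rangle=-|h|^2\langle a,\nu\rangle+\langle a,\nabla H\rangle$ for an ambient constant vector $a$), leaving only the $\langle E_{n+1}^F,\nabla H_F\rangle$ term. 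For \eqref{Xmu1}, I would use $\bar\nabla_{e_i}x=e_i$, so $\nabla\langle x,\nu\rangle$ involves $h_F$ acting on the tangential part of $x$; applying $\div_\Sigma(A_F\nabla\cdot)$ and again using Codazzi gives $\langle x,\nabla H_F\rangle$, the extra $H_F$ term arising from the trace $\tr(d\nu_F)=H_F$ hitting $\langle\bar\nabla x,\nu\rangle$-type contractions, while the $\langle A_F\circ d\nu,d\nu\rangle\langle x,\nu\rangle$ term cancels the remaining $\tr(h_F^2)\langle x,\nu\rangle$-type contribution.

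The main obstacle I anticipate is bookkeeping with the non-symmetry of $h_F$ and the fact that $A_F$ is a field along the Gauss map, not a fixed endomorphism, so that $\nabla(A_F(\nu))$ contributes genuine extra terms under $\div_\Sigma$; one must verify that these extra terms, together with the asymmetric part of $h_F$, organize themselves into exactly the anisotropic Codazzi identity $\div_\Sigma(h_F(\cdot,\cdot)^\sharp)=\nabla H_F$ (up to the appropriate transposition convention). Getting the Codazzi equation for $h_F$ right — and confirming that $\tr(h_F^2)$ in \eqref{xmu} really means the trace of the square of $A_F\circ d\nu$ rather than some symmetrized version — is where the care is needed; once that identity is in hand, all three computations are parallel to the classical isotropic ones and the curvature terms cancel or combine as indicated. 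I would also double-check signs against the sign conventions fixed in Section \ref{sec2} (in particular $h(X,Y)=\langle\bar\nabla_X\nu,Y\rangle$ and $\bar N=-E_{n+1}$), since these propagate into \eqref{Ennu} and \eqref{Xmu1}.
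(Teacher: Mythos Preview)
Your plan is exactly the paper's approach: compute $\div_\Sigma(A_F\nabla u)$ in a geodesic orthonormal frame at $p$, use the ordinary (isotropic) Codazzi equation $h_{ij,k}=h_{ik,j}$ together with the chain rule $(A_{ij}\circ\nu)_{,k}=A_{ijp}h_{pk}$ to regroup terms into $(H_F)_{,k}$, and observe that the leftover curvature terms cancel against $\langle A_F\circ d\nu,d\nu\rangle\, u$ (for \eqref{Ennu}, \eqref{Xmu1}) or combine to $\tr(h_F^2)$ (for \eqref{xmu}). Two small corrections: with the paper's convention $h(X,Y)=\langle\bar\nabla_X\nu,Y\rangle$ one has $\bar\nabla_{e_i}\nu_F=+(A_F\circ d\nu)(e_i)$ (no minus sign), and the paper does not invoke a separate anisotropic Codazzi identity for $h_F$ --- your anticipated ``organization'' of the $\nabla A_F$ terms happens directly via the chain rule and the symmetry of $A_{ij}$, using only standard Codazzi for $h$.
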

\begin{proof}
The above formulas have been shown in \cite{MX} (also see \cite{CM, Wink}). 
For the convenience of reader, we give a direct computation.

For a fixed $p\in \Sigma$, let $\{e_{i}\}_{i=1}^{n}$ at $p$ be the local orthonormal basis and $\nabla_{e_{1}}e_{j}|_{p}=0$.
In the following we calculate at $p$. We have
\begin{eqnarray*}
\div_{\Sigma}(A_{F}\nabla (F(\nu)))&=&(A_{ij}F_{k}\circ\nu h_{kj})_{,i}=(A_{ij}\circ\nu)_{,i}(F_{k}\circ\nu)h_{kj}+A_{ij}((F_{k}\circ\nu)h_{kj})_{,i}
\\&=&A_{ijp}h_{pi}F_{k}h_{kj}+A_{ij}(F_{;ks}h_{si}h_{kj}+F_{k}h_{kji})\nonumber\\&=&(A_{ijp}h_{pi}h_{kj}+A_{ij}h_{kji})F_{k}+A_{ij}h_{si}h_{kj}F_{;ks}\nonumber\\
&=&(A_{ij}h_{ij})_{,k}F_{k}+A_{ij}h_{si}h_{kj}(A_{sk}-F\delta_{ks})\nonumber\\&=&\langle \nabla H_{F},DF|_{\nu}\rangle+tr(h_{F}^{2})-tr(A_{F}h^{2})F(\nu).\nonumber
\end{eqnarray*}
\begin{eqnarray*}
\div_{\Sigma}(A_{F}\nabla \langle E_{n+1}^{F},\nu\rangle)&=&(A_{ij}\nabla_{j}\langle E_{n+1}^{F},\nu\rangle)_{,i}=(A_{ij}\langle E_{n+1}^{F},\bar{\nabla}_{j}\nu\rangle)_{,i}\nonumber\\
&=&A_{ijk}h_{ki}\langle E_{n+1}^{F},\bar{\nabla}_{j}\nu\rangle+A_{ij}\langle E^{F}_{n+1},\bar{\nabla}_{i}\bar{\nabla}_{j}\nu\rangle\nonumber\\
&=&(A_{ijk}h_{ki}h_{jp}+A_{ij}h_{jpi})\langle E_{n+1}^{F},e_{p}\rangle-A_{ij}h_{jk}h_{ki}\langle E_{n+1}^{F},\nu\rangle\nonumber\\
&=&(A_{ij}h_{ij})_{,p}\langle E_{n+1}^{F},e_{p}\rangle-tr(A_{F}h^{2})\langle E_{n+1}^{F},\nu\rangle\nonumber\\
&=&\langle E_{n+1}^{F},\nabla H_{F}\rangle-tr(A_{F}h^{2})\langle E_{n+1}^{F},\nu\rangle.\nonumber
\end{eqnarray*}
\begin{eqnarray*}
\div_{\Sigma}(A_{F}\nabla \langle x,\nu\rangle)&=&(A_{ij}\nabla_{j}\langle x,\nu\rangle)_{,i}=(A_{ij}\circ\nu\langle x,\bar{\nabla}_{j}\nu\rangle)_{,i}\nonumber\\
&=&A_{ijk}h_{ki}\langle x,\bar{\nabla}_{j}\nu\rangle+A_{ij}(h_{ij}+\langle x,\bar{\nabla}_{i}\bar{\nabla}_{j}\nu\rangle)\nonumber\\
&=&(A_{ijk}h_{ki}h_{jp}+A_{ij}h_{jpi})\langle x,e_{p}\rangle+H_{F}-A_{ij}h_{jk}h_{ki}\langle x,\nu\rangle\nonumber\\
&=&(A_{ij}h_{ij})_{,p}\langle x,e_{p}\rangle+H_{F}-tr(A_{F}h^{2})\langle x,\nu\rangle\nonumber\\
&=&\langle x,\nabla H_{F}\rangle+H_{F}-tr(A_{F}h^{2})\langle x,\nu\rangle.\nonumber
\end{eqnarray*}
In the above computation we have used Codazzi equation $h_{ijk}=h_{ikj}$.
\end{proof}
Next we verify the boundary equations for the geometric quantities.
\begin{prop}\label{prop-boundary} Let $x:\Sigma\to \mathbb{R}^{n+1}_{+}$ be an anisotropic capillary immersion.
Then along $\p \Sigma$, we have
\begin{eqnarray}\label{Hyp-dddd1}
&&\langle A_{F}\nabla\langle x,\nu\rangle,\mu\rangle=q_{F}\langle x,\nu\rangle,\label{boundary-222}\\
&&\langle A_{F}\nabla[F(\nu)+\omega_{0}\langle E_{n+1}^{F},\nu\rangle],\mu\rangle=q_{F}(F(\nu)+\omega_{0}\langle E_{n+1}^{F},\nu\rangle).\label{boundary-111}
\end{eqnarray}
\end{prop}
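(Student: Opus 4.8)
The plan is to verify both boundary identities \eqref{boundary-222} and \eqref{boundary-111} by a direct computation along $\partial\Sigma$, exploiting the structure revealed in Proposition \ref{lemma1}: the fact that $\mu$ is an anisotropic principal direction, i.e. $h_F(e,\mu)=0$ for every $e\in T(\partial\Sigma)$. First I would rewrite the left-hand sides. Note that $A_F\nabla g$ for a function $g$ on $\Sigma$ is a tangent vector field, and $\langle A_F\nabla g,\mu\rangle$ can be re-expressed using the symmetry of $A_F$ as $\langle\nabla g,A_F\mu\rangle$. For $g=\langle x,\nu\rangle$ we have $\nabla g=(h\circ d x)(\cdot)+\ldots$; more precisely, since $\bar\nabla_{e_i}\langle x,\nu\rangle=\langle x,\bar\nabla_{e_i}\nu\rangle=h_{ij}\langle x,e_j\rangle$ (the $\langle e_i,\nu\rangle$ term vanishes), one gets $\nabla\langle x,\nu\rangle = h(x^\Sigma,\cdot)^\sharp$, whence $\langle A_F\nabla\langle x,\nu\rangle,\mu\rangle = h_F(x^\Sigma,\mu)$. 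Similarly $\nabla F(\nu)=\nabla(DF(\nu))$-type terms combine so that $\langle A_F\nabla F(\nu),\mu\rangle = h_F(DF(\nu)^\Sigma,\mu)=h_F((\nu_F)^\Sigma,\mu)$, and $\langle A_F\nabla\langle E_{n+1}^F,\nu\rangle,\mu\rangle = h_F((E_{n+1}^F)^\Sigma,\mu)$. So everything reduces to evaluating $h_F(V^\Sigma,\mu)$ for $V\in\{x,\ \nu_F,\ E_{n+1}^F\}$.

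Next, for any ambient vector $V$, decompose its tangential part as $V^\Sigma = \langle V,\mu\rangle\mu + V^{\partial\Sigma}$ where $V^{\partial\Sigma}\in T(\partial\Sigma)$. By Proposition \ref{lemma1}, $h_F(V^{\partial\Sigma},\mu)=0$, so $h_F(V^\Sigma,\mu)=\langle V,\mu\rangle\, h_F(\mu,\mu)$. Therefore the content of both identities is:
\begin{eqnarray*}
h_F(x^\Sigma,\mu) &=& \langle x,\mu\rangle h_F(\mu,\mu) \stackrel{?}{=} q_F\langle x,\nu\rangle,\\
h_F((\nu_F)^\Sigma,\mu)+\omega_0 h_F((E_{n+1}^F)^\Sigma,\mu) &=& \big(\langle\nu_F,\mu\rangle+\omega_0\langle E_{n+1}^F,\mu\rangle\big)h_F(\mu,\mu) \stackrel{?}{=} q_F\big(F(\nu)+\omega_0\langle E_{n+1}^F,\nu\rangle\big).
\end{eqnarray*}
Using the half-space form $q_F = -\tfrac{\nu_{n+1}}{\mu_{n+1}}h_F(\mu,\mu)$ from \eqref{qF-half}, each claimed identity reduces to a purely linear-algebraic relation among the coefficients, e.g. $\langle x,\mu\rangle\,\mu_{n+1} = -\langle x,\nu\rangle\,\nu_{n+1}$, which holds because $x\in\partial B=\rr^n$ along $\partial\Sigma$ forces $\langle x,E_{n+1}\rangle=0$, i.e. $\langle x,\mu\rangle\mu_{n+1}+\langle x,\nu\rangle\nu_{n+1}=\langle x, x^{N(\partial\Sigma)}$-part$\rangle=0$ by \eqref{Nbar-0} applied to $E_{n+1}=-\bar N$. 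The same mechanism handles the second identity once one checks $\langle\nu_F,\mu\rangle\mu_{n+1}+(F(\nu)+\omega_0\langle E_{n+1}^F,\nu\rangle)\tfrac{\nu_{n+1}}{?}$... concretely: $\langle\nu_F+\omega_0 E_{n+1}^F, E_{n+1}\rangle$ expanded via \eqref{Nbar-0} against the capillary condition \eqref{angle} ($\langle\nu_F,\bar N\rangle=\omega_0$, i.e. $\langle\nu_F,E_{n+1}\rangle=-\omega_0$) and $\langle E_{n+1}^F,E_{n+1}\rangle=1$ gives $\langle\nu_F+\omega_0 E_{n+1}^F,E_{n+1}\rangle = -\omega_0+\omega_0 = 0$, and then decomposing this vanishing normal component along $\{\mu,\nu\}$ produces exactly the needed coefficient identity.

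I expect the main obstacle to be purely bookkeeping: getting the signs right in the decomposition \eqref{Nbar-0} and in the relation $\bar N=-E_{n+1}$, and carefully justifying the identity $\langle A_F\nabla F(\nu),\mu\rangle=h_F((\nu_F)^\Sigma,\mu)$ — which uses $DF(\nu)=\nu_F^\Sigma$, the chain rule $\nabla(F(\nu))=A_F(\nu)(d\nu(\cdot))$ applied to $DF$, and a symmetrization — rather than anything conceptually deep. Once the left sides are identified as $\langle V,\mu\rangle h_F(\mu,\mu)$ and the right sides as $q_F\langle V,\nu\rangle$-type expressions via \eqref{qF-half}, both identities collapse to the single geometric fact that the vectors $x$ (on $\partial\Sigma$) and $\nu_F+\omega_0 E_{n+1}^F$ are orthogonal to $E_{n+1}$, the former because $\partial\Sigma\subset\rr^n$ and the latter because of the capillary condition \eqref{angle} together with $\langle E_{n+1}^F,E_{n+1}\rangle=1$.
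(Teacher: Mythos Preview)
Your proposal is correct and follows essentially the same route as the paper: both reduce the left-hand sides to $h_F(V^\Sigma,\mu)$ via $\nabla\langle V,\nu\rangle = d\nu(V^\Sigma)$ (with $V=x$ and $V=\nu_F+\omega_0 E_{n+1}^F=\bar\nabla F(\nu)+\omega_0 E_{n+1}^F$), invoke Proposition~\ref{lemma1} to collapse this to $\langle V,\mu\rangle h_F(\mu,\mu)$, and then use the decomposition $\mu=\tfrac{1}{\mu_{n+1}}E_{n+1}-\tfrac{\nu_{n+1}}{\mu_{n+1}}\nu$ together with $\langle V,E_{n+1}\rangle=0$ (from $x_{n+1}=0$ on $\partial\Sigma$, respectively from the capillary condition and $\langle E_{n+1}^F,E_{n+1}\rangle=1$) to identify the result with $q_F\langle V,\nu\rangle$. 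Your unified ``$V\perp E_{n+1}$'' framing is a pleasant way to package the two cases, but the computations are identical to the paper's.
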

\begin{proof}
In this proof we always take value along $\partial \Sigma$. From Proposition \ref{lemma1} and  \eqref{Nbar-0}, we have
\begin{eqnarray*}
\langle A_{F}\nabla\langle x,\nu\rangle,\mu\rangle&=& \langle A_{F}\circ d\nu(x),\mu\rangle
=h_{F}(\mu,\mu)\langle x,\mu\rangle
\\&=&-\frac{\nu_{n+1}}{\mu_{n+1}}h_{F}(\mu,\mu)\langle x,\nu\rangle=q_{F}\langle x,\nu\rangle.\nonumber
\end{eqnarray*}
Here we have used the fact $x_{n+1}=0$ on $\partial\mathbb{R}^{n+1}_{+}$.

Similarly, by Proposition \ref{lemma1} and \eqref{Nbar-0}, we have
\begin{eqnarray}
\langle A_{F}\nabla [F(\nu)+\omega_{0}\langle E_{n+1}^{F},\nu\rangle],\mu\rangle
&=& \langle A_{F}\circ d\nu(\bar \nabla F(\nu)+\omega_{0}E_{n+1}^{F}),\mu\rangle\nonumber\\
&=&h_{F}(\mu,\mu)\langle \bar \nabla F(\nu)+\omega_{0}E_{n+1}^{F},\mu\rangle\nonumber\\
&=&h_{F}(\mu,\mu)\left\langle \bar \nabla F(\nu)+\omega_{0}E_{n+1}^{F},\frac{1}{\mu_{n+1}}E_{n+1}-\frac{\nu_{n+1}}{\mu_{n+1}}\nu\right\rangle\nonumber\\
&=&q_{F}(F(\nu)+\omega_{0}\langle E_{n+1}^{F},\nu\rangle).\nonumber
\end{eqnarray}
where in the last equality we use boundary condition $\langle\mathcal{\nu}_{F},E_{n+1}\rangle= \langle \bar \nabla F(\nu),E_{n+1}\rangle=-\omega_{0}$ and the fact $\langle E^{F}_{n+1},E_{n+1}\rangle=1$.

\end{proof}

\
Initiated by Minkowski-type formula \eqref{aniso-Mink0.2},
we define
\begin{equation}\label{test-function}
\varphi:=n(F(\nu)+\omega_{0}\langle E^{F}_{n+1},\nu\rangle)-H_{F}\langle x,\nu\rangle.
\end{equation}
\begin{prop}\label{prop-3.1}Let $x: \Sigma\to \mathbb{R}^{n+1}_{+}$ be a compact  anisotropic capillary  CAMC  immersion. 
Then there hold:
	\begin{eqnarray}
J_{F}\varphi&=&n\tr(h_{F}^{2})-H_{F}^{2},\label{varphi1}\\
	\langle A_{F}\nabla\varphi,\mu\rangle&=&q_{F}\varphi,\label{Hyp-bdy1}\\
\int_{\Sigma}\varphi\,dA&=&0.\label{bdy-zero1}
\end{eqnarray}
\end{prop}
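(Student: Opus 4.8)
The plan is to observe that all three identities are immediate linear consequences of the pointwise formulas already in hand, the only structural input being that $H_F$ is a \emph{constant}. Accordingly I would first rewrite
\[
\varphi = n\,F(\nu) + n\omega_{0}\langle E_{n+1}^{F},\nu\rangle - H_{F}\langle x,\nu\rangle,
\]
regarding $H_F$ as a fixed real number. Since $J_{F}=\div_{\Sigma}(A_{F}\nabla\cdot)+\langle A_{F}\circ d\nu,d\nu\rangle$ is a linear operator, $J_{F}\varphi$ is the corresponding linear combination of $J_{F}F(\nu)$, $J_{F}\langle E_{n+1}^{F},\nu\rangle$ and $J_{F}\langle x,\nu\rangle$, and likewise the map $f\mapsto\langle A_{F}\nabla f,\mu\rangle$ acts linearly on the three summands along $\p\S$.

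For \eqref{varphi1} I would substitute the three identities of Proposition \ref{prop4.66}. Because $\Sigma$ is CAMC we have $\nabla H_{F}=0$, so those identities collapse to $J_{F}F(\nu)=\tr(h_{F}^{2})$, $J_{F}\langle E_{n+1}^{F},\nu\rangle=0$ and $J_{F}\langle x,\nu\rangle=H_{F}$, whence
\[
J_{F}\varphi = n\,\tr(h_{F}^{2}) + 0 - H_{F}\cdot H_{F} = n\,\tr(h_{F}^{2}) - H_{F}^{2}.
\]
For \eqref{Hyp-bdy1} I would evaluate along $\p\S$ and use the two boundary identities of Proposition \ref{prop-boundary}, namely $\langle A_{F}\nabla\langle x,\nu\rangle,\mu\rangle=q_{F}\langle x,\nu\rangle$ and $\langle A_{F}\nabla[F(\nu)+\omega_{0}\langle E_{n+1}^{F},\nu\rangle],\mu\rangle=q_{F}(F(\nu)+\omega_{0}\langle E_{n+1}^{F},\nu\rangle)$; taking the same linear combination gives
\[
\langle A_{F}\nabla\varphi,\mu\rangle = n\,q_{F}\big(F(\nu)+\omega_{0}\langle E_{n+1}^{F},\nu\rangle\big) - H_{F}\,q_{F}\langle x,\nu\rangle = q_{F}\varphi .
\]
Finally, \eqref{bdy-zero1} is nothing but the anisotropic Minkowski-type formula \eqref{aniso-Mink0.2} of Proposition \ref{prop-integral}, rephrased in terms of $\varphi$.

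There is essentially no obstacle here: the genuine content sits in Propositions \ref{prop4.66}, \ref{prop-boundary} and \ref{prop-integral}, and what remains is bookkeeping with linear combinations. The only point I would be careful about is to invoke the CAMC hypothesis (not merely minimality) precisely where the $\nabla H_{F}$-terms are dropped and where $H_{F}$ is pulled outside $J_{F}$ and outside $\langle A_{F}\nabla\cdot,\mu\rangle$; this is legitimate because $\Sigma$ is assumed to be an anisotropic capillary CAMC immersion, so $H_F$ is constant.
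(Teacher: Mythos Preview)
Your proposal is correct and follows exactly the paper's own argument: the paper simply cites Proposition~\ref{prop4.66} for \eqref{varphi1}, Proposition~\ref{prop-boundary} for \eqref{Hyp-bdy1}, and the Minkowski-type formula \eqref{aniso-Mink0.2} for \eqref{bdy-zero1}. Your write-up merely makes the linear-combination bookkeeping and the use of $\nabla H_F=0$ explicit, which is fine.
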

\begin{proof}
	\eqref{varphi1} follows from Proposition \ref{prop4.66} and \eqref{Hyp-bdy1} from Proposition \ref{prop-boundary}.
	\eqref{bdy-zero1} is the Minkowski-type formula \eqref{aniso-Mink0.2}.
\end{proof}

\begin{theorem}\label{thmm4.1}
	A compact, immersed  anisotropic capillary CAMC hypersurface in $\mathbb{R}^{n+1}_{+}$ is weakly stable if and only if it is a truncated Wulff shape, up to translation and homothety.
\end{theorem}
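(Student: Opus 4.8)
The plan is to run a Barbosa--do Carmo type argument built on the Minkowski function $\varphi$ of \eqref{test-function} and the stability inequality \eqref{stab-ineq'}. First I would note that \eqref{aniso-Mink0.2} together with the positivity \eqref{Fz-nonc} forces $H_F\neq0$ for any compact anisotropic capillary CAMC hypersurface (otherwise $0=\int_\Sigma n(F(\nu)+\omega_0\langle E^F_{n+1},\nu\rangle)\,dA>0$), so one may divide by $H_F$ throughout. By Proposition \ref{prop-3.1} the function $\varphi$ is admissible ($\int_\Sigma\varphi\,dA=0$), satisfies $J_F\varphi=n\tr(h_F^2)-H_F^2$, and has vanishing boundary operator $\langle A_F\nabla\varphi,\mu\rangle-q_F\varphi\equiv0$ on $\p\Sigma$; hence inserting $f=\varphi$ into \eqref{stab-ineq'} collapses the inequality to $\int_\Sigma(n\tr(h_F^2)-H_F^2)\,\varphi\,dA\le0$.

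The decisive point is that, although $\varphi=n(F(\nu)+\omega_0\langle E^F_{n+1},\nu\rangle)-H_F\langle x,\nu\rangle$ is not sign-definite, the $\langle x,\nu\rangle$-part contributes nothing. I would prove $\int_\Sigma(n\tr(h_F^2)-H_F^2)\langle x,\nu\rangle\,dA=0$ by applying Green's second identity for the operator $\div_\Sigma(A_F\nabla\,\cdot\,)$ to the pair $\bigl(F(\nu)+\omega_0\langle E^F_{n+1},\nu\rangle,\ \langle x,\nu\rangle\bigr)$: the boundary terms cancel by \eqref{boundary-222}--\eqref{boundary-111}, while \eqref{xmu}, \eqref{Ennu}, \eqref{Xmu1} give $\int_\Sigma\tr(h_F^2)\langle x,\nu\rangle\,dA=H_F\int_\Sigma(F(\nu)+\omega_0\langle E^F_{n+1},\nu\rangle)\,dA$, which combined with $\int_\Sigma\langle x,\nu\rangle\,dA=\tfrac{n}{H_F}\int_\Sigma(F(\nu)+\omega_0\langle E^F_{n+1},\nu\rangle)\,dA$ from \eqref{bdy-zero1} yields the claim. (Equivalently, one expands the symmetric bilinear form $Q$ underlying \eqref{stab-ineq'} as $Q(\varphi,\varphi)=Q(nG-H_FX,nG-H_FX)$ with $G:=F(\nu)+\omega_0\langle E^F_{n+1},\nu\rangle$, $X:=\langle x,\nu\rangle$, evaluating $Q(G,G)$, $Q(G,X)$, $Q(X,X)$ from the same three identities and the Minkowski formula.) Either way the stability inequality becomes
\[
n\int_\Sigma\bigl(F(\nu)+\omega_0\langle E^F_{n+1},\nu\rangle\bigr)\bigl(n\tr(h_F^2)-H_F^2\bigr)\,dA\le0 .
\]
Since the weight is $\ge C_1>0$ by \eqref{Fz-nonc} while $n\tr(h_F^2)-H_F^2=n\sum_i(\kappa_i^F)^2-\bigl(\sum_i\kappa_i^F\bigr)^2\ge0$ by Cauchy--Schwarz applied to the anisotropic principal curvatures, the integrand must vanish identically, so $\kappa_1^F=\dots=\kappa_n^F=H_F/n$ everywhere, i.e.\ $\Sigma$ is anisotropically umbilic.

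It remains to identify $\Sigma$: from $h_F=\tfrac{H_F}{n}g$ one gets $\bar\nabla_X\nu_F=\tfrac{H_F}{n}X$ for $X\in T\Sigma$, hence $\nu_F-\tfrac{H_F}{n}x$ is a constant vector and $x-p_0=\tfrac{n}{H_F}\Phi(\nu)$ for some $p_0\in\rr^{n+1}$, so $\Sigma$ lies on the Wulff shape scaled by $n/H_F$ and translated by $p_0$; compactness, $\p\Sigma\subset\rr^n$ and the capillary angle condition \eqref{angle} then force $\Sigma$ to be a truncated Wulff shape up to translation and homothety. For the converse, a truncated Wulff shape is the global minimizer of $\mathcal{E}_F$ under the volume constraint by Winterbottom's theorem \cite{Wint}, hence in particular a local minimizer, so $\mathcal{E}_F''(0)\ge0$ along every volume-preserving admissible variation; alternatively \eqref{stab-ineq'} can be verified directly using that $h_F$ is a constant multiple of the metric there. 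The step I expect to be the main obstacle is neutralizing the indefinite $\langle x,\nu\rangle$ term in $\varphi$ so that the surviving weight is positive; this relies on the exact compatibility of the Jacobi-type equations \eqref{xmu}--\eqref{Xmu1}, the matching Robin-type boundary relations \eqref{boundary-222}--\eqref{boundary-111}, and the Minkowski formula \eqref{bdy-zero1}.
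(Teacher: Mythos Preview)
Your proposal is correct and follows essentially the same strategy as the paper: use $\varphi$ as the test function in the stability inequality, kill the boundary term via \eqref{Hyp-bdy1}, and neutralize the indefinite $\langle x,\nu\rangle$ contribution by a Green-type identity so that only the positive weight $F(\nu)+\omega_0\langle E^F_{n+1},\nu\rangle$ survives against $n\tr(h_F^2)-H_F^2$. The only cosmetic differences are that the paper applies Green's formula directly to the pair $(\varphi,\langle x,\nu\rangle)$ (obtaining $\int_\Sigma\langle x,\nu\rangle J_F\varphi=H_F\int_\Sigma\varphi=0$ in one stroke, without needing to isolate $H_F\neq0$), whereas you apply it to $(G,\langle x,\nu\rangle)$ and then invoke the Minkowski formula; and the paper cites \cite{Pa} for the anisotropic umbilic classification while you spell it out via $\nu_F-\tfrac{H_F}{n}x=\text{const}$.
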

\begin{proof}
We first notice that a truncated Wulff shape in $\mathbb{R}^{n+1}_{+}$ is energy-minimizing (\cite{Wint, KP2}), hence it is weakly stable.

Next, we let $x: \Sigma\to\mathbb{R}^{n+1}_{+}$ be a compact weakly stable anisotropic capillary  CAMC immersion and show it is a truncated Wulff shape.
From \eqref{bdy-zero1}, we know that $\varphi$ is an admissible test function in \eqref{stab-ineq'}.
Therefore, by \eqref{Hyp-bdy1}, we have
\begin{eqnarray}\label{xeq1111}
0 &\le &-\int_\Sigma\varphi {J}_{F}\varphi\,+\int_{\p \Sigma} \varphi(\langle A_{F}\nabla\varphi,\mu\rangle-q_{F} \varphi)\,
\\&= &-\int_\Sigma [n(F(\nu)+\omega_{0}\langle E^{F}_{n+1},\nu\rangle)-H_{F}\langle x,\nu\rangle]J_{F}\varphi\,  \nonumber\\
&=&-\int_\Sigma n(F(\nu)+\omega_{0}\langle E^{F}_{n+1},\nu\rangle)J_{F}\varphi\,+H_{F}\int_{\Sigma}\langle x,\nu\rangle J_{F}\varphi.\nonumber
\end{eqnarray}
We compute the last term of \eqref{xeq1111} by Green's  formula. By \eqref{Xmu1}, \eqref{boundary-222}, \eqref{Hyp-bdy1} and \eqref{bdy-zero1}, we get
\begin{eqnarray}\label{xeq11120}
\int_{\Sigma}\langle x,\nu\rangle J_{F}\varphi&=&\int_{\Sigma}\varphi J_{F}\langle x,\nu\rangle+\int_{\partial \Sigma}\langle x,\nu\rangle\langle A_{F}\nabla\varphi,\mu\rangle-\varphi\langle A_{F}\nabla\langle x,\nu\rangle,\mu\rangle\\
&=&H_{F}\int_{\Sigma}\varphi+\int_{\partial \Sigma}\langle x,\nu\rangle (q_{F}\varphi)-\varphi (q_{F}\langle x,\nu\rangle),\nonumber\\
&=&0.\nonumber
\end{eqnarray}

Hence, inserting \eqref{varphi1} and \eqref{xeq11120} into \eqref{xeq1111}, we see
\begin{eqnarray}\label{umbilical}
\int_\Sigma (F(\nu)+\omega_{0}\langle E^{F}_{n+1},\nu\rangle)(n\tr(h_{F}^{2})-H_{F}^{2})dA\leq0.
\end{eqnarray}
By \eqref{Fz} and the fact that $n\tr(h_{F}^{2})\ge H_{F}^{2}$, we see the above inequality is in fact an equality. It follows that $n\tr(h_{F}^{2})= H_{F}^{2}$ and in turn $\S$ is anisotropic umbilical.
By \cite{Pa}, $\Sigma$ is a part of the Wulff shape (up to translation and homothety).
\end{proof}

\

\section{Bernstein-type theorem for anisotropic capillary minimal surfaces}\label{sec6}
In this section we prove the following Bernstein-type theorem for properly immersed, anisotropic capillary minimal surfaces in $\mathbb{R}^{3}_{+}$.
\begin{theorem}\label{thm-Bernstein-3} Let $\Sigma$ be an immersed anisotropic capillary minimal surface in $\mathbb{R}_{+}^3$. Assume that $\Sigma$ has  Euclidean area growth, that is, there exists some $C>0$ such that
\begin{equation}\label{vol-growth-3}
\operatorname{Area}\left(\Sigma \cap B_r(0)\right)<C r^2
\end{equation}
for any $r>0$. Then $\Sigma$ is stable if and only if $\Sigma$ is a half-plane.
\end{theorem}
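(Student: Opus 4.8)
The plan is to adapt the Fischer-Colbrie--Schoen / do Carmo--Peng conformal-change argument, which in the isotropic capillary case was carried out by Li--Zhou--Zhu and De Masi--De Philippis, to the anisotropic setting. The starting point is the stability inequality \eqref{stab-ineq-x}, valid for every $f\in C_c^\infty(\Sigma)$. On a minimal ($H_F=0$) anisotropic surface in $\mathbb{R}^3_+$ the operator $J_F=\div_\Sigma(A_F\nabla\,\cdot\,)+\langle A_F\circ d\nu,d\nu\rangle$ is a divergence-form Schrödinger-type operator with a sign-definite zeroth order term. The first step is to record that $\langle A_F\circ d\nu,d\nu\rangle$, the anisotropic analogue of $|h|^2$, is a nonnegative quantity and vanishes exactly at anisotropically totally geodesic points; since $n=2$ and $H_F=0$, vanishing of this term everywhere forces the surface to be a piece of a plane (the anisotropic Weingarten map has trace zero and is self-adjoint with respect to a positive metric, so both anisotropic principal curvatures vanish), and the boundary condition plus planarity then pins it down to a half-plane. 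So the whole problem reduces to showing that stability forces this curvature term to vanish.

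The second and main step is to produce, from stability, a positive (or nonvanishing) solution of $J_F u=0$ satisfying the natural conormal boundary condition $\langle A_F\nabla u,\mu\rangle = q_F u$ along $\partial\Sigma$, and to convert that into a Jacobi-field / logarithmic-cutoff argument. Concretely I would: (i) use the Euclidean area growth \eqref{vol-growth-3} together with a logarithmic cutoff $f=u\,\eta_r$, where $\eta_r$ is $1$ on $B_r$, $0$ outside $B_{r^2}$, and interpolates logarithmically, and (ii) plug this test function into \eqref{stab-ineq-x}. The uniform positive-definiteness of $A_F$ (i.e.\ $C_1^{-1}\mathrm{Id}\le A_F\le C_1\mathrm{Id}$ on $\mathbb{S}^2$ by compactness, cf.\ \eqref{Fz-nonc}) makes all the isotropic estimates go through with constants depending only on $F$: the gradient term coming from $\eta_r$ contributes $O(1/\log r)\to0$, while the bulk term is controlled by the $u$-equation. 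The boundary term needs care: here one uses Proposition \ref{lemma1}, which says $\mu$ is an anisotropic principal direction along $\partial\Sigma$, so that $h_F(e,\mu)=0$ and the boundary integrand collapses to an expression involving only $q_F$ and $h_F(\mu,\mu)$; combined with the conormal condition satisfied by $u$, the boundary contribution of the cutoff test function is absorbed. Letting $r\to\infty$ yields $\int_\Sigma \langle A_F\circ d\nu,d\nu\rangle\,u^2\,dA=0$, hence the curvature term vanishes identically, and Step 1 finishes.

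To manufacture the required solution $u$ I would argue by contradiction in the usual way: if $\Sigma$ is stable but not a half-plane, then on every relatively compact subdomain the first eigenvalue of $J_F$ (with the conormal Robin-type boundary condition encoding $q_F$) is positive, and an exhaustion argument produces a globally positive $u$ with $J_F u=0$ and the correct boundary behavior (this is the anisotropic, capillary version of Fischer-Colbrie--Schoen's Proposition; one needs the boundary condition to be of the right "Robin" type, which is exactly what Proposition \ref{lemma1} and the form of $q_F$ in \eqref{qF-half} guarantee).

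The hard part, I expect, will be the boundary terms: unlike the closed or free-boundary isotropic case, here $q_F$ is a genuinely nontrivial function on $\partial\Sigma$ (see \eqref{qF-half}), and one must check that the conormal boundary condition for the Jacobi operator is preserved under the conformal/eigenfunction construction and that the cross terms generated by the cutoff do not have a bad sign. The key structural input that makes this work is Proposition \ref{lemma1} together with the identity \eqref{qF-half} expressing $q_F$ through $h_F(\mu,\mu)$; these reduce the boundary analysis to a one-dimensional situation along $\partial\Sigma$ where the anisotropic Gauss--Codazzi relations can be used, exactly paralleling the role played by the corresponding boundary identities \eqref{boundary-222}--\eqref{boundary-111} in the compact case of Theorem \ref{thmm4.1}. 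A secondary technical point is ensuring the logarithmic cutoff estimate is clean despite the first-order term $\div_\Sigma(A_F\nabla\,\cdot\,)$ rather than the plain Laplacian; this only costs constants, using the two-sided bound on $A_F$.
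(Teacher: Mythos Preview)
Your outline has a genuine gap at the heart of Step~2. If $u>0$ solves $J_F u=0$ with the Robin condition $\langle A_F\nabla u,\mu\rangle=q_F u$, then inserting $f=u\eta_r$ into the stability inequality \eqref{stab-ineq-x} and expanding (using $J_F(u\eta)=\eta J_Fu+2\langle A_F\nabla u,\nabla\eta\rangle+u\,\div_\Sigma(A_F\nabla\eta)$ together with the boundary condition on $u$) yields, after the obvious integrations by parts, only the trivial inequality
\[
0\le \int_\Sigma u^2\langle A_F\nabla\eta_r,\nabla\eta_r\rangle\,dA.
\]
The curvature term $\langle A_F\circ d\nu,d\nu\rangle$ cancels completely; it does \emph{not} survive to give $\int_\Sigma \langle A_F\circ d\nu,d\nu\rangle\,u^2\,dA=0$ as you claim. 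The Fischer--Colbrie--Schoen equivalence (stability $\Leftrightarrow$ existence of a positive Jacobi field) is tautological here and gives no rigidity on its own; in the isotropic case one needs a further step (conformal change to a metric of nonnegative curvature, then Cohn--Vossen/parabolicity), and there is no obvious anisotropic substitute for that step, since $A_F$ destroys the conformal structure. Moreover, even if you attempted the $\log u$ route, the resulting boundary integrand is $\eta_r^2\,q_F$, and $q_F=-\frac{\nu_{n+1}}{\mu_{n+1}}h_F(\mu,\mu)$ has no sign, so you cannot discard it.

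The paper sidesteps all of this by observing that one does not need an abstract Jacobi solution: the explicit function $\psi=F(\nu)+\omega_0\langle E_{n+1}^F,\nu\rangle$ already satisfies, by \eqref{xmu}--\eqref{Ennu} and the very identity \eqref{boundary-111} you cite,
\[
J_F\psi=\tr(h_F^2)\quad\text{in }\Sigma,\qquad \langle A_F\nabla\psi,\mu\rangle=q_F\psi\quad\text{on }\partial\Sigma,
\]
and is uniformly bounded above and below by \eqref{Fz-nonc}. The point is that the right-hand side is $\tr(h_F^2)$, \emph{not} zero, so plugging $f=\psi\eta_r$ into stability and running the same integration by parts yields
\[
\int_\Sigma \psi\,\eta_r^2\,\tr(h_F^2)\,dA\le \int_\Sigma \psi^2\langle A_F\nabla\eta_r,\nabla\eta_r\rangle\,dA,
\]
with the boundary terms cancelling exactly as in your sketch. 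The two-sided bound on $\psi$ then gives $\int_\Sigma \eta_r^2\tr(h_F^2)\le C\int_\Sigma|\nabla\eta_r|^2$, and the logarithmic cutoff plus area growth finishes. So the architecture you propose (log cutoff, boundary cancellation via Proposition~\ref{lemma1}, two-sided bounds on $A_F$) is correct, but the function to multiply the cutoff by is $\psi$, not an abstractly constructed Jacobi field.
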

\begin{proof}
Assume that $\psi:=F(\nu)+\omega_{0}\langle E_{n+1}^{F},\nu\rangle$, thus from \eqref{xmu}-\eqref{Ennu} and  \eqref{boundary-111} we see that
\begin{equation}\label{equ-psi}
\begin{cases}{}
J_{F}\psi=\tr h_{F}^{2} & \text{in} \ \Sigma,\\
\langle A_{F}\nabla\psi,\mu\rangle=q_{F}\psi     & \text{on} \ \partial\Sigma,
\end{cases}
\end{equation}
For  $f \in C_c^{\infty}\left(\S\right)$, we put $\psi f$ into stability inequality \eqref{stab-ineq'}:
\begin{equation}\label{stab-nonc}
 0\leq \mathcal{E}_{F}''(0)=-\int_{\Sigma}\psi fJ_{F}(\psi f)dA+\int_{\partial\Sigma}\psi f[\langle A_{F}\nabla(\psi f),\mu\rangle-q_{F}\psi f]ds,
\end{equation}
we now calculate the first term of \eqref{stab-nonc} by integration by parts,
\begin{eqnarray}\label{equ-nonc-1}
&&-\int_{\Sigma}\psi fJ_{F}(\psi f)dA\\
&=&-\int_{\Sigma}\psi f[fJ_{F}\psi+2\langle A_{F}\nabla\psi,\nabla f\rangle+\psi\div(A_{F}\nabla f)]dA\nonumber\\
&=&-\int_{\Sigma}\psi f^{2}trh_{F}^{2}+\frac{1}{2}\langle A_{F}\nabla\psi^{2},\nabla f^{2}\rangle+\psi^{2}f\div(A_{F}\nabla f)dA\nonumber\\
&=&-\int_{\Sigma}\psi f^{2}trh_{F}^{2}+\psi^{2}f\div(A_{F}\nabla f)dA+\frac{1}{2}\int_{\Sigma}\psi^{2}\div(A_{F}\nabla f^{2})dA-\frac{1}{2}\int_{\partial\Sigma}\psi^{2}\langle A_{F}\nabla f^{2},\mu\rangle ds\nonumber\\
&=&-\int_{\Sigma}\psi f^{2}trh_{F}^{2}-\psi^{2}\langle A_{F}\nabla f,\nabla f\rangle dA-\int_{\partial\Sigma}\psi^{2}f\langle A_{F}\nabla f,\mu\rangle ds.\nonumber
\end{eqnarray}
Next we compute the boundary term of \eqref{stab-nonc},
\begin{eqnarray}\label{equ-nonc-bdy}
\int_{\partial\Sigma}\psi f[\langle A_{F}\nabla(\psi f),\mu\rangle-q_{F}\psi f]ds&=&\int_{\partial\Sigma}\psi f[f(\langle A_{F}\nabla\psi,\mu\rangle-q_{F}\psi)+\psi\langle A_{F}\nabla f,\mu\rangle]ds\\
&=&\int_{\partial\Sigma}\psi^{2} f\langle A_{F}\nabla f,\mu\rangle ds\nonumber
\end{eqnarray}
Inserting \eqref{equ-nonc-1}-\eqref{equ-nonc-bdy} into \eqref{stab-nonc}, we get
\begin{equation}\label{equ-nonc-3}
  \int_{\Sigma}\psi f^{2}trh_{F}^{2}dA\leq\int_{\Sigma}\psi^{2}\langle A_{F}\nabla f,\nabla f\rangle dA
\end{equation}
From \eqref{Fz-nonc} we obtain that
\begin{equation}\label{equ-nonc-4}
  \int_{\Sigma}f^{2}trh_{F}^{2}dA\leq C_{2}^{2}C_{1}^{-1}\int_{\Sigma}\langle A_{F}\nabla f,\nabla f\rangle dA \leq C_{2}^{2}C_{1}^{-1}\Lambda\int_{\Sigma}|\nabla f|^{2}dA.
\end{equation}
where $\Lambda$ is maximal eigenvalue of $A_{F}$ on $\mathbb{S}^{2}$.

Using the area growth condition \eqref{vol-growth} and choosing a standard logarithmic cutoff function (see for example \cite[Proposition 1.37]{ColdM}) for $f$ in \eqref{equ-nonc-4}, we deduce that $\tr h_{F}^{2}=0$. We conclude that $\Sigma$ is a half-plane in $\mathbb{R}^{3}_{+}$.

\end{proof}

\appendix
\section{Proof of the first and second variational formulas}\label{app}
The appendix is devoted to prove the first and second variational formulas, that is Proposition \ref{first-var} and Proposition \ref{second-var}.

Let $Y\in T(\p B)$ such that
\begin{equation}\label{Y-1}
  Y=Y^{\S}+f\nu= Y^{\p\S}+\langle Y,\mu\rangle\mu+f\nu.
\end{equation}
We have from \eqref{Nbar-0} that
\begin{equation*}
 0=\langle Y,\bar{N}\rangle=\langle\mu,\bar{N}\rangle\langle Y,\mu\rangle+\langle\nu,\bar{N}\rangle\langle Y,\nu\rangle\,\quad \text{on}\,\,\partial\Sigma,
\end{equation*}
Therefore,
\begin{equation}\label{Y-mu}
  \langle Y,\mu\rangle=-\frac{\langle\nu,\bar{N}\rangle}{\langle\mu,\bar{N}\rangle}f\, \quad \text{on}\,\,\partial \Sigma.
\end{equation}

On the other hand, from \eqref{Y-1}, \eqref{Y-mu} and \eqref{nubar-0}, we see $Y$ can be also expressed as follows
\begin{equation}\label{Y-2}
  Y=Y^{\p\S}-\frac{f}{\langle\mu,\bar{N}\rangle}(\langle\nu,\bar{N}\rangle\mu-\langle\mu,\bar{N}\rangle\nu)=Y^{\p\S}+\frac{f}{\langle\mu,\bar{N}\rangle}\bar{\nu}.
\end{equation}
It follows that,
\begin{equation}\label{Y-mubar}
  \langle Y,\bar{\nu}\rangle=\frac{1}{\langle\mu,\bar{N}\rangle}f\, \quad \text{on}\,\,\partial \Sigma.
\end{equation}

We use a prime to denote the time derivative at $t=0$ in the following.
\begin{lemma}\label{N'-and-v'}\
Let $\nabla^{\p\S}$ denote the gradient on $\partial \Sigma$. Let $S^\S$, $S^{\p B}$ denote the shape operator of $\Sigma$ in $B$ with respect to $\nu$ and $\p B$ in $B$ with respect to $\bar N$ respectively. Let $S_{1}$ and $S_{2}$ denote the shape operator of $\partial \Sigma$ in $\Sigma$ with respect to $\mu$, and of $\partial \Sigma$ in $\partial B$ with respect to $\bar{\nu}$ respectively. Then we have
\begin{eqnarray}
\nu'&=&S^\S(Y^{\S})-\nabla f,\label{nu'}\\
\mu'&=&(-h(Y^{\S},\mu)+\nabla_{\mu}f)\nu+S_{1}(Y^{\p\S})+\frac{\langle\nu,\bar{N}\rangle}{\langle\mu,\bar{N}\rangle}\,\nabla^{\p\S}f\label{mu'}
\\&&+\frac{\langle\nu,\bar{N}\rangle^{2}}{\langle\mu,\bar{N}\rangle^{2}}f\left[S^\S(\mu)-h(\mu,\mu)\mu\right]+\frac{1}{\langle\mu,\bar{N}\rangle^{2}}f\left[S^{\partial B}(\bar{\nu})-h^{\partial B}(\bar{\nu},\bar{\nu})\bar{\nu}\right],\nonumber\\
\bar{\nu}'&=&S_{2}(Y^{\p\S})-\frac{1}{\langle\mu,\bar{N}\rangle}\nabla^{\p\S}f-h^{\partial B}(Y,\bar{\nu})\bar{N}\label{barnu'}\\&&-\frac{\langle\nu,\bar{N}\rangle}{\langle\mu,\bar{N}\rangle^{2}}f\left[S^\S(\mu)-h(\mu,\mu)\mu
    +S^{\partial B}(\bar{\nu})-h^{\partial B}(\bar{\nu},\bar{\nu})\bar{\nu}\right].\nonumber
\end{eqnarray}
\end{lemma}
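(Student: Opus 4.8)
The plan is to differentiate, at $t=0$, the geometric relations characterizing each of $\nu,\bar N,\mu,\bar\nu$, and to read off the answer in the moving orthonormal frame of $N(\partial\Sigma)$. Throughout I would write $Y=\partial_t x|_{t=0}$, keep in mind that $Y|_{\partial\Sigma}\in T(\partial B)$ together with the decompositions \eqref{Y-1}, \eqref{Y-2} and the scalar identities \eqref{Y-mu}, \eqref{Y-mubar}, and abbreviate $a:=\langle\nu,\bar N\rangle$ and $b:=\langle\mu,\bar N\rangle$, so that $a^2+b^2=1$ by \eqref{Nbar-0} and $b\neq0$ by transversality.

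First I would dispose of $\nu'$ and $\bar N'$, which cost nothing. Since $\langle\nu,\nu\rangle\equiv1$ forces $\nu'\in T\Sigma$, I would pair $\nu'$ with a coordinate tangent field, differentiate $\langle\nu,x_*\partial_i\rangle\equiv0$, use that mixed partials commute (so $\partial_t(x_*\partial_i)=\partial_i Y$), and split $Y=Y^\Sigma+f\nu$; this gives the classical identity $\nu'=S^{\Sigma}(Y^{\Sigma})-\nabla f$. Likewise, since $\partial B$ does not move and $Y|_{\partial\Sigma}$ is tangent to it, the value of $\bar N$ along the moving boundary curve has derivative $\bar N'=\bar\nabla_Y\bar N=S^{\partial B}(Y|_{\partial\Sigma})$.

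The heart of the matter is $\mu'$. Because $|\mu|\equiv1$ and $\mu\perp T(\partial\Sigma_t)$ while $\nu\perp T\Sigma_t$, along $\partial\Sigma$ one has the splitting $T\mathbb{R}^{n+1}=T(\partial\Sigma)\oplus\mathbb{R}\mu\oplus\mathbb{R}\nu$, so $\mu'=\langle\mu',\nu\rangle\nu+(\mu')^{T(\partial\Sigma)}$; the $\nu$-part equals $(-\langle\mu,\nu'\rangle)\nu=(-h(Y^{\Sigma},\mu)+\nabla_\mu f)\nu$ by the formula for $\nu'$. For the tangential part I would fix $e\in T(\partial\Sigma)$, extend it via the variation to a field $\tilde e(t)$ tangent to $\partial\Sigma_t$, and differentiate $\langle\mu(t),\tilde e(t)\rangle\equiv0$ to obtain $\langle\mu',e\rangle=-\langle\mu,\bar\nabla_e Y\rangle$. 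Expanding $Y=Y^{\partial\Sigma}+\langle Y,\mu\rangle\mu+f\nu$ on $\partial\Sigma$, applying the Gauss formula of $\partial\Sigma\subset\Sigma$ (which produces the term $S_1(Y^{\partial\Sigma})$), substituting $\langle Y,\mu\rangle=-\frac{a}{b}f$, and converting the tangential derivatives $\nabla_e a$, $\nabla_e b$ by Gauss--Weingarten (one finds $\nabla_e a=b\,[h(e,\mu)+h^{\partial B}(e,\bar\nu)]$ and $\nabla_e b=-\frac{a}{b}\nabla_e a$), the relation $a^2+b^2=1$ collapses all remaining $f$-terms to exactly $\frac{a^2}{b^2}f\,h(\mu,e)+\frac{1}{b^2}f\,h^{\partial B}(\bar\nu,e)$; recognizing $h(\mu,e)=\langle S^{\Sigma}(\mu)-h(\mu,\mu)\mu,\,e\rangle$ and $h^{\partial B}(\bar\nu,e)=\langle S^{\partial B}(\bar\nu)-h^{\partial B}(\bar\nu,\bar\nu)\bar\nu,\,e\rangle$ yields \eqref{mu'}.

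Finally, $\bar\nu'$ follows by differentiating the relation $\bar\nu=-a\mu+b\nu$ from \eqref{nubar-0} and substituting the expressions for $\nu'$ and $\mu'$, together with $a'=\langle\nu',\bar N\rangle+\langle\nu,\bar N'\rangle=b\,[h(Y^{\Sigma},\mu)-\nabla_\mu f+h^{\partial B}(Y,\bar\nu)]$ and $b'=-\frac{a}{b}a'$ (again from $a^2+b^2=1$); using the identity $\langle S_2(e),e'\rangle=-a\,\langle S_1(e),e'\rangle+b\,h(e,e')$ between the two boundary shape operators, the terms regroup into \eqref{barnu'}, the $\bar N$-component correctly collapsing to $-h^{\partial B}(Y,\bar\nu)$ as it must, since $\langle\bar\nu',\bar N\rangle=-\langle\bar\nu,\bar N'\rangle$. (Symmetrically, one may instead compute $\bar\nu'$ directly in the same manner as $\mu'$ and recover $\mu'$ from it.) I expect the only genuine difficulty to be the bookkeeping in the $\mu'$ step: keeping track of which fields live only along $\partial\Sigma$ versus on all of $\Sigma$, and systematically trading the tangential derivatives of the angle functions $a$ and $b$ for second-fundamental-form data.
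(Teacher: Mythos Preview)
Your proposal is correct and follows essentially the same route as the paper: compute $\nu'$ classically, then obtain the $T(\partial\Sigma)$-components of $\mu'$ and $\bar\nu'$ via $\langle\mu',e\rangle=-\langle\mu,\bar\nabla_eY\rangle$ (and analogously for $\bar\nu$) after expanding $Y$ with the boundary decompositions \eqref{Y-1}--\eqref{Y-2}, the key step being your identity $\nabla_e a=b[h(e,\mu)+h^{\partial B}(e,\bar\nu)]$, which is exactly the paper's computation \eqref{xeq1}. The only cosmetic difference is that for $\bar\nu'$ the paper computes directly from $Y=Y^{\partial\Sigma}+\frac{f}{\langle\mu,\bar N\rangle}\bar\nu$ rather than differentiating $\bar\nu=-a\mu+b\nu$; this is the alternative you yourself mention and involves slightly less bookkeeping.
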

\begin{proof}
Let $\{e_{i}\}_{i=1}^{n}$ be an orthonormal basis of $T_{p}\Sigma$ for some $p\in \Sigma$ and denote $e_{i}(t)=(x(t,\cdot))_{\ast}(e_{i})$. Using the fact $\langle e_{i}(t),\nu(t)\rangle=0$ and $[e_{i}(t), Y(t)]=0$, we have
\begin{alignat}{2}
\nu'&=\sum_{i=1}^{n}\langle \nu', e_{i}\rangle e_{i}=-\sum_{i=1}^{n}\langle\nu, e_{i}'\rangle e_{i}\nonumber\\
&=-\sum_{i=1}^{n}\langle\nu, \bar{\nabla}_{e_{i}}Y\rangle e_{i}=-\sum_{i=1}^{n}\langle\nu,\bar{\nabla}_{e_{i}}(Y^{\S}+f\nu) \rangle e_{i}\nonumber\\
&=\sum_{i=1}^{n}\langle S^\S(Y^{\S}), e_{i}\rangle e_{i}-\sum_{i=1}^{n}df(e_{i})e_{i}\nonumber\\
&=S^\S(Y^{\S})-\nabla f.\nonumber
\end{alignat}
This is \eqref{nu'}.
As a consequence of \eqref{nu'} we get
\begin{equation}\label{nu-n}
  \langle\mu',\nu\rangle=-\langle\mu,\nu'\rangle=-h(Y^{\S}, \mu)+\nabla_{\mu}f.
\end{equation}
Let now $\{\tau_\a\}_{\a=1}^{n-1}$ be an orthonormal basis of $T_p(\partial \Sigma)$ and denote $\tau_\a(t)=(x(t,\cdot))_{\ast}(\tau_\a)$. Using \eqref{Y-1}, \eqref{Y-mu} and the fact $[\tau_\a(t), Y(t)]=0$, we have

\begin{eqnarray}\label{nu-n2}
\langle\mu', \tau_\a\rangle&=&-\langle\mu,\tau'_{\a}\rangle=-\langle\mu,\bar \n_{\tau_\a}Y\rangle=-\left\langle\mu, \bar \n_{\tau_\a}\left(Y^{\p\S}-\frac{\langle\nu,\bar{N}\rangle}{\langle\mu,\bar{N}\rangle}f\mu+f\nu\right)\right\rangle\\
&=&-\langle\mu, \bar \n_{\tau_\a}Y^{\p\S}\rangle+d\left(\frac{\langle\nu,\bar{N}\rangle}{\langle\mu,\bar{N}\rangle}f\right)(\tau_\a)-f\langle\mu, \bar \n_{\tau_\a}\nu\rangle\nonumber\\
&=&-\langle\mu, \bar \n_{\tau_\a}Y^{\p\S}\rangle+\left(\langle\mu,\bar{N}\rangle d\langle\nu,\bar{N}\rangle(\tau_\a)-\langle\nu,\bar{N}\rangle d\langle\mu,\bar{N}\rangle(\tau_\a)\right)\frac{f}{\langle\mu,\bar{N}\rangle^{2}}\nonumber\\
&&+\frac{\langle\nu,\bar{N}\rangle}{\langle\mu,\bar{N}\rangle}df(\tau_\a)-f h(\mu,\tau_\a).\nonumber
\end{eqnarray}
From \eqref{nubar-0}, we see
\begin{eqnarray}\label{xeq1}
&&\langle\mu,\bar{N}\rangle d\langle\nu,\bar{N}\rangle(\tau_\a)-\langle\nu,\bar{N}\rangle d\langle\mu,\bar{N}\rangle(\tau_\a)
\\&=&\langle\mu,\bar{N}\rangle \langle\bar \n_{\tau_\a}\nu,\bar{N}\rangle -\langle\nu,\bar{N}\rangle \langle\bar \n_{\tau_\a}\mu,\bar{N}\rangle +d\bar N(\bar \nu, \tau_\a)\nonumber
\\&=&\langle\mu,\bar{N}\rangle^2 \langle\bar \n_{\tau_\a}\nu,\mu\rangle-\langle\nu,\bar{N}\rangle^2 \langle\bar \n_{\tau_\a}\mu,\nu\rangle+ h^{\p B}(\tau_\a, \bar \nu)\nonumber\\&=& h(\mu,\tau_\a)+h^{\partial B}(\bar{\nu},\tau_\a).\nonumber
\end{eqnarray}
Replacing \eqref{xeq1} in \eqref{nu-n2}, we get
\begin{eqnarray}\label{nu-n2'}
&&\langle\mu', \tau_\a\rangle
=-\langle\mu, \bar \n_{\tau_\a}Y^{\p\S}\rangle+\frac{\langle\nu,\bar{N}\rangle}{\langle\mu,\bar{N}\rangle}df(\tau_\a)+\frac{\langle\nu,\bar{N}\rangle^{2}}{\langle\mu,\bar{N}\rangle^{2}}h(\mu,\tau_\a)f+\frac{1}{\langle\mu,\bar{N}\rangle^{2}} h^{\partial B}(\bar{\nu},\tau_\a)f.
\end{eqnarray}
It follows from \eqref{nu-n} and \eqref{nu-n2'} that
\begin{alignat}{2}
\mu'&=\langle\mu',\mu\rangle\mu+\langle\mu', \nu\rangle\nu+\sum_{\a=1}^{n-1}\langle\mu', \tau_\a\rangle \tau_\a\label{second-term}\\
&=(-h(Y^{\S}, \mu)+\nabla_{\mu}f)\nu+S_{1}(Y^{\p\S})+\frac{\langle\nu,\bar{N}\rangle}{\langle\mu,\bar{N}\rangle}\,\nabla^{\p\S}f\nonumber\\
&\quad+\frac{\langle\nu,\bar{N}\rangle^{2}}{\langle\mu,\bar{N}\rangle^{2}}f\left[S^\S(\mu)-h(\mu,\mu)\mu\right]+\frac{1}{\langle\mu,\bar{N}\rangle^{2}}f\left[S^{\partial B}(\bar{\nu})-h^{\partial B}(\bar{\nu},\bar{\nu})\bar{\nu}\right].\nonumber
\end{alignat}
This is  \eqref{mu'}.

Lastly, using $[\tau_\a(t), Y(t)]=0$ again and (\ref{Y-2}), we have
\begin{alignat}{2}
\langle\bar{\nu}',\tau_\a\rangle&=-\langle\bar{\nu},\tau'_{\a}\rangle=-\langle\bar{\nu},\bar \n_{\tau_\a}Y\rangle\label{third-term}\\
&=-\langle\bar{\nu},\bar \n_{\tau_\a}Y^{\p\S}\rangle-d\left(\frac{f}{\langle\mu,\bar{N}\rangle}\right)(\tau_\a)\nonumber\\
&=\langle S_{2}(Y^{\p\S}),\tau_\a\rangle-\frac{1}{\langle\mu,\bar{N}\rangle}df(\tau_\a)-\frac{\langle\nu,\bar{N}\rangle}{\langle\mu,\bar{N}\rangle^{2}}f\left[h(\mu,\tau_\a)+h^{\partial B}(\bar{\nu},\tau_\a)\right].\nonumber
\end{alignat}
Here the last equality we used \eqref{mu-0} and \eqref{Nbar-0}.

Now \eqref{barnu'}  follows from (\ref{third-term}) and the fact $\langle\bar{\nu}',\bar{N}\rangle=-h^{\partial B}(Y,\bar{\nu}).$
\end{proof}

\begin{lemma} Along $\partial \Sigma$, we have
\begin{eqnarray}\label{N-and-n}
&&S_{1}(Y^{\p\S},Y^{\p\S})+\frac{\langle\nu,\bar{N}\rangle}{\langle\mu,\bar{N}\rangle}\<\nabla f,Y^{\p\S}\>+\frac{\langle\nu,\bar{N}\rangle^{2}}{\langle\mu,\bar{N}\rangle^{2}}f\left[h(Y^{\p\S},\mu)+h^{\partial B}(Y^{\p\S},\bar{\nu})\right]\\&=&-\langle\nu,\bar{N}\rangle\langle Y,\bar{\nu}'\rangle+\langle\mu,\bar{N}\rangle h^{\partial B}(Y^{\p\S},Y^{\p\S})\nonumber
\end{eqnarray}
\end{lemma}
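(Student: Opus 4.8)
The plan is to compute $\langle Y, \bar\nu'\rangle$ directly using the expression for $\bar\nu'$ obtained in the previous lemma (equation \eqref{barnu'}), and then simplify the right-hand side using the orthogonality relations among $\{\nu,\mu\}$ and $\{\bar\nu,\bar N\}$ on $N(\p\Sigma)$ together with the decomposition $Y = Y^{\p\S} + \langle Y,\mu\rangle\mu + f\nu$ from \eqref{Y-1}. First I would pair $\bar\nu'$ against $Y$. Since $Y = Y^{\p\S} - \frac{\langle\nu,\bar N\rangle}{\langle\mu,\bar N\rangle}f\,\mu + f\nu$ (using \eqref{Y-mu}), and since $S_2(Y^{\p\S})\in T(\p\Sigma)$, $\nabla^{\p\S}f\in T(\p\Sigma)$, while the remaining terms in \eqref{barnu'} lie along $\mu$, $\nu$, and $\bar N$, I would record which components of $Y$ see which pieces of $\bar\nu'$: the tangential part $Y^{\p\S}$ pairs with $S_2(Y^{\p\S})$ and $-\frac{1}{\langle\mu,\bar N\rangle}\nabla^{\p\S}f$, giving $S_2(Y^{\p\S},Y^{\p\S}) - \frac{1}{\langle\mu,\bar N\rangle}\langle\nabla^{\p\S}f, Y^{\p\S}\rangle$; and the normal parts $-\frac{\langle\nu,\bar N\rangle}{\langle\mu,\bar N\rangle}f\mu + f\nu$ pair with the bracket term $-\frac{\langle\nu,\bar N\rangle}{\langle\mu,\bar N\rangle^2}f\left[S^\S(\mu) - h(\mu,\mu)\mu + S^{\partial B}(\bar\nu) - h^{\partial B}(\bar\nu,\bar\nu)\bar\nu\right]$ and the $\bar N$-term $-h^{\partial B}(Y,\bar\nu)\bar N$.

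The next step is to evaluate the individual inner products. I expect to use $\langle S^\S(\mu), \mu\rangle = h(\mu,\mu)$ so that $\langle S^\S(\mu) - h(\mu,\mu)\mu, \mu\rangle = 0$, and $\langle S^\S(\mu) - h(\mu,\mu)\mu, \nu\rangle = 0$ since $S^\S(\mu)\in T\Sigma\perp\nu$; similarly $\langle S^{\partial B}(\bar\nu) - h^{\partial B}(\bar\nu,\bar\nu)\bar\nu, \mu\rangle$ and $\langle\,\cdot\,,\nu\rangle$ need to be rewritten via \eqref{mu-0} and \eqref{nu-0} in terms of $\bar\nu$ and $\bar N$, using that $S^{\partial B}(\bar\nu)\in T(\p B)\perp\bar N$. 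Likewise $\langle\mu,\bar N\rangle$-weighted relations like $h(\mu,\tau_\a) + h^{\partial B}(\bar\nu,\tau_\a) = \langle\mu,\bar N\rangle\,d\langle\nu,\bar N\rangle(\tau_\a) - \langle\nu,\bar N\rangle\,d\langle\mu,\bar N\rangle(\tau_\a)$ from \eqref{xeq1} should be invoked to convert the $h(Y^{\p\S},\mu) + h^{\partial B}(Y^{\p\S},\bar\nu)$ combination on the left-hand side of the claimed identity. The term $-h^{\partial B}(Y,\bar\nu)\langle Y,\bar N\rangle$ vanishes because $\langle Y,\bar N\rangle = 0$ (as $Y\in T(\p B)$), which is exactly why this $\bar N$-component of $\bar\nu'$ contributes nothing. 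After gathering terms and moving the $S_1$-term to the left (noting $S_1 = $ shape operator of $\p\Sigma$ in $\Sigma$ with respect to $\mu$, so $S_1(Y^{\p\S},Y^{\p\S})$ appears), and the $h^{\partial B}(Y^{\p\S},Y^{\p\S})$-term to the right, I expect to arrive exactly at \eqref{N-and-n}.

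The main obstacle will be bookkeeping the many sign conventions: specifically, keeping straight the orientation conventions relating $S^\S$, $S_1$, $h$ (and their $\p B$ analogues), and correctly handling the mixed expansions $\mu = -\langle\nu,\bar N\rangle\bar\nu + \langle\mu,\bar N\rangle\bar N$ and $\nu = \langle\mu,\bar N\rangle\bar\nu + \langle\nu,\bar N\rangle\bar N$ when projecting the bracketed term of \eqref{barnu'} onto $Y$'s components. A secondary subtlety is that \eqref{xeq1} was derived for the basis vectors $\tau_\a$; since $Y^{\p\S}$ is a general tangent field along $\p\Sigma$, I would first note the identity \eqref{xeq1} holds with $\tau_\a$ replaced by any $e\in T(\p\Sigma)$ by linearity, then apply it with $e = Y^{\p\S}$. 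Once the algebra is organized the identity should fall out in a few lines; there is no deep idea, only careful tracking of the decompositions already established in Lemma \ref{N'-and-v'}.
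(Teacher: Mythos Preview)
Your approach is correct and follows the same core idea as the paper: compute $\langle Y,\bar\nu'\rangle$ from the formula for $\bar\nu'$ in Lemma~\ref{N'-and-v'} and rearrange. However, the paper's execution is considerably shorter because it uses the decomposition \eqref{Y-2}, namely $Y=Y^{\p\S}+\frac{f}{\langle\mu,\bar N\rangle}\bar\nu$, rather than your $(\mu,\nu)$-decomposition from \eqref{Y-1}. Since $\bar\nu'\perp\bar\nu$, only the $Y^{\p\S}$ component contributes, and all of your planned checks that the bracket term and the $\bar N$-term pair to zero against $\mu$ and $\nu$ become unnecessary. The entire computation then reduces to pairing $Y^{\p\S}$ with \eqref{barnu'}, followed by the single identity
\[
-\langle\nu,\bar N\rangle\,S_2(Y^{\p\S},Y^{\p\S})
=\langle\nu,\bar N\rangle\langle\bar\nabla_{Y^{\p\S}}Y^{\p\S},\bar\nu\rangle
=\langle\bar\nabla_{Y^{\p\S}}Y^{\p\S},-\mu+\langle\mu,\bar N\rangle\bar N\rangle
=S_1(Y^{\p\S},Y^{\p\S})-\langle\mu,\bar N\rangle\,h^{\p B}(Y^{\p\S},Y^{\p\S}),
\]
obtained from \eqref{mu-0}. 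This is precisely the $S_2\to S_1$ conversion you allude to but do not make explicit; it is the only substantive step. Your invocation of \eqref{xeq1} is a red herring---that identity plays no role here, since the combination $h(Y^{\p\S},\mu)+h^{\p B}(Y^{\p\S},\bar\nu)$ already appears directly when pairing $Y^{\p\S}$ with the bracket term of \eqref{barnu'} and needs no further rewriting.
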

\begin{proof}
From  \eqref{barnu'}  and  \eqref{Y-2}, we have
\begin{alignat}{2}
&-\langle\nu,\bar{N}\rangle\langle Y,\bar{\nu}'\rangle\\
&=\langle\nu,\bar{N}\rangle\langle \bar{\nabla}_{Y^{\p\S}}Y^{\p\S},\bar{\nu}\rangle+\frac{\langle\nu,\bar{N}\rangle}{\langle\mu,\bar{N}\rangle}\langle Y^{\p\S},\nabla^{\p\S}f\rangle+\frac{\langle\nu,\bar{N}\rangle^{2}}{\langle\mu,\bar{N}\rangle^{2}}f\left[h(Y^{\p\S},\mu)+h^{\partial B}(Y^{\p\S},\bar{\nu})\right]\nonumber
\end{alignat}
Using \eqref{mu-0}, we see
\begin{alignat}{2}
\langle\nu,\bar{N}\rangle\langle \bar{\nabla}_{Y^{\p\S}}Y^{\p\S},\bar{\nu}\rangle
&=\langle \bar{\nabla}_{Y^{\p\S}}Y^{\p\S},-\mu+\langle\mu,\bar{N}\rangle \bar{N}\rangle
&=S_{1}(Y^{\p\S},Y^{\p\S})-\langle\mu,\bar{N}\rangle h^{\partial B}(Y^{\p\S},Y^{\p\S}).\nonumber
\end{alignat}
The assertion follows.
\end{proof}

\begin{lemma}\label{N-and-n-2} Along $\partial \Sigma$, we have
\begin{equation}\label{xeq6}
h_{F}(Y^{\S},\mu)+\frac{1}{\langle\mu,\bar{N}\rangle}h^{\partial B}(Y,(\nu_F)^{\p\S})+\frac{\langle\mu_{F},\bar{N}\rangle}{\langle\mu,\bar{N}\rangle}h^{\partial B}(Y,\bar{\nu})=q_{F}f.
\end{equation}
\end{lemma}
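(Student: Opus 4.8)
\textbf{Proof proposal for Lemma \ref{xeq6}.}
The plan is to expand everything in terms of the anisotropic normal $\nu_F = DF(\nu) + F(\nu)\nu$ and reduce the identity to the two simpler ingredients already available: the relation $\mu_F = F(\nu)\mu - \langle\nu_F,\mu\rangle\nu$ together with \eqref{xeq2}--\eqref{xeq3}, and the ordinary-geometry identity \eqref{N-and-n} from the preceding lemma. First I would rewrite the anisotropic second fundamental form via $h_F(X,Y)=\langle\bar\nabla_X\nu_F,Y\rangle$, so that $h_F(Y^{\S},\mu) = \langle\bar\nabla_{Y^{\S}}\nu_F,\mu\rangle$, and then use the splitting $\nu_F = (\nu_F)^{\S} + F(\nu)\nu = (\nu_F)^{\p\S} + \langle\nu_F,\mu\rangle\mu + F(\nu)\nu$ to distribute $\bar\nabla_{Y^{\S}}$ across the three pieces. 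The point is that $(\nu_F)^{\S} = DF(\nu)$ is a tangential field whose covariant derivative we can control, and $\bar\nabla_{Y^{\S}}\nu$ brings in the ordinary shape operator $S^{\S}$, hence the ordinary $h$.

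The key computational step is to decompose $Y$ according to \eqref{Y-1} and \eqref{Y-2}, i.e. $Y = Y^{\p\S} + \langle Y,\mu\rangle\mu + f\nu$ along $\p\S$ with $\langle Y,\mu\rangle = -\frac{\langle\nu,\bar N\rangle}{\langle\mu,\bar N\rangle}f$, and to do the same for $\nu_F$. Plugging these in, the left-hand side of \eqref{xeq6} becomes a sum of terms, each being either a component of the ordinary second fundamental form $h$ of $\Sigma$, a component of $h^{\p B}$, a derivative $\nabla f$ or $\nabla^{\p\S}f$ term, or a term proportional to $f$ times $h$ or $h^{\p B}$. I expect that after collecting, the $\nabla^{\p\S}f$ and $Y^{\p\S}$-curvature contributions match precisely the left side of \eqref{N-and-n} (up to the factor $F(\nu)$ coming out of $\nu_F$), so that substituting \eqref{N-and-n} eliminates all the ``interior-boundary geometry of $\p\S$'' and leaves only terms that, via \eqref{xeq2}--\eqref{xeq3} and the anisotropic capillary condition \eqref{angle}, reassemble into $q_F f$ with $q_F$ as in \eqref{qF}. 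It is here that one uses $\langle\mu_F,\bar\nu\rangle = -\langle\nu_F,\bar N\rangle = -\omega_0$ and $\langle\mu_F,\bar N\rangle = \langle\nu_F,\bar\nu\rangle$ to convert between the $\bar\nu$-components and $\bar N$-components that appear.

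The main obstacle will be the bookkeeping: keeping track of which normal basis ($\{\mu,\nu\}$ versus $\{\bar\nu,\bar N\}$) each term is expressed in, and making the two expansions (of $Y$ and of $\nu_F$) compatible so that the cross terms cancel in the right pattern. In particular, the term $\frac{1}{\langle\mu,\bar N\rangle}h^{\p B}(Y,(\nu_F)^{\p\S})$ and the term $\frac{\langle\mu_F,\bar N\rangle}{\langle\mu,\bar N\rangle}h^{\p B}(Y,\bar\nu)$ must be seen to arise from the $h^{\p B}$-contributions in \eqref{N-and-n} after one writes $(\nu_F)^{\p\S}$ and $\mu_F$ out in the $\{\bar\nu,\bar N\}$ frame; getting the coefficient $\frac{1}{\langle\mu,\bar N\rangle^2}$ in $q_F$ right requires careful use of \eqref{mu-0}--\eqref{Nbar-0}. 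Once the matching is organized correctly, the rest is a routine, if lengthy, algebraic identity, and no further geometric input beyond \eqref{N-and-n}, Proposition \ref{lemma1}-type relations (not needed here since $\p B$ is general), and the capillary condition is required.
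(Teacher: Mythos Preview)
Your plan to invoke \eqref{N-and-n} is misguided, and this is where the proposal breaks down. The identity \eqref{N-and-n} is quadratic in $Y^{\p\S}$ and involves the variation derivative $\bar\nu'$; the target identity \eqref{xeq6} is linear in $Y$ and purely pointwise, with no time derivatives present. Expanding $h_F(Y^\S,\mu)$ cannot produce terms like $S_1(Y^{\p\S},Y^{\p\S})$ or $\langle\nabla^{\p\S}f,Y^{\p\S}\rangle$: the left side of \eqref{xeq6} does not contain $\nabla f$ at all (the only way $f$ enters is through $\langle Y,\mu\rangle = -\frac{\langle\nu,\bar N\rangle}{\langle\mu,\bar N\rangle}f$), so the substitution step you describe cannot be carried out.

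The paper's proof is far more direct and rests on a single trick you do not mention: differentiate the capillary condition $\langle\nu_F,\bar N\rangle = \omega_0$ along $Y^{\p\S}$. Since $\bar\nabla_X\nu_F \in T\Sigma$ for any $X\in T\Sigma$ (by \eqref{weingerten}), and $\bar N = \langle\mu,\bar N\rangle\mu + \langle\nu,\bar N\rangle\nu$ by \eqref{Nbar-0}, one gets immediately
\[
\langle\mu,\bar N\rangle\, h_F(Y^{\p\S},\mu) \;=\; \langle\bar\nabla_{Y^{\p\S}}\nu_F,\bar N\rangle \;=\; -\langle\nu_F,\bar\nabla_{Y^{\p\S}}\bar N\rangle,
\]
which expresses $h_F(Y^{\p\S},\mu)$ directly as a combination of $h^{\p B}(Y^{\p\S},\bar\nu)$ and $h^{\p B}(Y^{\p\S},(\nu_F)^{\p\S})$ after decomposing $\nu_F$ in the frame $\{\bar\nu,\bar N,T(\p\S)\}$ and using \eqref{xeq3}. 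One then splits $h_F(Y^\S,\mu) = h_F(Y^{\p\S},\mu) + \langle Y,\mu\rangle h_F(\mu,\mu)$, applies \eqref{Y-mu} and $Y - Y^{\p\S} = \frac{f}{\langle\mu,\bar N\rangle}\bar\nu$ from \eqref{Y-2}, and reads off $q_F f$ from \eqref{qF}. No splitting of $\nu_F$ into $T(\p\S)\oplus\mathbb{R}\mu\oplus\mathbb{R}\nu$ components, and no appeal to \eqref{N-and-n}, is needed.
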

\begin{proof}

Using the capillary condition $\langle\nu_{F},\bar{N}\rangle=\omega_{0}$,  \eqref{Nbar-0} and the fact that $\langle\bar{\nabla}_{Y^{\p\S}}\nu_{F}, \nu\rangle=0$, we calculate that
\begin{alignat}{2}
\langle\mu,\bar{N}\rangle h_{F}(Y^{\p\S},\mu)&=\langle\mu,\bar{N}\rangle\langle\bar{\nabla}_{Y^{\p\S}}\nu_{F},\mu\rangle=\langle\bar{\nabla}_{Y^{\p\S}}\nu_{F},\bar{N}\rangle=-\langle\nu_{F},\bar{\nabla}_{Y^{\p\S}}\bar{N}\rangle\label{qf-9}\\
&=-\langle\nu_{F},\bar{\nu}\rangle h^{\partial B}(Y^{\p\S},\bar{\nu})-h^{\partial B}(Y^{\p\S},(\nu_F)^{\p\S})\nonumber\\
&=-\langle\mu_{F},\bar{N}\rangle h^{\partial B}(Y^{\p\S},\bar{\nu})-h^{\partial B}(Y^{\p\S},(\nu_F)^{\p\S}).\nonumber
\end{alignat}
Here in the last equality we used \eqref{xeq3}.

From \eqref{Y-1}, \eqref{Y-mu}, \eqref{Y-2} and \eqref{qf-9}, we obtain that
\begin{alignat}{2}
&h_{F}(Y^{\S},\mu)+\frac{1}{\langle\mu,\bar{N}\rangle}h^{\partial B}(Y,(\nu_F)^{\p\S})+\frac{\langle\mu_{F},\bar{N}\rangle}{\langle\mu,\bar{N}\rangle}h^{\partial B}(Y,\bar{\nu})\\
&=h_{F}(Y^{\p\S},\mu)+\langle Y,\mu\rangle h_{F}(\mu,\mu)+\frac{1}{\langle\mu,\bar{N}\rangle}h^{\partial B}(Y,(\nu_F)^{\p\S})+\frac{\langle\mu_{F},\bar{N}\rangle}{\langle\mu,\bar{N}\rangle}h^{\partial B}(Y,\bar{\nu})\nonumber\\
&=-\frac{\langle\nu,\bar{N}\rangle}{\langle\mu,\bar{N}\rangle}fh_{F}(\mu,\mu)
+\frac{1}{\langle\mu,\bar{N}\rangle}\left(h^{\partial B}(Y-Y^{\p\S},(\nu_F)^{\p\S})+\langle\mu_{F},\bar{N}\rangle h^{\partial B}(Y-Y^{\p\S},\bar{\nu})\right)\nonumber\\
&=-\frac{\langle\nu,\bar{N}\rangle}{\langle\mu,\bar{N}\rangle}fh_{F}(\mu,\mu)
+\frac{1}{\langle\mu,\bar{N}\rangle^{2}}\left(h^{\partial B}(\bar \nu,(\nu_F)^{\p\S})+\langle\mu_{F},\bar{N}\rangle h^{\partial B}(\bar{\nu},\bar{\nu})\right)f\nonumber\\
&=q_{F}f.\nonumber
\end{alignat}

\end{proof}

\noindent{\bf Proof of Proposition \ref{first-var}.}\,\,For an admissible variation $Y$, we calculate the first variation of anisotropic energy $\mathcal{E}_{F}$ as follows
\begin{alignat}{2}
\mathcal{E}_{F}'(0)&=\mathcal{A}_{F}'(0)+\omega_{0} \mathcal{A}_{W}'(0)=\int_{\Sigma}\frac{\p}{\p t}\Big|_{t=0}F(\nu)dA+\int_{\Sigma}F(\nu)\frac{\p}{\p t}\Big|_{t=0}dA_{t}+\omega_{0}\int_{\partial\Sigma}\langle\bar{\nu},Y\rangle ds\nonumber\\
&=\int_{\Sigma}\langle \bar \n F(\nu),\nu'\rangle+F(\nu)\div_{\Sigma}YdA+\omega_{0}\int_{\partial\Sigma}\langle \bar{\nu},Y\rangle ds\nonumber\\
&=\int_{\Sigma}\langle DF(\nu),-\nabla f+S^\S(Y^{\S})\rangle+F(\nu)(\div_{\Sigma}Y^{\S}+fH)dA+\omega_{0}\int_{\partial\Sigma}\langle \bar{\nu},Y\rangle ds\nonumber\\
&=\int_{\Sigma}(\div_{\Sigma}DF+F(\nu)H)fdA+\int_{\partial\Sigma}F(\nu)\langle Y,\mu\rangle-f\langle DF,\mu\rangle ds+\omega_{0}\int_{\partial\Sigma}\langle\bar{\nu},Y\rangle ds\nonumber\\
&=\int_{\Sigma}H_{F}fdA+\int_{\partial\Sigma}\langle F(\nu)\mu-\langle DF,\mu\rangle\nu+\omega_{0}\bar{\nu},Y\rangle ds\nonumber\\
&=\int_{\Sigma}H_{F}fdA+\int_{\partial\Sigma}\langle \mu_{F}+\omega_{0}\bar{\nu},Y\rangle ds.\nonumber
\end{alignat}
\qed


\noindent{\bf Proof of Proposition \ref{second-var}.}
Let $x:\S\to B$ be an anisotropic capillary CAMC (anisotropic capillary minimal, respectively) immersion, that is, $H_{F}=const$ ($H_{F}=0$, respectively) in $\Sigma$ and $\langle\nu_{F},\bar{N}\rangle=\omega_{0}$ on $\partial\Sigma$ and $x(t,\cdot)$ be a volume-preserving (compactly supported, respectively)  admissible variation. It is direct to see that the first variational formula is true for any $t\in (-\ep,\ep)$,  that is,
\begin{alignat}{2}
\mathcal{E}_{F}'(t)&=\int_{\Sigma_t}H_{F}fdA+\int_{\partial\Sigma_t}\langle \mu_{F}+\omega_{0}\bar{\nu},Y\rangle ds.\nonumber
\end{alignat}
Thus we have
\begin{alignat*}{2}
\mathcal{E}_{F}''(0)&=\int_{\Sigma}H_{F}'fdA+H_{F}\left(\int_{\Sigma}f dA\right)'\\
&\quad+\int_{\partial \Sigma}\langle Y', \mu_{F}+\omega_{0}\bar{\nu}\rangle+\langle Y, \mu_{F}'+\omega_{0}\bar{\nu}'\rangle ds+\int_{\partial\Sigma}\langle Y,\mu_{F}+\omega_{0}\bar{\nu}\rangle\frac{\p}{\p t}\Big|_{t=0}ds_{t}.
\end{alignat*}
Observe that in the case of CAMC with volume preserving variation, we have
\begin{equation}\label{volume-p}
\left(\int_{\Sigma}f dA\right)'=\mathcal{V}''(0)=0,
\end{equation}
and in the case of anisotropic minimal,  $H_F=0$. Hence in both cases, the term $H_{F}\left(\int_{\Sigma}f dA\right)'=0$.

Also, $\langle Y,\mu_{F}+\omega_{0}\bar{\nu}\rangle=0$ along $\p\S$ since $Y|_{\p\S}\in T(\partial B)$. Moreover, we have the evolution equation  (see \cite{KP})
\begin{equation*}
  H_{F}'=-(\div_{\Sigma}(A_{F}\nabla f)+\langle A_{F}\circ d\nu,d\nu\rangle f).
\end{equation*}
It follows that
\begin{alignat}{2}\label{variation-formula}
\mathcal{E}_{F}''(0)=-\int_{\Sigma}(\div_{\Sigma}(A_{F}\nabla f)+\langle A_{F}\circ d\nu,d\nu\rangle f) fdA+\int_{\partial \Sigma}\langle Y', \mu_{F}+\omega_{0}\bar{\nu}\rangle+\langle Y, \mu_{F}'+\omega_{0}\bar{\nu}'\rangle ds.
\end{alignat}
So to prove the formula for $\mathcal{E}_{F}''(0)$ we only need to compute the boundary term
\begin{equation}\label{boundary-term}
\int_{\partial \Sigma}\langle Y', \mu_{F}+\omega_{0}\bar{\nu}\rangle ds+\int_{\partial \Sigma}\langle Y, \mu_{F}'+\omega_{0}\bar{\nu}'\rangle ds.
\end{equation}
We now calculate the first term of \eqref{boundary-term}. Since $\mu_{F}+\omega_{0}\bar{\nu}$ is parallel to $\bar{N}$ along $\partial\Sigma$, we have
\begin{alignat}{2}
\langle Y',\mu_{F}+\omega_{0}\bar{\nu}\rangle&=\langle \bar{\nabla}_{Y}Y,\mu_{F}+\omega_{0}\bar{\nu}\rangle=-\langle\mu_{F},\bar{N}\rangle h^{\partial B}(Y,Y). \label{Y'}
\end{alignat}

Next we calculate the second term of \eqref{boundary-term}.
According to the definition of $\mu_{F}$, by using \eqref{euc-sph-der1} and \eqref{euc-sph-der2}, we see that
\begin{alignat}{2}\label{muFdi}
\mu_{F}'&=(F(\nu)\mu-\langle \nu_F,\mu\rangle\nu)'\\
&=F(\nu)'\mu+F(\nu)\mu'-\langle \bar{\nabla}F(\nu),\mu\rangle'\nu-\langle D F(\nu),\mu\rangle\nu'\nonumber\\
&=\langle \bar{\nabla} F(\nu),\nu'\rangle\mu+F(\nu)\mu'-\left[\bar{\nabla}^{2}F(\nu)(\nu',\mu)+\langle \bar{\nabla}F(\nu),\mu'\rangle \right]\nu-\langle DF(\nu),\mu\rangle\nu'\nonumber\\
&=\langle DF(\nu),\nu'\rangle\mu+F(\nu)\mu'-\left[(D^{2}F(\nu)+F(\nu){\rm Id})(\nu',\mu)+\langle DF(\nu)+F(\nu)\nu,\mu'\rangle\right]\nu-\langle DF(\nu),\mu\rangle\nu'\nonumber\\
&=\langle DF(\nu),\nu'\rangle\mu+F(\nu)\mu'-\left[D^{2}F(\nu)(\nu',\mu)+\langle DF(\nu),\mu'\rangle \right]\nu-\langle DF(\nu),\mu\rangle\nu'.\nonumber
\end{alignat}
We remind here that $\bar{\nabla} F$ is the Euclidean covariant derivative on the one-homogenous extension of $F$, and  $DF$ is the covariant derivative of $F$ with respect to $\ss^n$.

It follows that
\begin{alignat}{2}\label{muFadd}
\langle Y,\mu_{F}'+\omega_{0}\bar{\nu}'\rangle&=-D^{2}F(\nu)(\nu',\mu) f+F(\nu)\langle Y,\mu'\rangle+\omega_{0}\langle Y,\bar{\nu}'\rangle
\nonumber\\
&\quad+\langle DF(\nu),\nu'\rangle\langle Y,\mu\rangle-\langle DF(\nu),\mu'\rangle f-\langle DF(\nu),\mu\rangle\langle Y,\nu'\rangle\nonumber\\
&:=\uppercase\expandafter{\romannumeral1}+\uppercase\expandafter{\romannumeral2}+\uppercase\expandafter{\romannumeral3}+\uppercase\expandafter{\romannumeral4}+\uppercase\expandafter{\romannumeral5}+\uppercase\expandafter{\romannumeral6}.
\end{alignat}
We now tackle the above terms one by one. Using \eqref{nu'}, we have
\begin{alignat}{2}\label{muFadd-5}
\uppercase\expandafter{\romannumeral1}&=-D^{2}F(\nu)(\nu',\mu) f\\
&=-\langle(D^{2}F(\nu)+F(\nu){\rm Id})\mu,\nu'\rangle f+F(\nu)\langle\mu,\nu'\rangle f\nonumber\\
&=-\langle A_{F}(\nu)\cdot\mu,-\nabla f+S^\S(Y^{\S})\rangle f+F(\nu)\langle\mu,\nu'\rangle f\nonumber\\
&=\langle A_{F}(\nu)\nabla f,\mu\rangle f-h_{F}(Y^{\S},\mu)f-F(\nu)\langle\mu',\nu\rangle f.\nonumber
\end{alignat}

Utilizing \eqref{mu'}, \eqref{N-and-n} and \eqref{Y-2}, we get
\begin{alignat}{2}\label{muFadd-3}
\langle Y,\mu'\rangle
&=\langle\mu',\nu\rangle f+S_{1}(Y^{\p\S},Y^{\p\S})+\frac{\langle\nu,\bar{N}\rangle}{\langle\mu,\bar{N}\rangle}\<\nabla f, Y^{\p\S}\>\\
&\quad+\frac{\langle\nu,\bar{N}\rangle^{2}}{\langle\mu,\bar{N}\rangle^{2}}fh(Y^{\p\S},\mu)+\left(1+\frac{\langle\nu,\bar{N}\rangle^{2}}{\langle\mu,\bar{N}\rangle^{2}}\right)h^{\partial B}(Y^{\p\S},\bar{\nu})f\nonumber\\
&=\langle\mu',\nu\rangle f-\langle\nu,\bar{N}\rangle\langle Y,\bar{\nu}'\rangle+\langle\mu,\bar{N}\rangle h^{\partial B}(Y^{\p\S},Y^{\p\S})+h^{\partial B}(Y^{\p\S},\bar{\nu})f\nonumber\\
&=\langle\mu',\nu\rangle f-\langle\nu,\bar{N}\rangle\langle Y,\bar{\nu}'\rangle+\langle\mu,\bar{N}\rangle h^{\partial B}(Y^{\p\S},Y).\nonumber
\end{alignat}
Note that $\bar{\nu}=\frac{1}{\langle\mu,\bar{N}\rangle}\nu-\frac{\langle\nu,\bar{N}\rangle}{\langle\mu,\bar{N}\rangle}\bar{N}$. It follows that
\begin{eqnarray}\label{xeq4}
S_{2}(Y^{\p\S},Y^{\p\S})=\frac{1}{\langle\mu,\bar{N}\rangle}h(Y^{\p\S},Y^{\p\S})-\frac{\langle\nu,\bar{N}\rangle}{\langle\mu,\bar{N}\rangle}h^{\partial B}(Y^{\p\S},Y^{\p\S}).
\end{eqnarray}
Applying \eqref{barnu'}, \eqref{xeq4} and \eqref{Y-2}, we have
\begin{alignat}{2}\label{muFadd-39}
\langle Y,\bar{\nu}'\rangle
&=S_{2}(Y^{\p\S},Y^{\p\S})-\frac{1}{\langle\mu,\bar{N}\rangle}\<\nabla f, Y^{\p\S}\>-\frac{\langle\nu,\bar{N}\rangle}{\langle\mu,\bar{N}\rangle^{2}}f\left[h(Y^{\p\S},\mu)+h^{\partial B}(Y^{\p\S},\bar{\nu})\right]\\
&=\frac{1}{\langle\mu,\bar{N}\rangle}h(Y^{\p\S},Y^{\p\S})-\frac{1}{\langle\mu,\bar{N}\rangle}\<\nabla f, Y^{\p\S}\>\nonumber\\&\quad-\frac{\langle\nu,\bar{N}\rangle}{\langle\mu,\bar{N}\rangle^{2}}fh(Y^{\p\S},\mu)
-\frac{\langle\nu,\bar{N}\rangle}{\langle\mu,\bar{N}\rangle}h^{\partial B}(Y^{\p\S},Y).\nonumber
\end{alignat}
Combining \eqref{muFadd-3} with \eqref{muFadd-39}, by using the capillary condition \eqref{angle}, we obtain that
\begin{alignat}{2}\label{muFadd-4}
\uppercase\expandafter{\romannumeral2}+\uppercase\expandafter{\romannumeral3}&=F(\nu)\langle Y,\mu'\rangle+\omega_{0}\langle Y,\bar{\nu}'\rangle\\
&=F(\nu)\langle\mu',\nu\rangle f+(-F(\nu)\langle\nu,\bar{N}\rangle+\omega_{0})\langle Y,\bar{\nu}'\rangle+F(\nu)\langle\mu,\bar{N}\rangle h^{\partial B}(Y^{\p\S},Y)\nonumber\\
&=F(\nu)\langle\mu',\nu\rangle f+\langle\mu,\bar{N}\rangle\langle \nu_F,\mu\rangle\langle Y,\bar{\nu}'\rangle+F(\nu)\langle\mu,\bar{N}\rangle h^{\partial B}(Y^{\p\S},Y)\nonumber\\
&=F(\nu)\langle\mu',\nu\rangle f
+\langle\mu_{F},\bar{N}\rangle h^{\partial B}(Y^{\p\S},Y)\nonumber\\
&\quad+\langle \nu_F,\mu\rangle\left(h(Y^{\p\S},Y^{\p\S})-\<\nabla f, Y^{\p\S}\>-\frac{\langle\nu,\bar{N}\rangle}{\langle\mu,\bar{N}\rangle}fh(Y^{\p\S},\mu)\right).\nonumber
\end{alignat}

Now we claim that
\begin{eqnarray}\label{456}
&& \uppercase\expandafter{\romannumeral4}+\uppercase\expandafter{\romannumeral5}+\uppercase\expandafter{\romannumeral6}
 \\&=&\langle DF(\nu),\nu'\rangle\langle Y,\mu\rangle-\langle DF(\nu),\mu'\rangle f-\langle DF(\nu),\mu\rangle\langle Y,\nu'\rangle\nonumber
 \\&=&-\langle \nu_F,\mu\rangle\left(h(Y^{\p\S},Y^{\p\S})-\<\nabla f, Y^{\p\S}\>-\frac{\langle\nu,\bar{N}\rangle}{\langle\mu,\bar{N}\rangle}fh(Y^{\p\S},\mu)\right)-\frac{f}{\langle\mu,\bar{N}\rangle}h^{\partial B}(Y,(\nu_F)^{\p\S}),\nonumber
\end{eqnarray}

In fact, from \eqref{nu'}, \eqref{Y-mu} and \eqref{Y-1} we have
\begin{alignat}{2}\label{muFadd-99}
\uppercase\expandafter{\romannumeral4}&=\langle DF(\nu),\nu'\rangle\langle Y,\mu\rangle=-\frac{\langle\nu,\bar{N}\rangle}{\langle\mu,\bar{N}\rangle}f\langle DF(\nu),S^\S(Y^{\S})-\nabla f\rangle\\
&=-\frac{\langle\nu,\bar{N}\rangle}{\langle\mu,\bar{N}\rangle}f\left(\langle DF(\nu), \sum_\a h(Y^{\S},\tau_\a)\tau_\a-\nabla^{\p\S}f\rangle+\langle DF(\nu),\mu\rangle(h(Y^{\S},\mu)-\nabla_{\mu}f)\right),\nonumber
\end{alignat}
and
\begin{alignat}{2}\label{muFadd-0005}
\uppercase\expandafter{\romannumeral6}&=-\langle DF(\nu),\mu\rangle \langle Y,\nu'\rangle=-\langle DF(\nu),\mu\rangle \langle Y^{\S},-\nabla f+S^\S(Y^{\S})\rangle\\
&=-\langle DF(\nu),\mu\rangle\left(-\<\nabla f, Y^{\p\S}\>+\frac{\langle\nu,\bar{N}\rangle}{\langle\mu,\bar{N}\rangle}f\nabla_{\mu}f+h(Y^{\p\S},Y^{\p\S})-\frac{\langle\nu,\bar{N}\rangle}{\langle\mu,\bar{N}\rangle}fh(Y^{\p\S},\mu)-\frac{\langle\nu,\bar{N}\rangle}{\langle\mu,\bar{N}\rangle}fh(Y^{\S},\mu)\right).\nonumber
\end{alignat}
Since $\mu=\frac{1}{\langle\mu,\bar{N}\rangle}\bar{N}-\frac{\langle\nu,\bar{N}\rangle}{\langle\mu,\bar{N}\rangle}\nu$, we see
\begin{eqnarray}\label{xeq5}
S_{1}(Y^{\p\S})=\frac{1}{\langle\mu,\bar{N}\rangle}\sum_\a h^{\partial B}(Y^{\p\S},\tau_\a)\tau_\a-\frac{\langle\nu,\bar{N}\rangle}{\langle\mu,\bar{N}\rangle}\sum_\a h(Y^{\p\S},\tau_\a)\tau_\a.
\end{eqnarray}
Using \eqref{mu'}, \eqref{xeq5} and \eqref{Y-2}, we deduce
\begin{alignat}{2}\label{muFadd-000}
\uppercase\expandafter{\romannumeral5}&=-\langle DF(\nu),\mu'\rangle f\\
&=-\left\langle DF(\nu),\frac{1}{\langle\mu,\bar{N}\rangle}\sum_\a h^{\partial B}(Y^{\p\S},\tau_\a)\tau_\a-\frac{\langle\nu,\bar{N}\rangle}{\langle\mu,\bar{N}\rangle}\sum_\a h(Y^{\p\S},\tau_\a)\tau_\a+\frac{\langle\nu,\bar{N}\rangle}{\langle\mu,\bar{N}\rangle}\nabla^{\p\S} f\right\rangle f\nonumber\\
&\qquad-\left\langle DF(\nu),\frac{\langle\nu,\bar{N}\rangle^{2}}{\langle\mu,\bar{N}\rangle^{2}}f \sum_\a h(\mu,\tau_\a)\tau_\a+\frac{f}{\langle\mu,\bar{N}\rangle^{2}}\sum_\a h^{\partial B}(\bar{\nu},\tau_\a)\tau_\a\right\rangle f\nonumber\\
&=-\left\langle DF(\nu),\frac{\langle\nu,\bar{N}\rangle}{\langle\mu,\bar{N}\rangle}\nabla^{\p\S} f-\frac{\langle\nu,\bar{N}\rangle}{\langle\mu,\bar{N}\rangle}\sum_\a h(Y^{\S},\tau_\a)\tau_\a+\frac{1}{\langle\mu,\bar{N}\rangle}\sum_\a h^{\partial B}(Y,\tau_\a)\tau_\a\right\rangle f.\nonumber
\end{alignat}
Now the above claim \eqref{456} follows by combining \eqref{muFadd-99}, \eqref{muFadd-0005} and \eqref{muFadd-000}.

Putting $\uppercase\expandafter{\romannumeral1}$-$\uppercase\expandafter{\romannumeral6}$ into \eqref{muFadd}, we get
\begin{alignat}{2}\label{muFadd-Yf}
&\langle Y,\mu_{F}'+\omega_{0}\bar{\nu}'\rangle\\
&=f\left(\langle A_{F}\nabla f,\mu\rangle-h_{F}(Y^{\S},\mu)-\frac{1}{\langle\mu,\bar{N}\rangle}h^{\partial B}(Y,(\nu_F)^{\p\S})\right)+\langle\mu_{F},\bar{N}\rangle h^{\partial B}(Y^{\p\S},Y).\nonumber
\end{alignat}
Combining \eqref{muFadd-Yf} and \eqref{Y'}, using \eqref{Y-2} and \eqref{xeq6}, we have
\begin{alignat}{2}\label{muFadd-1oi}
&\langle Y,\mu_{F}'+\omega_{0}\bar{\nu}'\rangle+\langle Y',\mu_{F}+\omega_{0}\bar{\nu}\rangle\\
&=f\left(\langle A_{F}\nabla f,\mu\rangle-h_{F}(Y^{\S},\mu)-\frac{1}{\langle\mu,\bar{N}\rangle}h^{\partial B}(Y,(\nu_F)^{\p\S})-\frac{\langle\mu_{F},\bar{N}\rangle}{\langle\mu,\bar{N}\rangle}h^{\partial B}(Y,\bar{\nu})\right)\nonumber\\
&=f\left(\langle A_{F}\nabla f,\mu\rangle-q_{F}f\right).\nonumber
\end{alignat}
The proof is completed by \eqref{variation-formula} and \eqref{muFadd-1oi}. \qed

\

{\bf Acknowledgements.} The authors would like to thank Professor Haizhong Li and Professor Guofang Wang for their constant support and their interest on this topic. The first author would also like to thank Dr. Han Hong for interesting discussion.

\

\


\begin{thebibliography}{99}
	
\bibitem{AS}A. Ainouz and R. Souam, {\it Stable capillary hypersurfaces in a half-space or a slab,} Indiana Univ. Math. J., 65(3): 813--831, 2016.
\bibitem{Alm}F. J. Almgren, {\it Existence and regularity almost everywhere of solutions to elliptic variational problems among surfaces of varying topological type and singularity structure}, Ann. Math., 87(2): 321-391, 1968.
\bibitem{Bae} E. Baer, {\it Minimizers of anisotropic surface tensions under gravity: higher dimensions via symmetrization}, Arch. Ration. Mech. Anal., 215(2): 531-578, 2015.


\bibitem{Ba} E. Barbosa,   {\it On CMC free boundary stable hypersurfaces in a Euclidean ball,} Math. Ann., 372(1): 179-187, 2018.

\bibitem{BdC} J. L. Barbosa and M. do Carmo, {\it Stability of hypersurfaces with constant mean curvature,} Math. Z., 185(3): 339-353, 1984.







\bibitem{CMR}G. Catino, P. Mastrolia and A. Roncoroni, {\it Two rigidity results for stable minimal hypersurfaces,}	Geometric and Functional Analysis, 1-18, 2024.


\bibitem{CM}U. Clarenz and H. Mosel, {\it On surfaces of prescribed F-mean curvature,} Pacific J. Math., 213(1): 15-36, 2004.
\bibitem{CK}J. Choe and M. Koiso, {\it Stable capillary  hypersurfaces in a wedge,} Pacific J. Math., 280(1): 1-15, 2015.
\bibitem{CL}O. Chodosh and C. Li, {\it Stable minimal hypersurfaces in $\mathbb{R}^{4}$}, to appear in Acta Math., arXiv: 2108.11462, 2021
\bibitem{CL2}O. Chodosh and C. Li, {\it Stable anisotropic minimal hypersurfaces in $\mathbb{R}^{4}$}, Forum Math. Pi, (Vol. 11, p. e3), 2023.


\bibitem{ColdM}T. H. Colding and W. P. Minicozzi, {\it A course in minimal surfaces,} American Mathematical Soc., Vol. 121, 2011.
\bibitem{DP}M. do Carmo and C. K. Peng, {\it Stable complete minimal surfaces in $\mathbb{R}^{3}$ are planes,} Bull. Amer. Math., 6(1), 1979.
\bibitem{DG} E. De Giorgi, {\it Frontiere orientate di misura minima,} (Italian) Seminario di Matematica della Scuola Normale Superiore di Pisa, 1960-61 Editrice Tecnico Scientifica, Pisa, 1961.


\bibitem{DPM} G. De Philippis and F. Maggi, {\it Regularity of free boundaries in anisotropic capillarity problems and the validity of Young's law},  Arch. Ration. Mech. Anal., 216(2): 473-568, 2015.

 \bibitem{DPM2} G. De Philippis and F. Maggi, {\it Dimensional estimates for singular sets in geometric variational problems with free boundaries},  J. Reine Angew. Math., 725: 217-234, 2017.
\bibitem{DD} L. De Masi and  G. De Philippis, {\it  Min-max construction of minimal surfaces with a fixed angle at the boundary,} arXiv preprint arXiv:2111.09913, 2021.

\bibitem{FS} D. Fischer-Colbrie and R. Schoen, {\it The structure of complete stable minimal surfaces in 3-manifolds of nonnegative scalar curvature,} Comm. Pure Appl. Math., 33(2): 199-211, 1980.
\bibitem{Finn} R. Finn, {\it Equilibrium Capillary Surfaces,} Springer, New York, 1986.

\bibitem{FM} I. Fonseca and S. M\"uller, {\it A uniqueness proof for the Wulff theorem}, Proc. Roy. Soc. Edinburgh Sect. A, 119(1-2): 125-136, 1991.


\bibitem{GM}J. Ge and H. Ma, {\it Anisotropic isoparametric hypersurfaces in euclidean spaces,} Ann. Glob. Anal. Geom., 41(3): 347-355, 2012.
\bibitem{Gon}Eduardo H. A. Gonzalez, {\it Sul problema della goccia appoggiata}, Rend. Semin. Mat. Univ. Padova., 55: 289-302, 1976.
\bibitem{GWX3} J. Guo, G.Wang and C. Xia, {\it Stable capillary hypersurfaces supported on a horosphere in the hyperbolic space,} Adv. Math., 409: 108641, 2022.
\bibitem{HongS}H. Hong and A. Saturnino, {\it Capillary surfaces: stability, index and curvature estimates,} J. Reine Angew. Math., 2023(803): 233-265, 2023.


\bibitem{HL1} Y. He and H. Li, {\it Stability of hypersurfaces with constant $(r+1)$-th anisotropic mean curvature,} Illinois J. Math., 52(4): 1301-1314, 2008.
\bibitem{HLMG} Y. He, H. Li, H. Ma and J. Ge, {\it Compact embedded hypersurfaces with constant higher order anisotropic mean curvatures,} Indiana U. Math. J., 58(2): 853-868, 2009.
\bibitem{JWXZ} X. Jia, G. Wang, C. Xia and X. Zhang, {\it Alexandrov's theorem for anisotropic capillary hypersurfaces in the half-space,} Arch. Ration. Mech. An., 247(2): 25, 2023.


\bibitem{Koiso1} M. Koiso, {\it Stable anisotropic capillary hypersurfaces in a wedge,} Math. Eng., 5(2): 1-22, 2023.
\bibitem{Koiso19} M. Koiso, {\it Uniqueness of closed equilibrium hypersurfaces for anisotropic surface energy and application to a capillary problem,} Math. Comput. Appl., 24(4): 88, 2019.
\bibitem{KP00} M. Koiso and B. Palmer, {\it Stable surfaces with constant anisotropic mean curvature and circular boundary,} Proc. Amer. Math. Soc., 141(11): 3817-3823, 2013.
\bibitem{KP4} M. Koiso and B. Palmer, {\it Anisotropic umbilic points and Hopf's theorem for surfaces with constant anisotropic mean curvature,} Indiana Univ. Math. J., 59(1): 79-90, 2010.
\bibitem{KP2} M. Koiso and B. Palmer, {\it Anisotropic capillary surfaces with wetting energy,} Calc. Var. and PDEs., 29(3): 295-345, 2007.
\bibitem{KP1} M. Koiso and B. Palmer, {\it Stability of anisotropic capillary surfaces between two parallel planes,} Calc. Var. and PDEs., 25(3): 275-298, 2006.
\bibitem{KP}M. Koiso and B. Palmer, {\it Geometry and stability of surfaces with constant anisotropic mean curvature,} Indiana Univ. Math. J., 54(6): 1817-1852, 2005.
\bibitem{LZZ}C. Li, X. Zhou and J. J.Zhu {\it Min-max theory for capillary surfaces,} arXiv preprint arXiv:2111.09924, 2021.
\bibitem{LiXiong2} H. Li  and C. Xiong,  {\it Stability of capillary hypersurfaces with planar boundaries,} J. Geom. Anal., 27(1): 79-94, 2017.
\bibitem{Lin} F. H. Lin, {\it Estimates for surfaces which are stationary for an elliptic parametric integral,} J. Partial Differential Equations, 3(3):78-92, 1990.
\bibitem{MX} H. Ma and  C. Xiong, {\it Hypersurfaces with constant anisotropic mean curvatures,} J. Math. Sci. Univ. Tokyo, 20(3): 335-347, 2013.


\bibitem{M1} P. Marinov, {\it Stability analysis of capillary surfaces with planar or spherical boundary in the absence of gravity,}  PhD thesis, University of Toledo, 2010.




\bibitem{Nu} I. Nunes,  {\it On stable constant mean curvature surfaces with free boundary,} Math. Z., 287(1-2): 473-479, 2017.
\bibitem{Pa} B. Palmer, {\it Stability of the Wulff shape,} Proc. Amer. Math. Soc., 126(12): 3661-3667, 1998.
\bibitem{Po} Aleksei V. Pogorelov, {\it On the stability of minimal surfaces,} Soviet Math. Dokl., 24:274-276, 1981.
\bibitem{RV} A. Ros  and E. Vergasta, {\it Stability for hypersurfaces of constant mean curvature with free boundary,}  Geometriae Dedicata, 56(1): 19-33, 1995.

\bibitem{RS}A. Ros  and R. Souam,  {\it On stability of capillary surfaces in a ball,} Pacific J. Math., 178(2): 345-361, 1997.


\bibitem{SSY} R. Schoen, L. Simon and S. T. Yau, {\it Curvature estimates for minimal hypersurfaces,} Acta Math., 134(3-4): 275-288, 1975.
\bibitem{Simon}L. Simon, {\it On some extensions of Bernstein's theorem,} Math. Z., 154(3): 265-273, 1977.
\bibitem{Souam2} R. Souam, {\it Stable constant mean curvature surfaces with free boundary in slabs,} J. Geom. Anal., 31(1): 282-297, 2021.
\bibitem{Souam3} R. Souam, {\it On stable capillary hypersurfaces with planar boundaries,}  J. Geom. Anal., 33(6): 196, 2023.


\bibitem{Ta} Jean E. Taylor, {\it Boundary regularity for solutions to various capillarity and free boundary problems}, Comm. Partial Differential Equations., 2(4): 323-357, 1977.

\bibitem{Ta2}Jean E. Taylor. {\it Crystalline variational problems}, Bull. Am. Math. Soc., 84: 568-588, 1978.

\bibitem{WX}G. Wang and C. Xia, {\it Uniqueness of stable capillary hypersurfaces in a ball,} { Math. Ann.}, 374(3): 1845-1882, 2019.
\bibitem{White}B. White, {\it Existence of smooth embedded surfaces of prescribed genus that minimize parametric even elliptic functionals on 3-manifolds,} J. Differential Geom., 33(2): 413-443, 1991.
\bibitem{Wink} S. Winklmann, {\it A note on the stability of the Wulff shape,} Arch. Math., 87(3): 272-279, 2006.
\bibitem{Wink1} S. Winklmann, {\it Pointwise curvature estimates for $F$-stable hypersurfaces,} Ann. Inst. H. Poincar\'e
C Anal. Non Lin\'eaire, 22(5): 543-555, 2005.
\bibitem{Wint} W.L. Winterbottom, {\it Equilibrium shape of a small particle in contact with a foreign substrate,} Acta Metal., 15(2): 303-310, 1967.
\bibitem{XZ} C. Xia and X. Zhang, {Uniqueness for volume-constraint local energy-minimizing sets in a half-space or a ball}, Adv. Calc. Var., 2023.
\end{thebibliography}
\end{document}